\numberwithin{equation}{section}
\newtheorem{theorem}{Theorem}[section]
\newtheorem{corollary}[theorem]{Corollary}
\newtheorem{lemma}[theorem]{Lemma}
\newtheorem{proposition}[theorem]{Proposition}
\newtheorem{definition}[theorem]{Definition}
\newtheorem{example}[theorem]{Example}
\newtheorem{remark}[theorem]{Remark}
\begin{document}
% %----------------------------------------------------------------------
%%% ----------------------------------------------------------------------
%%% ----------------------------------------------------------------------

\title[  K\"{a}hler-Norden structures on Hom-Lie group and Hom-Lie algebras]
 { K\"{a}hler-Norden structures on\\ Hom-Lie groups and Hom-Lie algebras}

%%% ----------------------------------------------------------------------
%%% ----------------------------------------------------------------------
%%% ----------------------------------------------------------------------
\bibliographystyle{amsplain}
%%% ----------------------------------------------------------------------
%%% ----------------------------------------------------------------------
%%% ----------------------------------------------------------------------

 \author[E.  Peyghan, L. Nourmohammadifar and A.  Makhlouf]{ E. Peyghan, L. Nourmohammadifar, A. Makhlouf and A. Gezer}
 \address{E.P. and L.N. : Department of Mathematics, Faculty of Science, Arak University,
 	Arak, 38156-8-8349, Iran.\\
 	A.M. Universit\'e de Haute Alsace, IRIMAS-d\'epartement de math\'ematiques, Mulhouse, France.\\
 	A. G. Ataturk University, Faculty of Science, Department of Mathematics,
 	25240, Erzurum-Turkey.}
 \email{e-peyghan@araku.ac.ir,\ l.nourmohammadifar@gmail.com, abdenacer.makhlouf@uha.fr, agezer@atauni.edu.ir}

 %\author[M. Y. Sadeghi]{M. Y. Sadeghi$^{3}$}
%\address{$^{3}$ Department of Mathematics, Payame Noor University, Tehran,
 %19395-3697, Iran.}
%\email{m.sadeghi@phd.pnu.ac.ir}
%\thanks{ This research of the second author was in part supported by a grant from IPM (No. 93130022).}

\keywords{ }

\subjclass[2010]{17A30, 17D25, 53C15, 53D05.}

%%% ----------------------------------------------------------------------
%%% ----------------------------------------------------------------------
%%% ----------------------------------------------------------------------

\begin{abstract}
 In the present paper, we describe  two geometric notions, holomorphic Norden structures and  K\"{a}hler-Norden structures on Hom-Lie groups,
and prove that on Hom-Lie groups in the left invariant setting, these structures are related to each other.  We study K\"{a}hler-Norden structures
with abelian complex structures and give the curvature properties
of  holomorphic Norden structures on Hom-Lie groups. Finally, we show that any left-invariant holomorphic Hom-Lie group is a flat (holomorphic Norden Hom-Lie algebra carries  a Hom-Left-symmetric algebra)  if its left-invariant complex structure (complex structure) is abelian. 
\end{abstract}

%%% ----------------------------------------------------------------------
%%% ----------------------------------------------------------------------
%%% ----------------------------------------------------------------------
\maketitle
%%% ----------------------------------------------------------------------
%%% ----------------------------------------------------------------------
%%% ----------------------------------------------------------------------

%%%%%%%%%%%%%%%%%%%%%%%%%%%%%%%%%%%%%%%%%%%%%%%%%%%%%%%%%%%%%%%%%%%%%%%%%%%%%%%%%%%%%%

%%%%%%%%%%%%%%%%%%%%%%%%%%%%%%%%%%%%%%%%%%%%%%%%%%%%%%%%%%%%%%%%%%%%%%%%%%%%%%%%%%%%%%

%%% ----------------------------------------------------------------------
%%% ----------------------------------------------------------------------
%%% ----------------------------------------------------------------------
%%% ----------------------------------------------------------------------
%%% ----------------------------------------------------------------------

%\newpage

%%%%%%%%%%%%%%%%%%%%%%%%%%%%%%%%%%%%%%%%%%%%%%%%%%%%%%%%%%%%

%%%%%%%%%%%%%%%%%%%%%%%%%%%%%%%%%%%%%5
%%%%%%%%%%%%%%%%%%%%%%%%%%%%%%%%%%%%%

\section{Introduction}

An almost complex structure on a $2n$-dimensional manifold $M$ is a field $J$ satisfying $J^2=-Id$.
A K\"{a}hler-Norden  manifold can be considered as
a triple $(M,g,J)$ which consists of a smooth manifold $M$ endowed with a Riemannian metric $g$ and  an almost complex structure $J$ such that $\nabla J = 0$ where $\nabla$ is the Levi-Civita connection of $g$ and the metric $g$ is assumed to be Anti-Hermitian(Norden), i.e., $g(JX,JY ) =- g(X,Y )$ for all vector fields $X$ and $Y$ on $M$. 
%\cite{GT}.
This kind of manifolds have been also studied
under the names: K\"{a}hlerian manifolds with Norden (or B) metrics,  anti-K\"{a}hler
manifolds (see \cite{GT, GM,SK}). The structure group of  K\"{a}hler-Norden manifolds is the complex
orthogonal group $O(n,\mathbb{C})$ and these manifolds have become an important area of research recently, mainly because of their applications in theoretical physics. For example 
 Andrzej Borowiec, Mauro Francaviglia, Marco Ferraris and Igor Volovich  proved that there is a one-to-one correspondence between (Einstein) holomorphic-Riemannian manifolds and (Einstein) anti-K\"{a}hlerian manifolds \cite{BFF1, BFF2}.
 The curvature properties of anti-K¨ahler manifolds have been studied in \cite{IS, O, S1}.
 Other strong motivation to study anti-K\"{a}hler manifolds comes from  
 the spinor geometry and geometric analysis on anti-K\"{a}hler manifolds and the generalized geometry of anti-Hermitian manifolds as complex Lie algebroids over anti-K\"{a}hler manifolds \cite{DN, DN1, N1}. 
When the manifold is a Lie group $G$, the metric and the complex structure are considered left-invariant, where they are both determined by their restrictions to the Lie algebra $\mathfrak{g}$ of $G$. In this situation, $(\mathfrak{g}, g_e,J_e)$ is called a  anti-K\"{a}hler Lie algebra. 
Anti-K\"{a}hler geometry on Lie groups
 have been studied by Edison Alberto  Fern\'{a}ndez-Culma and Yamile Godoy \cite{FG}.

The notion of Hom-Lie algebras was introduced by Hartwig, Larsson and Silvestrov in the study of   $\sigma$-deformations of the Witt and Virasoro algebras in 
\cite{HLS}. Indeed, some $q$-deformations of the
Witt and the Virasoro algebras have the structure of a Hom-Lie algebra \cite{HLS, H}. Based on the close relation between the discrete, deformed vector fields and differential calculus, this algebraic structure plays an important role in research fields \cite{HLS, LS, MS,MS1,MS2, PN}.
Moreover,  Recently, many scholars are very interested in the geometric problems over Hom-Lie algebras, such as infinite-dimensional Hom-Lie algebras, the
classical Hom-Yang-Baxter equation \cite{SB}, para-K\"{a}hler and  complex and K\"{a}hler structures on hom-Lie algebras \cite{PN, PN3},  complex product structures on Hom-Lie algebras and Hom-left symmetric algebroids \cite{PN4, PNA} and  Hom-Novikov algebras was introduced and studied by Yau in \cite{Y}.

%Hom-Lie algebras were widely studied. In particular, Hom-structures on
%semisimple Lie algebras were studied in [8, 20]; Geometrization of Hom-Lie algebras were studied in
%[3, 10]; Bialgebras for Hom-algebras were studied in [2, 16, 21, 22]; Recently, Elchinger, Lundengard,
%Makhlouf and Silvestrov extend the result in [7] to the case of (σ, τ)-derivations ([6]).

The concept of  Hom-groups was introduced as a non-associative analogue of a group in \cite{LM1}. 
 Then  in \cite{JMS}, J. Jiang, S. Mishra, and Y. Sheng  modified
the axioms in the  definition of Hom-group which is different from the one in \cite{LM1}.
 New definition leads to a Hom-Hopf algebra structure on the universal enveloping algebra of a Hom-Lie algebra and a Hom-group is associated  to any Hom-Lie algebra by considering group-like elements in its universal enveloping algebra.
 %The representations and a (co)homology theory of Hom-groups and Lagrange’s theorem for finite Hom-groups are  studied in \cite{H1, H2}.
 %The recent developments on Hom-groups make it natural to study Hom-Lie groups and to explore the relationship between Hom-Lie groups and Hom-Lie algebras. In \cite{JMS} 
 Also,  they  introduced the notion of a Hom-Lie group. In fact, a Hom-Lie group is considered as a Hom-group $(G,\diamond,e_\Phi,\Phi)$, where the underlying set $G$ is a (real) smooth manifold, the Hom-group operations (such as the product and the inverse) are smooth maps, and the underlying structure map $\Phi : G \rightarrow G$ is a diffeomorphism. Also a Hom-Lie algebra is associated to a Hom-Lie group by considering the notion of left-invariant sections of the pullback bundle $\Phi^!TG$. 

This work is intended as an attempt to study  two geometric notions, holomorphic Norden structures and  K\"{a}hler-Norden structures on Hom-Lie groups,
 to motivate on Hom-Lie groups in the left invariant setting, these structures are related to each other.  We study K\"{a}hler-Norden structures
 with abelian complex structures and give the curvature properties
 of  holomorphic Norden structures on Hom-Lie groups.

The paper is organized as follows.  In Section 2, we recall notions of 
Hom-algebra,  Hom-Lie algebra, Hom-group and Hom-Lie group.
%  and pseudo-Riemannian Hom-Lie algebras.
In Section 3,
we study  complex and Norden structures,  pseudo-Riemannian metric  and connections on Hom-Lie groups and Hom-Lie algebras.
% we introduce complex and Norden structures on Hom-Lie algebras.
 Also, we give examples of these structures.
   Then  we present   the theory of Tachibana operators  for Hom-Lie group and Hom-Lie algebras  and then 
give the notion of a holomorphic tensor on them.  In Section 4,  we introduce left-invariant K\"{a}hler-Norden structures on Hom-Lie groups and K\"{a}hler-Norden structures on Hom-Lie algebras and present examples of these structures.
We also  study  K\"{a}hler-Norden structures on Hom-Lie group and Hom-Lie algebras that their
 complex structure is abelian. Moreover, we  describe  Twin Norden metric of  Hom-Lie groups and Hom-Lie algebras. In  Section 5,
 we give the definition of a left-invariant holomorphic Norden Hom-Lie group (a holomorphic Norden Hom-Lie algebra).
 We show  that there exist a one-to-one correspondence between left invariant K\"{a}hler-Norden Hom-Lie groups (K\"{a}hler-Norden Hom-Lie algebras) and left-invariant holomorphic Norden Hom-Lie groups ( holomorphic Norden Hom-Lie algebras). 
  Finally, 
 we present some properties of  the Riemannian curvature tensor  of
 a left-invariant holomorphic Norden Hom-Lie group ( a holomorphic Norden Hom-Lie algebra) using the generalization of Tachibana operators, in  Section 6. Also, we show that any left-invariant holomorphic Hom-Lie group is a flat (holomorphic Norden Hom-Lie algebra carries  a Hom-Left-symmetric algebra )  if its left-invariant complex structure (complex structure) is abelian.

%\newpage
In this paper, we work over  the real field $\mathbb{R}$ and the complex field $\mathbb{C}$. In general case, it is denoted  by  $\mathbb{K}$.
%%%%%%%%%%%%%%%%%%%%%%%%%%%%%%%%%%%%%%%%%%%%%%%%%%%%%%%%%%%%
%\section{Preliminaries}

% % % % % % % % % % % % % % % % % % % % % % % % % % % % % % % % % % % % % % % % % % % % % % 
\section{Basic concepts on Hom-Lie groups}
% % % % % % % % % % % % % % % % % % % % % % % % % % % % % % % % % % % % % % % % % % % % % %

\begin{definition}\cite{PN}
A Hom-algebra is a triple $(V,\cdot , \phi_{V})$  consisting of a linear space $V$, a bilinear map (product)
$\cdot : V \times V \rightarrow V$ and an algebra morphism $\phi_{V} : V \rightarrow V$.
\end{definition}
\begin{definition}\cite{MS}
A Hom-Lie algebra is a triple $(\mathfrak{g}, [\cdot  , \cdot ]_\mathfrak{g},{ \phi_\mathfrak{g}})$ consisting of a linear space $\mathfrak{g}$, a
 bilinear map (bracket) $[\cdot , \cdot ]_\mathfrak{g}: \mathfrak{g}\times\mathfrak{g}\rightarrow\mathfrak{g}$ and an algebra morphism ${ \phi_\mathfrak{g}}:\mathfrak{g}\rightarrow \mathfrak{g}$ satisfying the following hom-Jacobi identity:
\begin{equation*}
[u,v]_\mathfrak{g}=-[v,u]_\mathfrak{g},\ \ \ \ \ \circlearrowleft_{u,v,w}[\phi_\mathfrak{g}(u),[v,w]_\mathfrak{g}]_\mathfrak{g}=0,
\end{equation*}
for any $u,v,w\in\mathfrak{g}$, where $\circlearrowleft$ is the symbol of cyclic sum.
 The second equation is called Hom-Jacobi identity.
\end{definition}
 The Hom-Lie algebra $(\mathfrak{g}, [\cdot  ,\cdot ]_\mathfrak{g}, { \phi_\mathfrak{g}})$ is called {\it regular Hom-Lie algebra} (respectively, {\it involutive Hom-Lie algebra}), if ${ \phi_\mathfrak{g}}$ is non-degenerate (respectively, satisfies ${ \phi_\mathfrak{g}}^2=Id_\mathfrak{g}$).
 It is known that a Lie algebra $(\mathfrak{g}, [\cdot, \cdot]_\mathfrak{g})$  with $\phi_\mathfrak{g}=Id_\mathfrak{g}$ is a Hom-Lie algebra. We call $(\mathfrak{g}, [\cdot, \cdot]_\mathfrak{g}, \phi_\mathfrak{g})$ \textit{proper Hom-Lie algebra} if $\phi_\mathfrak{g}\neq Id_\mathfrak{g}$.
 
%Also, a subspace $\mathfrak{h}\subset \mathfrak{g}$ is called a {\it Hom-Lie subalgebra} of $\mathfrak{g}$ if 
%$
%\phi_\mathfrak{g}(\mathfrak{h})\subset \mathfrak{h}$ and 
%$[u,v]\in\mathfrak{h}$, for any $ {u,v\in \mathfrak{h}}.
%$
%%%%%%%%%%%%%%%%%%%%%%%%%%%%%%%%%%%%%%%
\begin{definition}\cite{JMS}
	A  Hom-group is a set $G$ equipped with a product $\diamond:G\times G\rightarrow G$, a bijective map $\Phi: G \rightarrow G$ such that the following axioms are satisfied:\\
	(i)\ $
	\Phi(a\diamond b)=\Phi(a)\diamond\Phi(b),
$\\
	(ii)\  the product is Hom-associative, i.e.,
	$
	\Phi(a) \diamond  (b \diamond  c) = (a \diamond  b) \diamond  \Phi(c), 
$\\
	(iii) there exists a unique Hom-unit $e_\Phi\in G$ such that
	$a \diamond  e_\Phi = e_\Phi \diamond  a = \Phi(a)$, 
\\
	(iv) for each $a\in G$ there exists an element $a^{-1}$ satisfying the  condition
$	a \diamond   a^{-1} = a^{-1} \diamond   a = e_\Phi$,\\
for any $a,b,c \in G$.
	We denote a Hom-group by $(G,\diamond,e_\Phi,\Phi)$.
	\end{definition}
\begin{definition}\cite{JMS}
	A real Hom-Lie group is a Hom-group $(G,\diamond,e_\Phi,\Phi)$, in which $G$ is also a smooth real manifold, the map $\Phi:G\rightarrow G$ is a diffeomorphism, and the Hom-group operations (product and inversion) are smooth maps.
	\end{definition}
Assume  $(G,\diamond,e_\Phi,\Phi)$ is a Hom-Lie group.  The pullback map $\Phi^*:C^{\infty}(G)\rightarrow C^{\infty}(G)$ is a morphism of the function ring $C^{\infty}(G)$, i.e.,
\[
\Phi^*(fg)=\Phi^*(f)\Phi^*(g), \ \ \ \forall f,g\in  C^{\infty}(G).
\]
Let $A\rightarrow G$ be
a vector bundle of rank $n$. Denote by $\Gamma(A)$ the $C^\infty(G)$-module of sections of $A\rightarrow G$. A \textit{hom-bundle} is a triple $(A\rightarrow G, \Phi,\phi_A)$ consisting of a vector bundle $A\rightarrow G$, the smooth map $\Phi: G \rightarrow G$ and an algebra morphism $\phi_A:\Gamma(A)\rightarrow  \Gamma(A)$ satisfying
\[
\phi_A(fx)=\Phi^*(f)\phi_A(x),
\]
for any $x\in \Gamma(A)$ and $f\in C^{\infty}(G)$ (in this case, $\phi_A$ is called a $\Phi^*$-function linear).
% If $\varphi$ is a diffeomorphism and $\phi_A$ is an invertible map, then the hom-bundle $(A\rightarrow M, \varphi,\phi_A)$ is called invertible. 
Consider the tangent bundle $TG$   of the manifold $G$, we denote by $\Phi^!TG$, the pullback bundle of the tangent bundle $TG$ along the diffeomorphism $\Phi : G\rightarrow G$.
Triple $(\Phi^!TG,\Phi,Ad_{\Phi^*})$ is an example of hom-bundles, where
 $Ad_{\Phi^*}:\Gamma(\Phi^!TG)\rightarrow \Gamma(\Phi^!TG)$ is given by
\[
Ad_{\Phi^*}(x)=\Phi^*\circ x\circ(\Phi^*)^{-1},
\]
for any $x\in\Gamma(\Phi^!TG)$ \cite{CLS}. Note that  $\Gamma(\Phi^!TG)$ can be identified with  $ Der_{\Phi^*,\Phi^*}({C^\infty(G))}$, i.e.
\[
x(fg)=x(f)\Phi^*(g)+\Phi^*(f)x(g),\ \ \ \ \ \ \ \ \ \forall x\in\Gamma(\Phi^!TG), \forall f,g\in C^\infty(G).
\]
%Consider the tangent bundle $TG$   of the manifold $G$, we denote by $\Phi^!TG$, the pullback bundle of the tangent bundle $TG$ along the diffeomorphism $\Phi : G\rightarrow G$.  There is a one-to-one correspondence between the space of sections of the tangent bundle $TG$ (i.e., $\Gamma(TG)$) and the space of sections of the pullback bundle $\Phi^!TG$ (i.e., $\Gamma(\Phi^!TG)$)\cite{JMS}.
 The linear map $Ad_{\Phi^*}:\Gamma(\Phi^!TG)\rightarrow  \Gamma(\Phi^!TG)$ given in a  hom-bundle $(\Phi^!TG\rightarrow G, \Phi,Ad_{\Phi^*})$  can be extended to a linear
map from $\Gamma(\wedge^k\Phi^!TG)$ to $\Gamma(\wedge^k\Phi^!TG)$ for which we use the same notation $Ad_{\Phi^*}$ via
\[
Ad_{\Phi^*}(x)=Ad_{\Phi^*}(x_1)\wedge\ldots\wedge Ad_{\Phi^*}(x_k),\ \ \ \ \ \ \ \forall x=x_1\wedge\ldots\wedge x_k\in\Gamma(\wedge^k\Phi^!TG).
\]
If we denote the inverses of $\Phi$ and $Ad_{\Phi^*}$  by $\Phi^{-1}$ and $Ad_{(\Phi^*)^{-1}}$, respectively, it is easy to see that
\[
Ad_{(\Phi^*)^{-1}}(fX)=(\Phi^*)^{-1}(f)Ad_{(\Phi^*)^{-1}}(X),\ \ \ \ \forall f\in C^\infty(G), x\in\Gamma(\wedge^k\Phi^!TG).
\]
So $(\Phi^!TG, \Phi^{-1},Ad_{(\Phi^*)^{-1}})$ is a hom-bundle.
In the sequel, we denote by $\Phi^!T^*G$
the dual of the pullback bundle $\Phi^!TG$.
 We consider $Ad_{\Phi^*}^\dagger:\Gamma(\wedge^k\Phi^!T^*G)\rightarrow\Gamma(\wedge^k\Phi^!T^*G)  $ that is defined by 
\[
(Ad_{\Phi^*}^\dagger(\xi))(x)=\Phi^*\xi (Ad_{(\Phi^*)^{-1}}(x)),\ \ \ \ \ \forall x\in\Gamma(\wedge^k\Phi^!TG), \xi\in \Gamma(\wedge^k\Phi^!T^*G).
\]
From the above equation, we can conclude the following 
\[
Ad_{\Phi^*}^\dagger(f\xi)=\Phi^*(f)Ad_{\Phi^*}^\dagger(\xi).
\]
Therefore $(\wedge^k\Phi^!T^*G,\Phi,Ad_{\Phi^*}^\dagger)$ is a hom-bundle.
\begin{theorem}
	Let $G$ be a smooth manifold. Then $(\Gamma(\Phi^!TG),[\cdot,\cdot]_\Phi,Ad_{\Phi^*})$ is a Hom-Lie algebra, where the Hom-Lie bracket $[\cdot,\cdot]_\Phi$ and the map $\phi : \Gamma(\Phi^!TG) \rightarrow \Gamma(\Phi^!TG)$ are defined as follows:
	\begin{align*}
	\phi(x)&=Ad_{\Phi^*}(x)=\Phi^*\circ x \circ (\Phi^{-1})^* , \\
	[x,y]_\Phi&=\Phi^* \circ x \circ (\Phi^{-1})^* \circ y \circ (\Phi^{-1})^*-\Phi^*  \circ y \circ (\Phi^{-1})^* \circ x \circ (\Phi^{-1})^* ,
	\end{align*}
	for any $x,y\in \Gamma(\Phi^!TG)$.
\end{theorem}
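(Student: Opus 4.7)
The plan is to unpack the identification $\Gamma(\Phi^!TG)\cong \mathrm{Der}_{\Phi^*,\Phi^*}(C^\infty(G))$ recalled just above the statement, and then verify the three axioms of a Hom-Lie algebra by direct operator-level computations. Throughout, I would write $\psi=\Phi^*$ and $\psi^{-1}=(\Phi^{-1})^*$ to keep the formulas short, so that $\phi(x)=\psi\circ x\circ\psi^{-1}$ and $[x,y]_\Phi=\psi\circ x\circ\psi^{-1}\circ y\circ\psi^{-1}-\psi\circ y\circ\psi^{-1}\circ x\circ\psi^{-1}$.

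First I would confirm that the bracket is well-defined, i.e.\ that $[x,y]_\Phi$ is again a $(\Phi^*,\Phi^*)$-derivation when $x,y$ are. Using the fact that $\psi$ and $\psi^{-1}$ are algebra morphisms of $C^\infty(G)$ together with the derivation rule $x(fg)=x(f)\psi(g)+\psi(f)x(g)$, I would compute $(\psi\circ x\circ\psi^{-1}\circ y\circ\psi^{-1})(fg)$ step by step. The expansion produces four terms: the two ``diagonal'' ones rebuild the required derivation identity for the composed operator, and the two ``cross'' terms $\psi(y(\psi^{-1}f))\cdot\psi(x(\psi^{-1}g))+\psi(x(\psi^{-1}f))\cdot\psi(y(\psi^{-1}g))$ are manifestly symmetric in $x$ and $y$, hence cancel upon antisymmetrisation. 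This gives $[x,y]_\Phi(fg)=[x,y]_\Phi(f)\psi(g)+\psi(f)[x,y]_\Phi(g)$, so $[x,y]_\Phi\in\Gamma(\Phi^!TG)$. Bilinearity and skew-symmetry are immediate from the formula.

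Next I would check that $\phi=Ad_{\Phi^*}$ is a morphism for the bracket. Conjugating $[x,y]_\Phi$ by $\psi$ and $\psi^{-1}$ simply shifts every $\psi^{\pm 1}$ by one power, giving $\phi([x,y]_\Phi)=\psi^2\circ x\circ\psi^{-1}\circ y\circ\psi^{-2}-\psi^2\circ y\circ\psi^{-1}\circ x\circ\psi^{-2}$, which is precisely $[\phi(x),\phi(y)]_\Phi$ after substituting $\phi(x),\phi(y)$ into the defining formula.

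The main work, and the place where I expect the bookkeeping to be most delicate, is the Hom-Jacobi identity. Expanding $[\phi(x),[y,z]_\Phi]_\Phi$ twice yields four monomials of the common shape $\psi^2\circ a\circ\psi^{-1}\circ b\circ\psi^{-1}\circ c\circ\psi^{-2}$ with $(a,b,c)$ ranging over the permutations of $(x,y,z)$ and appearing with signs $+,-,-,+$ in the orders $(x,y,z),(x,z,y),(y,z,x),(z,y,x)$. Cycling $x\to y\to z\to x$ and summing, each of the six permutation patterns of $(x,y,z)$ appears exactly once with a $+$ sign and once with a $-$ sign, so the cyclic sum vanishes term by term. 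This establishes $\circlearrowleft_{x,y,z}[\phi(x),[y,z]_\Phi]_\Phi=0$ and completes the proof.
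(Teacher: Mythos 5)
Your verification is correct, and it is worth noting that the paper itself gives no argument for this statement: immediately after the theorem the authors simply remark that it is Theorem~3.6 of the cited work of Jiang, Mishra and Sheng, modulo a change of convention (there the structure map is $Ad_{(\Phi^{-1})^*}=(\Phi^{-1})^*\circ x\circ\Phi^*$ rather than $Ad_{\Phi^*}=\Phi^*\circ x\circ(\Phi^{-1})^*$). So your direct operator-level computation is a genuinely different, self-contained route, and it is the more informative one here precisely because the conventions differ from the reference and a reader cannot transcribe the cited proof verbatim. All three steps check out: writing $\psi=\Phi^*$, the expansion of $\bigl(\psi\circ x\circ\psi^{-1}\circ y\circ\psi^{-1}\bigr)(fg)$ does produce the two derivation-type terms plus the symmetric cross terms $\psi(y(\psi^{-1}f))\,\psi(x(\psi^{-1}g))+\psi(x(\psi^{-1}f))\,\psi(y(\psi^{-1}g))$, which cancel under antisymmetrisation, so the bracket lands back in $\mathrm{Der}_{\Phi^*,\Phi^*}(C^\infty(G))$; conjugation by $\psi$ indeed gives $\phi([x,y]_\Phi)=\psi^2\circ x\circ\psi^{-1}\circ y\circ\psi^{-2}-\psi^2\circ y\circ\psi^{-1}\circ x\circ\psi^{-2}=[\phi(x),\phi(y)]_\Phi$; and expanding $[\phi(x),[y,z]_\Phi]_\Phi$ gives exactly the four monomials $\psi^2\circ a\circ\psi^{-1}\circ b\circ\psi^{-1}\circ c\circ\psi^{-2}$ in the orders $(x,y,z),(x,z,y),(y,z,x),(z,y,x)$ with signs $+,-,-,+$, so that under the cyclic sum each of the six orderings occurs once with each sign and everything cancels. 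The only presentational suggestion is to state explicitly that $\psi$ and $\psi^{-1}$ are ring morphisms of $C^\infty(G)$ (which is what licenses $\psi^{-1}(fg)=\psi^{-1}(f)\psi^{-1}(g)$ in the first step), but that fact is recorded in the paper just before the theorem, so the argument is complete as written.
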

The above Theorem is Theorem 3.6 of \cite{JMS} only the difference is that
in \cite{JMS}, authors consider the following relations for $Ad_{\Phi^*}$ and $[\cdot,\cdot]_\Phi$:
\begin{align*}
	\phi(x) =& (\Phi^{-1})^*\circ x \circ  \Phi^*=Ad_{(\Phi^{-1})^*}, \\
	[x,y]_\Phi =& (\Phi^{-1})^* \circ x \circ (\Phi^{-1})^* \circ y \circ \Phi^*- (\Phi^{-1})^*\circ y \circ (\Phi^{-1})^* \circ x \circ \Phi^*.
	\end{align*}
\begin{definition}\cite{JMS}\label{JMS}
	 Let $(G,\diamond,e_\Phi,\Phi)$ be a Hom-Lie group. A smooth section $x\in \Gamma(\Phi^!TG)$ is called left-invariant if $x$ satisfies the following equation:
	 \begin{align*}
	 x_a=(l_a\circ\Phi^{-1})_{*_{e_\Phi}}(x_{e_\Phi}),
	 \end{align*}
	 or
	 \[
	 x(f)(a) = x(f \circ l_a \circ \Phi^{-1})(e_\Phi),
	 \]
	 for any $a\in G$ and $f\in C^\infty (G)$, where $l_a:G\rightarrow G$ is a smooth map given by $l_ab=a\diamond b$ for any $b\in G$.
	 
\end{definition}
\begin{theorem}\cite{JMS}
The space $\Gamma_L(\Phi^!TG)$ of left-invariant sections of the pullback bundle $\Phi^!TG$ is a Hom-Lie subalgebra of the Hom-Lie algebra $(\Gamma(\Phi^!TG), [\cdot, \cdot]_\Phi, \phi)$.
\end{theorem}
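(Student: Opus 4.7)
The plan is to verify directly that $\Gamma_L(\Phi^!TG)$ is closed under both the structure map $\phi = Ad_{\Phi^*}$ and the bracket $[\cdot,\cdot]_\Phi$; once this is done, antisymmetry and the Hom-Jacobi identity are inherited automatically from the ambient Hom-Lie algebra. Before starting, I would record two auxiliary identities about the Hom-group data. First, $\Phi(e_\Phi) = e_\Phi$, which follows from combining $a\diamond e_\Phi = \Phi(a)$ with the morphism property and the bijectivity of $\Phi$. Second, $\Phi(a\diamond b)=\Phi(a)\diamond\Phi(b)$ yields the intertwining identity $\Phi\circ l_a = l_{\Phi(a)}\circ\Phi$, equivalently $\Phi^{-1}\circ l_b = l_{\Phi^{-1}(b)}\circ\Phi^{-1}$.

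For closure under $\phi$, given $x\in\Gamma_L(\Phi^!TG)$, I would unwind $\phi(x)=\Phi^*\circ x\circ(\Phi^{-1})^*$ to obtain
\[
(\phi(x))(f)(a) = x(f\circ\Phi^{-1})(\Phi(a)),
\]
then apply the left-invariance of $x$ at the point $\Phi(a)$ together with the intertwining identity to reduce this to $x(f\circ l_a\circ\Phi^{-2})(e_\Phi)$. The same expression arises from unwinding $(\phi(x))(f\circ l_a\circ\Phi^{-1})(e_\Phi)$ directly, so $\phi(x)$ is left-invariant. For closure under $[\cdot,\cdot]_\Phi$, I would expand
\[
[x,y]_\Phi(f)(a) = x\bigl((\Phi^{-1})^*(y((\Phi^{-1})^* f))\bigr)(\Phi(a)) - (x\leftrightarrow y),
\]
apply the left-invariance of the outer $x$ to shift the evaluation to $e_\Phi$, and use the intertwining identity to rewrite $\Phi^{-1}\circ l_{\Phi(a)}$ as $l_a\circ\Phi^{-1}$, obtaining (in the first summand) $x\bigl(y(f\circ\Phi^{-1})\circ l_a\circ\Phi^{-2}\bigr)(e_\Phi)$. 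Unwinding $[x,y]_\Phi(f\circ l_a\circ\Phi^{-1})(e_\Phi)$ yields instead $x\bigl(y(f\circ l_a\circ\Phi^{-2})\circ\Phi^{-1}\bigr)(e_\Phi)$. To match these, I would apply the left-invariance of $y$ pointwise to each of the two inner functions and use Hom-associativity to simplify the resulting compositions $\Phi^{-1}(a\diamond\Phi^{-2}(b))\diamond\Phi^{-2}(c)$ and $a\diamond\Phi^{-2}(\Phi^{-1}(b)\diamond\Phi^{-1}(c))$ to the common form $a\diamond(\Phi^{-3}(b)\diamond\Phi^{-3}(c))$, so the two functions coincide and the two evaluations of $x$ at $e_\Phi$ agree. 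The symmetric term is handled identically.

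The main obstacle is the iterated bookkeeping: each invocation of the left-invariance condition introduces a new $\Phi^{-1}$ in the composition, so one has to track the accumulating powers of $\Phi$ across several layers and apply the intertwining identity and Hom-associativity at just the right moments to obtain cancellation. Aside from this combinatorial care, no deeper input beyond the Hom-group axioms and Definition~\ref{JMS} is required.
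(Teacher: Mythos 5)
Your argument is correct, and there is nothing in the paper to compare it against: the paper states this result as a citation of \cite{JMS} and supplies no proof of its own, so your direct verification is the only proof on the table. Working in the derivation picture $\Gamma(\Phi^!TG)\cong Der_{\Phi^*,\Phi^*}(C^\infty(G))$ with the left-invariance condition $x(f)(a)=x(f\circ l_a\circ\Phi^{-1})(e_\Phi)$ is exactly the right setting, and your two preliminary identities, $\Phi(e_\Phi)=e_\Phi$ (via uniqueness of the Hom-unit and surjectivity of $\Phi$) and $\Phi\circ l_a=l_{\Phi(a)}\circ\Phi$, are precisely what is needed. I checked the bookkeeping: for closure under $\phi$ both sides reduce to $x(f\circ l_a\circ\Phi^{-2})(e_\Phi)$, and for the bracket the two inner functions, after applying left-invariance of $y$ pointwise, both evaluate $y$ at $e_\Phi$ on $f$ precomposed with $c\mapsto a\diamond(\Phi^{-3}(b)\diamond\Phi^{-3}(c))$, the left-hand composite via Hom-associativity $(u\diamond v)\diamond\Phi(w)=\Phi(u)\diamond(v\diamond w)$ applied to $u=\Phi^{-1}(a)$, $v=\Phi^{-3}(b)$, $w=\Phi^{-3}(c)$. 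The one ingredient you use silently and should state explicitly is that $\Phi^{-1}$ (hence every negative power of $\Phi$) is again a morphism for $\diamond$, which follows immediately from axiom (i) and bijectivity; this is what lets you pull $\Phi^{-1}$ and $\Phi^{-2}$ inside the products when normalizing the two composites to their common form. With that remark added, the proof is complete and self-contained from the Hom-group axioms and Definition~\ref{JMS}.
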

\begin{remark}\label{Rem}
Let $(G, \diamond, e_{\Phi}, \Phi)$ be a Hom-Lie group and $\mathfrak{g}^!$ be the fibre of $e_{\Phi}$ in the pullback bundle $\Phi^!TG$. Then $\Phi^!T_{e_\Phi}G=\mathfrak{g}^!$ and also $\mathfrak{g}^!$ is in one-to-one correspondence with $\Gamma_L(\Phi^!TG)$ (see Lemma 3.11 in \cite{JMS}, for more details).
Moreover, defining a bracket $[\cdot,\cdot]_{\mathfrak{g^!}}$ and a vector space isomorphism $\phi_{\mathfrak{g^!}}:\mathfrak{g^!}\rightarrow \mathfrak{g^!}$ by 
\begin{align*}
[x(e_\Phi),y(e_\Phi) ]_{\mathfrak{g^!}}=&[x,y]_\Phi(e_\Phi),\\
\phi_{\mathfrak{g^!}}(x(e_\Phi))=&(\phi(x))(e_\Phi),
\end{align*}
for all $x,y\in \Gamma_L(\Phi^!TG)$. The triple $(\mathfrak{g^!}, [\cdot,\cdot]_{\mathfrak{g^!}},\phi_{\mathfrak{g^!}} )$ is a Hom-Lie algebra that is isomorphic to the Hom-Lie algebra $(\Gamma_L(\Phi^!TG), [\cdot,\cdot]_\Phi,\phi)$.
\end{remark}
\begin{definition}\label{JMS1}
	Let $(G,\diamond,e_\Phi,\Phi)$ be a Hom-Lie group. A smooth section $\omega\in \Gamma(\Phi^!T^*G)$ is called left-invariant if
	\begin{align*}
	\omega_a=(l_a\circ\Phi^{-1})^*{_{e_\Phi}}(\omega_{e_\Phi}),
	\end{align*}
	for any $a\in G$, where $((l_a\circ\Phi^{-1})^*\omega)(x)=\omega((l_a\circ\Phi^{-1})_*x)$. We denote by $\Gamma_L(\Phi^!T^*G)$, the space of these left-invariant sections.
	
\end{definition}
A {\it tensor $T$ of type $(p,q)$ or a $(p, q)$-tensor}  on a Hom-Lie group $(G, \diamond, e_{\Phi}, \Phi)$ is a $\Phi^*$-function linear mapping
\[
T:\underset{p}{\underbrace{\Gamma(\Phi^!T^*G)\times \cdots \times \Gamma(\Phi^!T^*G)}}\times \underset{q}{\underbrace{\Gamma(\Phi^!TG)\times \cdots \times \Gamma(\Phi^!TG)}}\rightarrow C^\infty (G).
\]
This means that given $y_1,\ldots, y_q\in\Gamma(\Phi^!TG)$ and $\theta^1, \cdots, \theta^p\in \Gamma(\Phi^!T^*G)$, $T(\theta^1, \cdots, \theta^p, y_1, \cdots, y_q)$ is a differentiable function on $G$ and that $T$ is $\Phi^*$-function linear  in each argument, i.e., 
\begin{align*}
T(\theta^1, \cdots, \theta^p, y_1, \cdots,fx+y,\cdots, y_q)=&\Phi^*(f)T(\theta^1, \cdots, \theta^p, y_1, \cdots,x,\cdots, y_q)\\
&+T(\theta^1, \cdots, \theta^p, y_1, \cdots,y,\cdots, y_q),
\end{align*} 
and 
\begin{align*}
T(\theta^1, \cdots,f\theta+\eta \cdots,\theta^p, y_1, \cdots, y_q)=&\Phi^*(f)T(\theta^1, \cdots, \theta \cdots,\theta^p, y_1, \cdots, y_q)\\
&+T(\theta^1, \cdots,\eta \cdots, \theta^p, y_1, \cdots, y_q),
\end{align*}
for any $x,y\in\Gamma(\Phi^!TG)$ and $\theta,\eta\in\Gamma(\Phi^!T^*G)$.
We denote the set of tensors of
type $(p,q)$
by $\mathcal{T}_{q}^{p}(\Gamma(\Phi^!TG))$. Also,  the set of tensors of
type $(p,q)$ on $(\mathfrak{g^!}, [\cdot,\cdot]_{\mathfrak{g^!}},\phi_{\mathfrak{g^!}} )$
is denoted 
by $\mathcal{T}_{q}^{p}(\mathfrak{g^!})$.
\begin{lemma}
	Let  $(G, \diamond, e_{\Phi}, \Phi)$ be a Hom-Lie group. There is an   isomorphism between $\mathcal{T}_{q}^{p+1}(\Phi^!TG)$ and the space of $\Phi^*$-function linear maps 
	\[
	\underset{p}{\underbrace{\Gamma(\Phi^!T^*G)\times \cdots \times \Gamma(\Phi^!T^*G)}}\times \underset{q}{\underbrace{\Gamma(\Phi^!TG)\times \cdots \times \Gamma(\Phi^!TG)}}\rightarrow \Gamma(\Phi^!TG),
	\]
	which we denote this space by $L_{p,q}(\Phi^!T^*G,\Phi^!TG;\Phi^!TG)$.
\end{lemma}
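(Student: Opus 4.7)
The plan is to set up the tensor–hom adjunction in the $\Phi^{*}$-twisted setting, producing explicit maps in both directions and checking they are mutual inverses. This is the natural analogue of the classical fact that $(p+1,q)$-tensors correspond to $(p,q)$-multilinear maps into vector fields via contraction with a covector slot.

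First I would define the forward map $\Psi\colon\mathcal{T}^{p+1}_{q}(\Phi^{!}TG)\to L_{p,q}(\Phi^{!}T^{*}G,\Phi^{!}TG;\Phi^{!}TG)$ by declaring $\Psi(T)(\theta^{1},\ldots,\theta^{p},y_{1},\ldots,y_{q})$ to be the unique section of $\Phi^{!}TG$ whose pointwise pairing with an arbitrary $\theta^{0}\in\Gamma(\Phi^{!}T^{*}G)$ recovers $T(\theta^{0},\theta^{1},\ldots,\theta^{p},y_{1},\ldots,y_{q})\in C^{\infty}(G)$. Well-definedness at a point $a\in G$ follows from a standard bump-function argument: since $\Phi^{*}$ is an algebra automorphism of $C^{\infty}(G)$, a function vanishing on a neighbourhood of $a$ has $\Phi^{*}$-image vanishing on a neighbourhood of $\Phi^{-1}(a)$, so $T(\theta^{0},\ldots)(a)$ depends only on the fibre value $\theta^{0}(a)$, yielding a linear functional on $(\Phi^{!}T^{*}G)_{a}\cong (\Phi^{!}TG)_{a}^{*}$ and therefore an element of $(\Phi^{!}TG)_{a}$. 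Smoothness of the resulting section is verified by testing against a local frame of $\Phi^{!}T^{*}G$.

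Next, for the inverse map, I would send $S\in L_{p,q}$ to the multilinear form $(\theta^{0},\theta^{1},\ldots,y_{q})\mapsto\theta^{0}\bigl(S(\theta^{1},\ldots,\theta^{p},y_{1},\ldots,y_{q})\bigr)\in C^{\infty}(G)$, and check its $\Phi^{*}$-function linearity in each argument directly from the corresponding property of $S$ (for the $\theta^{i}$ and $y_{j}$ slots with $i\geq 1$, $1\leq j\leq q$) and from the interaction of the pointwise pairing with multiplication by $C^{\infty}(G)$-functions (for the $\theta^{0}$ slot). That the two constructions are inverse to each other is immediate: a section of $\Phi^{!}TG$ is determined by its pairings with all covectors, and a function on $G$ is determined by its pointwise values.

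\textbf{Main obstacle.} The principal subtlety is bookkeeping of the $\Phi^{*}$-twists. The tensors $T$ pull out a factor $\Phi^{*}(f)$ whenever an argument is multiplied by $f\in C^{\infty}(G)$, whereas the pointwise pairing $\theta^{0}\mapsto\theta^{0}(x)$ is linear with respect to ordinary pointwise multiplication by $f$, not by $\Phi^{*}(f)$. To reconcile the two transformation rules one must insert the correct $\Phi^{*}$ (or $(\Phi^{*})^{-1}$) in the defining equation of $\Psi(T)$, in harmony with the twist already built into $Ad_{\Phi^{*}}^{\dagger}$ via $(Ad_{\Phi^{*}}^{\dagger}(\xi))(x)=\Phi^{*}\xi(Ad_{(\Phi^{*})^{-1}}(x))$ from the excerpt. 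Once the twist is placed correctly, the remaining checks (fibrewise nondegeneracy, smoothness, inverse identities) are routine pointwise duality arguments.
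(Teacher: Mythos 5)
Your proposal is correct and takes essentially the same approach as the paper: the paper's proof consists of writing down exactly this contraction-with-the-extra-covector-slot correspondence (in the direction from $L_{p,q}(\Phi^!T^*G,\Phi^!TG;\Phi^!TG)$ to $\mathcal{T}_q^{p+1}(\Phi^!TG)$, with explicit $Ad_{(\Phi^*)^{-2}}$ twists on the arguments and an outer $\Phi^*$ to reconcile the $\Phi^*$-function linearity) and then asserting it is an isomorphism. The twist bookkeeping you flag as the main obstacle is precisely what that explicit formula resolves, and your bump-function argument for pointwise well-definedness of the inverse direction supplies a detail the paper leaves implicit.
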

\begin{proof}
	We consider the map $\Psi:L_{p,q}(\Phi^!T^*G,\Phi^!TG;\Phi^!TG)\rightarrow \mathcal{T}_{q}^{p+1}(\Phi^!TG)$ given by
	\begin{align*}
	\Psi(F)(\omega,\theta^1, \cdots, \theta^p, x_1, \cdots, x_q)=&\Phi^*
	(\omega(F(Ad_{(\Phi^*)^{-2}}^\dagger(\theta^1), \cdots, Ad_{(\Phi^*)^{-2}}^\dagger(\theta^p),Ad_{(\Phi^*)^{-2}}( x_1), \cdots,\\
	& Ad_{(\Phi^*)^{-2}}( x_q)))).
	\end{align*}
	It is easy to see that $\Psi$ is an  isomorphism. 
\end{proof}
\begin{definition}
	A tensor $T\in\mathcal{T}_{q}^{p}(\Gamma(\Phi^!TG))$ is called  left-invariant if we have 
	\begin{align*}
&	T((l_a\circ\Phi^{-1})^*{_{e_\Phi}}((\theta^1)_{e_\Phi}),\cdots, (l_a\circ\Phi^{-1})^*{_{e_\Phi}}((\theta^p)_{e_\Phi}), (l_a\circ\Phi^{-1})_{*_{e_\Phi}}((x_1)_{e_\Phi}),\cdots,(l_a\circ\Phi^{-1})_{*_{e_\Phi}}((x_q)_{e_\Phi}))\\
&\ \ \ \ \ =T((\theta^1)_{e_\Phi},\cdots,(\theta^p)_{e_\Phi}
	,(x_1)_{e_\Phi},\cdots,(x_q)_{e_\Phi}),
	\end{align*}
	for any $\theta^1,\cdots,\theta^p\in\Gamma(\Phi^!T^*G)$ and $x_1,\cdots,x_q\in \Gamma(\Phi^!TG)$.
\end{definition}
\begin{remark}\label{IMP}
If $T$ is a left-invariant tensor of type $(p, q)$ on a Hom-Lie group $(G, \diamond, e_{\Phi}, \Phi)$, then using Definitions \ref{JMS} and \ref{JMS1} we get 
\begin{align*}
T((\theta^1)_a,\cdots,(\theta^p)_a
	,(x_1)_a,\cdots,(x_q)_a),=T((\theta^1)_{e_\Phi},\cdots,(\theta^p)_{e_\Phi}
	,(x_1)_{e_\Phi},\cdots,(x_q)_{e_\Phi}),\ \ \ \forall a\in G,
\end{align*}
where $\theta^1,\cdots,\theta^p\in\Gamma(\Phi^!T^*G)$ and $x_1,\cdots,x_q\in \Gamma(\Phi^!TG)$. So, the restriction of $T$ on left-invariant sections is constant.
\end{remark}
%%%%%%%%%%%%%%%%%%%%%%%%%%%%%%%%%%%%%%%%%%%%%%%%%%%%
%In this section, we recall some basic definitions on hom-category.  
%%%%%%%%%%%%%%%%%%%%%%%%%%%%%%%%%%%%%%%%%%%%%%%
\section{ Norden structures on Hom-Lie algebras}
\begin{definition}
An almost complex structure on a Hom-Lie group $(G, \diamond, e_{\Phi}, \Phi)$ is a $(1, 1)$-tensor field $J:\Phi^!TG\rightarrow\Phi^!TG$ such that $(Ad_{\Phi^*}\circ J)^2=-Id_{_{\Phi^!TG}}$ and $Ad_{\Phi^*}\circ J=J\circ Ad_{\Phi^*}$. Moreover, if for all $x\in\Gamma_L(\Phi^!TG)$ we have $J\circ x\in\Gamma_L(\Phi^!TG)$, then $J$ is called a left-invariant almost complex structure. The  left-invariant almost complex structure is integrable (left-invariant complex structure) if the Nijenhuis torsion $N_{Ad_{\Phi^*}\circ J}$ of $Ad_{\Phi^*}\circ J$ defined by
\begin{align}\label{M1}
	N_{{ \phi_\mathfrak{g^!}}\circ J}(x,y)&=[(Ad_{\Phi^*}\circ J)x, (Ad_{\Phi^*}\circ J)y]-Ad_{\Phi^*}\circ J[({{ \phi_\mathfrak{g^!}}\circ J})x, y]\nonumber\\
	&\ \ \ -Ad_{\Phi^*}\circ J[x,(Ad_{\Phi^*}\circ J)y]-[x,y],
\end{align}
for any $x, y\in \Gamma_L(\Phi^!TG)$, is zero. 
\end{definition}
According to Remark \ref{Rem}, we can define a left-invariant almost complex structure on a Hom-Lie group as follows:
\begin{definition}
	A left invariant almost complex structure on the Hom-Lie group $(G, \diamond, e_{\Phi}, \Phi)$ (or an almost complex structure on a Hom-Lie algebra $(\mathfrak{g^!}, [\cdot  , \cdot ]_\mathfrak{g^!}, { \phi_\mathfrak{g^!}})$ ), is an isomorphism  $J :\mathfrak{g^!}\rightarrow\mathfrak{g^!}$ that  satisfies  
	$
	(\phi_{\mathfrak{g}^!}\circ J)^2 = -Id_{\mathfrak{g}^!}$ and ${ \phi_{\mathfrak{g}^!}}\circ J=J\circ{ \phi_{\mathfrak{g^!}}}$. 
\end{definition}
	We denote an almost complex Hom-Lie algebra by  $(\mathfrak{g}^!, [\cdot  , \cdot ], { \phi_\mathfrak{g^!}}, J)$.
If the $n$-dimensional Hom-Lie algebra $\mathfrak{g^!}$ admits an almost complex structure $J$, then
\[
(det(\phi_{\mathfrak{g^!}}\circ J))^2=det((\phi_{\mathfrak{g^!}}\circ J)^2)=det(-Id_{\mathfrak{g^!}})=(-1)^n,
\]
which implies that $n$ is even.
\begin{definition}\label{LM1}
Let  $(G, \diamond, e_{\Phi}, \Phi)$  be a Hom-Lie group. A $(0, 2)$-tensor symmetric and non-degenerate $\langle\cdot ,\cdot \rangle:\Gamma(\Phi^! TG)\times\Gamma(\Phi^! TG)\rightarrow C^\infty(G)$  is called a pseudo-Riemannian metric if it is a left-invariant tensor, i.e., 
\[
\langle Ad_{\Phi^*}(x), Ad_{\Phi^*}(y)\rangle =
\Phi^*\langle x,y\rangle, 
\]
for any $x,y\in \Gamma(\Phi^! TG)$. Moreover, $\langle\cdot ,\cdot\rangle$
 is called a left-invariant  pseudo-Riemannian metric if 
\begin{align*}
	\langle(l_a\circ\Phi^{-1})_{*_{e_\Phi}}(x_{e_\Phi}), (l_a\circ\Phi^{-1})_{*_{e_\Phi}}(y_{e_\Phi})\rangle_a=\langle x_{e_\Phi}, y_{e_\Phi}\rangle_{e_\Phi},
\end{align*} 
for all $x, y\in\Gamma_L(\Phi^!TG)$.
\end{definition}
\begin{remark}
According to Remark \ref{Rem} and Definition \ref{LM1}, we conclude that a left-invariant pseudo-Riemannian metric on a Hom-Lie group $(G, \diamond, e_{\Phi}, \Phi)$  (or a  pseudo-Riemannian metric on a Hom-Lie algebra $(\mathfrak{g}^!, [\cdot  , \cdot ], { \phi_\mathfrak{g^!}})$) is a $\Phi^*$-function linear symmetric non-degenerate form $\langle\cdot,\cdot\rangle$ such that 
\begin{equation*}
	\langle\phi_\mathfrak{g^!}(x), \phi_\mathfrak{g^!}(y)\rangle=\langle x, y\rangle,
\end{equation*}
fro any $x,y\in \mathfrak{g^!}$. In this case,   $(G, \diamond, e_{\Phi}, \Phi, \langle\cdot,\cdot\rangle)$ (or $(\mathfrak{g}^!, [\cdot  , \cdot ], { \phi_\mathfrak{g^!}}), \langle\cdot,\cdot\rangle$) is called a left-invariant pseudo-Riemannian Hom-Lie group
(or a pseudo-Riemannian Hom-Lie algebra).
Also, if $\phi_\mathfrak{g^!}^2=Id_\mathfrak{g^!}$, then we have 
	\begin{equation}\label{EL1}
	\langle\phi_\mathfrak{g^!}(x), y\rangle=\langle x, \phi_\mathfrak{g^!}(y)\rangle,
	\end{equation}
	for any $x,y\in \mathfrak{g^!}$. Changing $\mathfrak{g^!}$ to $\mathfrak{g}$ and $\phi_{\mathfrak{g^!}}$ to $\phi_\mathfrak{g}$, we have the definition of a pseudo-Riemannian metric on a Hom-Lie algebra $(\mathfrak{g}, [\cdot,\cdot]_\mathfrak{g}, \phi_\mathfrak{g})$ \cite{PN}.
\end{remark}
\begin{definition}\label{+}
A linear connection on a Hom-Lie group $(G, \diamond, e_{\Phi}, \Phi)$  is a map
\begin{align*}
\begin{cases}
&\nabla:\Gamma(\Phi^!TG)\times\Gamma(\Phi^!TG))\rightarrow \Gamma(\Phi^!TG))\\
&\hspace{1cm}(x, y)\longrightarrow\nabla_xy
\end{cases},
\end{align*}
with the following properties:\\
i) $\nabla_{x+y}z=\nabla_xz+\nabla_yz$,\\
ii) $\nabla_{fx}y=\Phi^*(f)\nabla_xy$,\\
iii) $\nabla_x(y+z)=\nabla_xy+\nabla_xz$,\\
iv) $\nabla_x(fy)=\Phi^*(f)\nabla_xy+Ad_{\Phi^*}(x)(f)Ad_{\Phi^*}y$,\\
for any $f\in C^{\infty}(G ), x,y,z\in\Gamma(\Phi^!TG) $.
\end{definition}
\begin{theorem}\label{**}
Let $(G, \diamond, e_{\Phi}, \Phi, \langle\cdot,\cdot\rangle)$ be a pseudo-Riemannian Hom-Lie group. Then there exists the unique connection $\nabla$ on it which is characterized by the following properties:

(i)\ \ $\nabla$ is symmetric, i.e., 
\begin{align*}
[x, y]_\Phi=\nabla_xy-\nabla_yx,
\end{align*}

(ii)\ \ $\nabla$ is compatible with  $\langle\cdot,\cdot\rangle$ , i.e., 
\[
Ad_{\Phi^*}(x)<y,z>=<\nabla_xy,Ad_{\Phi^*}(z)>+<Ad_{\Phi^*}(y),\nabla_xz>,
\]
for any $x,y ,z \in \Gamma (\Phi^!TG)$. This connection is called the Hom-Levi-Civita connection.
\end{theorem}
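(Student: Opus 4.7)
The strategy is to adapt the classical Koszul construction of the Levi-Civita connection to the Hom setting. Writing $\phi:=Ad_{\Phi^*}$ for brevity, and assuming both (i) and (ii) hold for some $\nabla$, the first step is to derive a Koszul-type formula. Namely, summing (ii) applied to the cyclic triples $(x,y,z)$ and $(y,z,x)$ and subtracting it applied to $(z,x,y)$ yields, after using symmetry of $\langle\cdot,\cdot\rangle$ together with property (i) in the form $\nabla_x y+\nabla_y x=2\nabla_x y-[x,y]_\Phi$, the identity
\begin{align*}
2\langle \nabla_x y,\phi(z)\rangle &= \phi(x)\langle y,z\rangle + \phi(y)\langle z,x\rangle - \phi(z)\langle x,y\rangle \\
&\quad + \langle \phi(z),[x,y]_\Phi\rangle - \langle \phi(y),[x,z]_\Phi\rangle - \langle \phi(x),[y,z]_\Phi\rangle.
\end{align*}
Because $\phi$ is a bijection (as $\Phi$ is a diffeomorphism) and $\langle\cdot,\cdot\rangle$ is non-degenerate, the values of $\langle \nabla_x y,\phi(z)\rangle$ for all $z\in\Gamma(\Phi^!TG)$ pin down $\nabla_x y$ uniquely, giving the uniqueness half of the theorem.

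For existence, I would \emph{define} $\nabla_x y$ to be the unique section for which the displayed Koszul formula holds for every $z$; well-definedness rests on non-degeneracy of $\langle\cdot,\cdot\rangle$ together with surjectivity of $\phi$ on $\Gamma(\Phi^!TG)$. The symmetric property (i) is then verified by writing the Koszul formula for $\langle \nabla_x y-\nabla_y x,\phi(z)\rangle$, swapping the roles of $x$ and $y$, and checking term-by-term cancellation to reduce to $\langle [x,y]_\Phi,\phi(z)\rangle$; compatibility (ii) follows from adding the Koszul expressions for $\langle \nabla_x y,\phi(z)\rangle$ and $\langle \phi(y),\nabla_x z\rangle$ and recognising $\phi(x)\langle y,z\rangle$ on the right-hand side. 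The additivity axioms (i) and (iii) of Definition \ref{+} are inherited from the linearity of each summand in the Koszul expression.

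The main obstacle, and the only non-routine part of the argument, is verifying the twisted function-linearity axioms
\[
\nabla_{fx}y=\Phi^*(f)\nabla_x y,\qquad \nabla_x(fy)=\Phi^*(f)\nabla_x y + Ad_{\Phi^*}(x)(f)\,Ad_{\Phi^*}(y).
\]
Tracing how a factor of $f$ propagates through each of the six Koszul summands requires repeated use of three facts: the relation $\phi(fx)=\Phi^*(f)\phi(x)$, the $\Phi^*$-compatibility $\langle \phi(\cdot),\phi(\cdot)\rangle=\Phi^*\langle\cdot,\cdot\rangle$ of the metric, and the explicit expansion of $[fx,y]_\Phi$ and $[x,fy]_\Phi$ obtained from the composition formula defining $[\cdot,\cdot]_\Phi$. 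In the argument $\nabla_{fx}y$ case, all $Ad_{\Phi^*}(y)(f)$- and $Ad_{\Phi^*}(z)(f)$-contributions arising from the derivation-like action on the metric and on the bracket must cancel after pairing, leaving only $\Phi^*(f)$ times the original right-hand side; in the $\nabla_x(fy)$ case, a single uncancelled derivation term survives and must reproduce exactly $2\langle Ad_{\Phi^*}(x)(f)\,\phi(y),\phi(z)\rangle$, matching the expected correction. Once this bookkeeping is verified, all four axioms of Definition \ref{+} together with (i) and (ii) hold, and the proof is complete.
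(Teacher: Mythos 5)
Your proposal is correct and follows essentially the same route as the paper: both rest on the Hom--Koszul formula $2\langle\nabla_xy,Ad_{\Phi^*}(z)\rangle=Ad_{\Phi^*}(x)\langle y,z\rangle+Ad_{\Phi^*}(y)\langle z,x\rangle-Ad_{\Phi^*}(z)\langle x,y\rangle+\langle [x,y]_\Phi,Ad_{\Phi^*}(z)\rangle+\langle [z,x]_\Phi,Ad_{\Phi^*}(y)\rangle+\langle [z,y]_\Phi,Ad_{\Phi^*}(x)\rangle$, which the paper simply posits and you additionally derive from (i) and (ii), using non-degeneracy of the metric (and surjectivity of $Ad_{\Phi^*}$) for uniqueness and the formula itself as the definition for existence. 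Your explicit attention to the twisted function-linearity axioms is a point the paper dismisses as ``easy to see,'' but the underlying argument is the same.
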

\begin{proof}
Considering $\nabla$ as 
\begin{align}\label{*}
2\langle\nabla_xy,Ad_{\Phi^*}(z)\rangle=&Ad_{\Phi^*}(x)\langle y,z\rangle
+Ad_{\Phi^*}(y)\langle z,x\rangle-Ad_{\Phi^*}(z)\langle x,y\rangle\\
&+\langle [x,y]_\Phi,Ad_{\Phi^*}(z)\rangle+\langle [z,x]_\Phi,Ad_{\Phi^*}(y)\rangle+\langle [z,y]_\Phi,Ad_{\Phi^*}(x)\rangle\nonumber,
\end{align}
it is easy to see that $\nabla$ satisfies in the properties of Definition. Also, the above equation gives us 
\[
\langle\nabla_xy-\nabla_yx,Ad_{\Phi^*}(z) \rangle=
\langle[x,y]_\Phi,Ad_{\Phi^*}(z) \rangle,
\]
which implies (i). In the similar way, we can deduce (ii). Finally (\ref{*})  shows the uniqueness of $\nabla$.
\end{proof}
Formula (\ref{*}) given in the above theorem is called  Koszul's formula.
\begin{definition}\label{++}
A linear connection $\nabla$ on a Hom-Lie group $(G, \diamond, e_{\Phi}, \Phi)$ is called left-invariant if for all $x,y\in \Gamma_L(\Phi^!TG)$ we have $\nabla_xy\in \Gamma_L(\Phi^!TG)$.
\end{definition}
\begin{remark}\label{&&}
If $\langle\cdot,\cdot\rangle$  is a left-invariant pseudo-Riemannian metric on a Hom-Lie group $(G, \diamond, e_{\Phi}, \Phi)$, then using Remark \ref{IMP},
$\langle x,y\rangle$  is a constant for all $x, y\in \Gamma_L(\Phi^!TG)$, and so we get $Ad_{\Phi^*}(z)\langle x,y\rangle=0$. Therefore   Koszul's formula reduces to the following 
\begin{align*}
2\langle\nabla_xy,Ad_{\Phi^*}(z)\rangle=\langle [x,y]_\Phi,Ad_{\Phi^*}(z)\rangle+\langle [z,x]_\Phi,Ad_{\Phi^*}(y)\rangle+\langle [z,y]_\Phi,Ad_{\Phi^*}(x)\rangle,
\end{align*}
for any $x,y,z\in\Gamma_L(\Phi^!TG)$. 
\end{remark}
\begin{remark}
According to Definitions \ref{+} and \ref{++}, we can rewrite the definition of left-invariant connection on a Hom-Lie group (or a connection on a Hom-Lie algebra $(\mathfrak{g^!}, [\cdot,\cdot]_{\mathfrak{g^!}}, \phi_{\mathfrak{g^!}})$) as follows:\\
A left-invariant connection on a Hom-Lie group $(G, \diamond, e_{\Phi}, \Phi)$ (or a connection on a Hom-Lie algebra $(\mathfrak{g^!}, [\cdot,\cdot]_{\mathfrak{g^!}}, \phi_{\mathfrak{g^!}})$) is a linear map
\begin{align*}
\begin{cases}
&\nabla:\mathfrak{g^!}\times\mathfrak{g^!}\rightarrow \mathfrak{g^!}\\
&\hspace{.2cm}(x, y)\longrightarrow\nabla_xy
\end{cases},
\end{align*}
with the following properties:
\[
 \nabla_{fx}y=\Phi^*(f)\nabla_xy,\ \ \ \ 
 \nabla_x(fy)=\Phi^*(f)\nabla_xy+\phi_{\mathfrak{g^!}}(x)(f)\phi_{\mathfrak{g^!}}(y),
 \]
for any $f\in C^{\infty}(G)$ and $ x,y,z\in\mathfrak{g^!} $. Also, from Theorem \ref{**} and Remark \ref{&&} we deduce that the linear connection $\nabla$ given by
\begin{equation}\label{Koszul}
	2\langle \nabla_x y,{ \phi_\mathfrak{g^!}}(z)\rangle=\langle[x,y]_\mathfrak{g^!},{ \phi_\mathfrak{g^!}}(z)\rangle+\langle[z,y]_\mathfrak{g^!},{ \phi_\mathfrak{g^!}}(x)\rangle+\langle[z,x]_\mathfrak{g^!},{ \phi_\mathfrak{g^!}}(y)\rangle,\ \ \ \forall x,y,z\in\mathfrak{g^!}, 
\end{equation}
is the unique left-invariant connection (or the unique connection) on a left-invariant pseudo-Riemannian Hom-Lie group  $(G, \diamond, e_{\Phi}, \Phi, \langle\cdot,\cdot\rangle)$ (or pseudo-Riemannian Hom-Lie algebra $(\mathfrak{g^!}, [\cdot,\cdot]_{\mathfrak{g^!}}, \phi_{\mathfrak{g^!}})$)
such that 
\begin{align}\label{twoo}
(i)\ \ [x,y]_{\mathfrak{g^!}}=\nabla_x y-\nabla_y x,\ \ \ \
(ii)\ \ \langle \nabla_x y,{ \phi_\mathfrak{g^!}}(z)\rangle=-\langle{ \phi_\mathfrak{g^!}}(y),\nabla_x z\rangle,
\end{align}
for any $x,y,z\in \mathfrak{g^!}$. This connection is called the Hom-Levi-Civita connection
on  $(G, \diamond, e_{\Phi}, \Phi, \langle\cdot,\cdot\rangle)$ (or $(\mathfrak{g}^!, [\cdot  , \cdot ]_{\mathfrak{g^!}}, { \phi_\mathfrak{g^!}}), \langle\cdot,\cdot\rangle$). Changing 
$\mathfrak{g^!}$ to $\mathfrak{g}$ and $\phi_{\mathfrak{g^!}}$ to $\phi_\mathfrak{g}$, we have the definition of a linear connection (also, Hom--Levi-Civita connection) for any arbitrary Hom-Lie algebra $(\mathfrak{g}, [\cdot,\cdot]_\mathfrak{g}, \phi_\mathfrak{g}, \langle\cdot,\cdot\rangle))$.
\end{remark}
 The total covariant derivative of a $(p,q)$-tensor $T$  on a Hom-Lie group $(G, \diamond, e_{\Phi}, \Phi, \langle\cdot,\cdot\rangle)$ is a tensor of   order  $(p,q+1)$  given by 
\begin{align}\label{IMP1}
(\nabla_xT)(\theta^1, \cdots, \theta^p, y_1, \cdots, y_q)&=\nabla_xT(\theta^1, \cdots, \theta^p, y_1, \cdots, y_q)\\
&-\sum_{i=1}^{p}T(Ad_{\Phi^*}^\dagger(\theta^1), \cdots, \nabla_x\theta^i, \cdots, Ad_{\Phi^*}^\dagger(\theta^p), Ad_{\Phi^*}(y_1), \cdots, Ad_{\Phi^*}(y_q))\nonumber\\
&-\sum_{i=1}^{q}T(Ad_{\Phi^*}^\dagger(\theta^1), \cdots, Ad_{\Phi^*}^\dagger(\theta^p),Ad_{\Phi^*}( y_1),  \cdots, \nabla_xy_i,\cdots,Ad_{\Phi^*}(y_q))\nonumber,
\end{align}
for all $\theta^1,\cdots,\theta^p\in\Gamma(\Phi^!T^*G)$ and $x_1,\cdots,x_q\in \Gamma(\Phi^!TG)$. If $\nabla$ is a left-invariant connection on a Hom-Lie group $(G, \diamond, e_{\Phi}, \Phi, \langle\cdot,\cdot\rangle)$, then using Remark \ref{IMP}, $T(\theta^1, \cdots, \theta^p, y_1, \cdots, y_q)$ is constant, for all $\theta^1,\cdots,\theta^p\in\Gamma(\Phi^!T^*G)$ and $x_1,\cdots,x_q\in \Gamma(\Phi^!TG)$. So, 
\begin{align*}
\nabla_xT(\theta^1, \cdots, \theta^p, y_1, \cdots, y_q)=x(T(\theta^1, \cdots, \theta^p, y_1, \cdots, y_q))=0.
\end{align*}
Therefore, (\ref{IMP1}) reduces to the following:
\begin{align}\label{IMP2}
(\nabla_xT)(\theta^1, \cdots, \theta^p, y_1, \cdots, y_q)&=-\sum_{i=1}^{p}T(Ad_{\Phi^*}^\dagger(\theta^1), \cdots, \nabla_x\theta^i, \cdots, Ad_{\Phi^*}^\dagger(\theta^p), Ad_{\Phi^*}(y_1), \cdots, Ad_{\Phi^*}(y_q))\\
&\ \ \ -\sum_{i=1}^{q}T(Ad_{\Phi^*}^\dagger(\theta^1), \cdots, Ad_{\Phi^*}^\dagger(\theta^p),Ad_{\Phi^*}( y_1),  \cdots, \nabla_xy_i,\cdots,Ad_{\Phi^*}(y_q))\nonumber,
\end{align}
\begin{remark}
Given  $(\mathfrak{g^!})^*$ as the dual 
space of $\mathfrak{g^!}$,
we denote by  $\phi_{{\mathfrak{g^!}}}^*$ the dual map of the endomorphism $\phi_{{\mathfrak{g^!}}}$.
In this case, according to (\ref{IMP2}), the total covariant derivative of $T\in \mathcal{T}_{q}^{p}(\mathfrak{g^!})$  is a tensor of   order  $(p,q+1)$  as follows 
 \begin{align}\label{L1}
(\nabla_xT)(\theta^1, \cdots, \theta^p,y_1, \cdots, y_q)&=-\sum_{i=1}^{p}T(\phi_{{\mathfrak{g^!}}}^*(\theta^1), \cdots, \nabla_x\theta^i, \cdots, \phi_{{\mathfrak{g^!}}}^*(\theta^p), \phi_{{\mathfrak{g^!}}}(y_1), \cdots, \phi_{{\mathfrak{g^!}}}(y_q))\\
&\ \ \ -\sum_{i=1}^{q}T(\phi_{{\mathfrak{g^!}}}^*(\theta^1), \cdots, \phi_{{\mathfrak{g^!}}}^*(\theta^p),\phi_{{\mathfrak{g^!}}}( y_1),  \cdots, \nabla_x y_i,\cdots,\phi_{{\mathfrak{g^!}}}( y_q))\nonumber,
\end{align}
for all $\theta^1,\cdots,\theta^p\in(\mathfrak{g^!})^*$ and $x_1,\cdots,x_q\in \mathfrak{g^!}$.
\end{remark}
\begin{definition}\label{L100}
Let  $(G, \diamond, e_{\Phi}, \Phi, J)$  be a left-invariant almost complex Hom-Lie group(or $(\mathfrak{g^!}, [\cdot,\cdot]_\mathfrak{g^!},\phi_{\mathfrak{g^!}} ,J)$ be an almost complex Hom-Lie algebra).
A left-invariant  pseudo-Riemannian metric $\langle\cdot,\cdot\rangle$ on 
$G$(or a  pseudo-Riemannian metric $\langle\cdot,\cdot\rangle$ on $\mathfrak{g^!}$) is called Norden metric (anti-Hermitian, B-metric) if 
		\begin{equation*}
	\langle({ \phi_\mathfrak{g^!}}\circ J)x, ({ \phi_\mathfrak{g^!}}\circ J)y\rangle=-\langle x , y\rangle, 
	\end{equation*}
	or equivalenty
	\begin{equation}\label{L14}
	\langle({ \phi_\mathfrak{g^!}}\circ J)x, y\rangle=\langle x , ({ \phi_\mathfrak{g^!}}\circ J)y\rangle, 
	\end{equation}
for all $x,y \in \mathfrak{g^!}$. In this case, we say that $(G, \diamond, e_{\Phi}, \Phi, J, \langle \cdot,\cdot\rangle)$ is a left-invariant almost Norden Hom-Lie group (or, $(\mathfrak{g^!}, [\cdot,\cdot]_\mathfrak{g^!},\phi_\mathfrak{g^!}, J, \langle\cdot,\cdot\rangle)$ is an almost Norden Hom-Lie algebra).
 Moreover, if ${{ \phi_\mathfrak{g^!}}\circ J}$ is integrable, then the pair $(J, \langle\cdot,\cdot\rangle)$ is called left-invariant Norden structure on $G$ (or, Norden structure on  $\mathfrak{g^!}$).
\end{definition}
%%%%%%%%%%%%%%%%%%%%%%%%%%%%%%%%%%%%%%%%%%%%%%%%%
\begin{remark}
All concepts defined on the Hom-Lie algebra $(\mathfrak{g^!}, [\cdot,\cdot]_\mathfrak{g^!},\phi_\mathfrak{g^!})$ of a Hom-Lie group 
$(G, \diamond, e_{\Phi}, \Phi)$   hold for an arbitrary Hom-Lie algebra. So, we study these concepts on Hom-Lie algebras as examples.
\end{remark}
%According to the above definition,
% we present  examples of  $4$-dimensional spaces.
 \begin{example}\label{L10}
 	We consider a $4$-dimensional linear space $\mathfrak{g}$ with an arbitrary basis $\{e_1,e_2,e_3,e_4\}$. We define the bracket
 	and linear map $\phi_\mathfrak{g}$ on $\mathfrak{g}$ as follows
 	\begin{align*}
 	[e_1,e_4]_\mathfrak{g}=&ae_2,\ \ \ [e_2,e_3]_\mathfrak{g}=ae_1,
 	\end{align*}
 	and
 	\[
 	\phi_\mathfrak{g}(e_1)=-e_2,\ \ \ \phi_\mathfrak{g}(e_2)=-e_1,\ \ \ \phi_\mathfrak{g}(e_3)=e_4, \ \ \phi_\mathfrak{g}(e_4)=e_3.
 	\]
 	The above bracket is not a Lie bracket on $\mathfrak{g}$ if $a\neq 0$,  because
 	\[
 	[e_2,[e_3,e_4]_\mathfrak{g}]_\mathfrak{g}+[e_3,[e_4,e_2]_\mathfrak{g}]_\mathfrak{g}+[e_4,[e_2,e_3]_\mathfrak{g}]_\mathfrak{g}=[e_4,ae_1]_\mathfrak{g}+[e_3,ae_1]_\mathfrak{g}=-ae_2.
 	\]
 	It is easy to see that
 	\begin{align*}
 	[\phi_\mathfrak{g}(e_1),\phi_\mathfrak{g}(e_4)]_\mathfrak{g}=-ae_1=\phi_\mathfrak{g}([e_1, e_4]_\mathfrak{g}),\ \ [\phi_\mathfrak{g}(e_2),\phi_\mathfrak{g}(e_3)]_\mathfrak{g}=-ae_2=\phi_\mathfrak{g}([e_2, e_3]_\mathfrak{g}),
 	\end{align*}
 	i.e., $\phi_\mathfrak{g}$ is the algebra morphism. Also, we can deduce
 	\begin{equation*}
 	[\phi_\mathfrak{g}(e_i),[e_j,e_k]_\mathfrak{g}]_\mathfrak{g}+[\phi_\mathfrak{g}(e_j),[e_k,e_i]_\mathfrak{g}]_\mathfrak{g}+[\phi_\mathfrak{g}(e_k),[e_i,e_j]_\mathfrak{g}]_\mathfrak{g}=0,\ \ \ i,j,k=1, 2, 3, 4.
 	\end{equation*}
 	Thus $(\mathfrak{g}, [\cdot,\cdot]_\mathfrak{g}, \phi_\mathfrak{g})$ is a Hom-Lie algebra.
 	We define the metric $\langle\cdot,\cdot\rangle$ of $\mathfrak{g}$ as follows
 	\[
 	\begin{bmatrix}
 	A&B&C&D\\
 	B&A&-D&-C\\
 	C&-D&-A&B\\
 	D&-C&B&-A
 	\end{bmatrix}
 	.
 	\]
 	It follows that  $\langle\phi_\mathfrak{g}(e_i),\phi_\mathfrak{g}(e_j)\rangle=\langle e_i,e_j\rangle$, for all $i,j=1,2,3,4$, hence
  (i) of (\ref{twoo})  holds and so $(\mathfrak{g}, [\cdot, \cdot]_\mathfrak{g}, \phi_{\mathfrak{g}}, \langle\cdot,\cdot\rangle)$ is a pseudo-Riemannian Hom-Lie algebra. Assume that isomorphism $J$ is determined as
 	\[
 	J(e_1)=-e_3,\ \ \ J(e_2)=e_4,\ \ \ J(e_3)=e_1, \ \ J(e_4)=- e_2,
 	\]
 	then we have
 	\begin{align*}
 	(J\circ\phi_\mathfrak{g})e_1=&-e_4=(\phi_\mathfrak{g}\circ J)e_1,\ \ \ (J\circ\phi_\mathfrak{g})e_2=e_3=(\phi_\mathfrak{g}\circ J)e_2,\\
 	(J\circ\phi_\mathfrak{g})e_3=&-e_2=(\phi_\mathfrak{g}\circ J)e_3,\ \ \ (J\circ\phi_\mathfrak{g})e_4=e_1=(\phi_\mathfrak{g}\circ J)e_4.\\
 	\end{align*}
 	Thus $J$ is an almost complex structure on $(\mathfrak{g}, [\cdot, \cdot]_\mathfrak{g}, \phi_{\mathfrak{g}})$. Also, since
 $
 	N_{\phi_\mathfrak{g}\circ J}(e_i,e_j)=0$, for all $i,j=1,2,3,4$,
  $J$ is an complex structure on $(\mathfrak{g}, [\cdot,\cdot]_\mathfrak{g}, \phi_\mathfrak{g})$. Moreover, we get $\langle({ \phi_\mathfrak{g}}\circ J)(e_i), ({ \phi_\mathfrak{g}}\circ J)(e_j)\rangle=-\langle e_i, e_j\rangle$, for all $i, j=1,2,3,4$. Thus $(\mathfrak{g}, [\cdot, \cdot]_\mathfrak{g}, \phi_{\mathfrak{g}}, J, \langle\cdot,\cdot\rangle)$ is a Norden Hom-Lie algebra.
 \end{example}
\begin{example}
We consider a $4$-dimensional Hom-Lie algebra $(\mathfrak{g},[\cdot,\cdot]_\mathfrak{g},\phi_\mathfrak{g})$ with an arbitrary basis $\{e_1,e_2,e_3,e_4\}$ such that bracket $[\cdot, \cdot]_\mathfrak{g}$ and linear map $\phi_\mathfrak{g}$ on $\mathfrak{g}$ are defined by
\begin{align*}
[e_1,e_4]_\mathfrak{g}=ae_1+ae_2,\ \ \  [e_2,e_3]_\mathfrak{g}=ae_1+ae_2,\ \ \ \ [e_3,e_4]_\mathfrak{g}=-a e_3+ae_4,
\end{align*}
and
\[
\phi_\mathfrak{g}(e_1)=e_2,\ \ \ \phi_\mathfrak{g}(e_2)=e_1,\ \ \ \phi_\mathfrak{g}(e_3)=e_4, \ \ \phi_\mathfrak{g}(e_4)=e_3.
\]
The above bracket is not a Lie bracket on $\mathfrak{g}$ if $a\neq 0$. Now we consider the metric $\langle\cdot,\cdot\rangle$ of $\mathfrak{g}$ as follows:
\begin{equation*}
\begin{bmatrix}
-1&0&0&0\\
0&-1&0&0\\
0&0&1&0\\
0&0&0&1
\end{bmatrix}
.
\end{equation*}
It can be checked easily that  $\langle\phi_\mathfrak{g}(e_i),\phi_\mathfrak{g}(e_j)\rangle=\langle e_i,e_j\rangle$, for all $i,j=1,2,3,4$.
 If isomorphism $J$ is determined as
\[
J(e_1)=e_4,\ \ \ J(e_2)=e_3,\ \ \ J(e_3)=-e_2, \ \ J(e_4)=- e_1,
\]
a simple calculation shows that 
\begin{align*}
(J\circ\phi_\mathfrak{g})e_1=&e_3=(\phi_\mathfrak{g}\circ J)e_1,\ \ \ (J\circ\phi_\mathfrak{g})e_2=e_4=(\phi_\mathfrak{g}\circ J)e_2,\\
(J\circ\phi_\mathfrak{g})e_3=&-e_1=(\phi_\mathfrak{g}\circ J)e_3,\ \ \ (J\circ\phi_\mathfrak{g})e_4=-e_2=(\phi_\mathfrak{g}\circ J)e_4.
\end{align*}
So $J$ is an almost complex structure on $(\mathfrak{g}, [\cdot, \cdot]_\mathfrak{g}, \phi_\mathfrak{g})$.  Also, we see that 
\begin{equation*}
\langle({ \phi_\mathfrak{g}}\circ J)(e_i), ({ \phi_\mathfrak{g}}\circ J)(e_j)\rangle=\langle e_i, e_j\rangle,\ \ \ \forall i, j=1,2,3,4. 
\end{equation*}
Hence $(\mathfrak{g}, [\cdot, \cdot]_\mathfrak{g}, \phi_\mathfrak{g}, J, \langle\cdot,\cdot\rangle)$
 is an almost Norden Hom-Lie algebra. It is easy to see that 
$
N(e_1, e_2)\neq 0. 
$
Thus, $J$ is not integrable and consequently $(\mathfrak{g}, [\cdot, \cdot]_\mathfrak{g}, \phi_\mathfrak{g}, J, \langle\cdot,\cdot\rangle)$ is not a Norden Hom-Lie algebra.
\end{example}
	\begin{example}\label{L8}
	Let  $(\mathfrak{g}, [\cdot,\cdot]_\mathfrak{g}, \phi_\mathfrak{g})$ be a Hom-Lie algebra of the dimensional $4$  with an arbitrary basis $\{e_1,e_2,e_3,e_4\}$ where the bracket
		and linear map $\phi_\mathfrak{g}$ on $\mathfrak{g}$ are given by
		\begin{align*}
		[e_1,e_2]_\mathfrak{g}=&-e_3,\ \ \ [e_1,e_3]_\mathfrak{g}=e_2,\ \ \ [e_2,e_4]_\mathfrak{g}=e_2, \ \ \ [e_3,e_4]_\mathfrak{g}=e_3,
		\end{align*}
		and
		\[
		\phi_\mathfrak{g}(e_1)=e_1,\ \ \ \phi_\mathfrak{g}(e_2)=-e_2,\ \ \ \phi_\mathfrak{g}(e_3)=-e_3, \ \ \phi_\mathfrak{g}(e_4)=e_4.
		\]
		We define the metric $\langle\cdot,\cdot\rangle$ of $\mathfrak{g}$ as follows
		\[
		\begin{bmatrix}
		A&0&0&0\\
		0&B&0&0\\
		0&0&-B&0\\
		0&0&0&-A
		\end{bmatrix}
		.
		\]
		It is easy to see that  $\langle\phi_\mathfrak{g}(e_i),\phi_\mathfrak{g}(e_j)\rangle=\langle e_i,e_j\rangle$, for all $i,j=1,2,3,4$.
		Thus (i) of (\ref{twoo}) holds and so $(\mathfrak{g}, [\cdot, \cdot]_\mathfrak{g}, \phi_{\mathfrak{g}}, \langle\cdot,\cdot\rangle)$ is a pseudo-Riemannian Hom-Lie algebra. Define isomorphism $J$  as
		\[
		J(e_1)=-e_4,\ \ \ J(e_2)=-e_3,\ \ \ J(e_3)=e_2, \ \ J(e_4)= e_1,
		\]
		we deduce that
		\begin{align*}
		(J\circ\phi_\mathfrak{g})e_1=&-e_4=(\phi_\mathfrak{g}\circ J)e_1,\ \ \ (J\circ\phi_\mathfrak{g})e_2=e_3=(\phi_\mathfrak{g}\circ J)e_2,\\
		(J\circ\phi_\mathfrak{g})e_3=&-e_2=(\phi_\mathfrak{g}\circ J)e_3,\ \ \ (J\circ\phi_\mathfrak{g})e_4=e_1=(\phi_\mathfrak{g}\circ J)e_4.\\
		\end{align*}
		Thus $J$ is an almost complex structure on $(\mathfrak{g}, [\cdot, \cdot]_\mathfrak{g}, \phi_{\mathfrak{g}})$. Also, we obtain
		\begin{align*}
		N_{\phi_\mathfrak{g}\circ J}(e_i,e_j)=0,\ \ \ \ \forall i,j=1,2,3,4,
		\end{align*}
		i.e., $J$ is an complex structure on $(\mathfrak{g}, [\cdot,\cdot]_\mathfrak{g}, \phi_\mathfrak{g})$. It is easy to check that $\langle({ \phi_\mathfrak{g}}\circ J)(e_i), ({ \phi_\mathfrak{g}}\circ J)(e_j)\rangle=-\langle e_i, e_j\rangle$, for all $i, j=1,2,3,4$, and so $(\mathfrak{g}, [\cdot, \cdot]_\mathfrak{g}, \phi_{\mathfrak{g}}, J, \langle\cdot,\cdot\rangle)$ is a Norden Hom-Lie algebra.
\end{example}
\begin{theorem}\label{MA8}
	Let $( J, \langle\cdot,\cdot\rangle)$ be a left-invariant almost Norden structure on a Hom-Lie group $(G, \diamond, e_{\Phi}, \Phi)$ (or, an  almost Norden structure on a Hom-Lie algebra  $(\mathfrak{g^!}, [\cdot, \cdot]_{\mathfrak{g^!}}, \phi_\mathfrak{g^!})$). Then we have 
	\begin{align*}
 \langle{ \phi_\mathfrak{g^!}}(y),(	\nabla_x({ \phi_\mathfrak{g^!}}\circ J))z\rangle
=\langle (	\nabla_x({ \phi_\mathfrak{g^!}}\circ J))y,{ \phi_\mathfrak{g^!}}(z)\rangle,
	\end{align*}
	for any $x,y,z\in \mathfrak{g^!}$,  where $\nabla$ is the left-invariant Hom-Levi-Civita connection on $G$ (or, the Hom-Levi-Civita connection on $\mathfrak{g^!}$) and $(\nabla_x( { \phi_\mathfrak{g^!}}\circ J))y=\nabla_x( { \phi_\mathfrak{g^!}}\circ J)y-(
	{ \phi_\mathfrak{g^!}}\circ J)\nabla_xy$.
\end{theorem}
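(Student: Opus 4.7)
The plan is to abbreviate $A := \phi_{\mathfrak{g}^!} \circ J$ and work with $A$ throughout. Two algebraic features of $A$ will drive the proof: the Norden condition (\ref{L14}) says precisely that $A$ is self-adjoint for $\langle\cdot,\cdot\rangle$, and since $\phi_{\mathfrak{g}^!}\circ J = J\circ \phi_{\mathfrak{g}^!}$ we have $\phi_{\mathfrak{g}^!}\circ A = A\circ \phi_{\mathfrak{g}^!}$. The third ingredient is part (ii) of (\ref{twoo}), i.e.\ the $\phi$-twisted skew-symmetry $\langle \nabla_x y, \phi_{\mathfrak{g}^!}(z)\rangle = -\langle \phi_{\mathfrak{g}^!}(y), \nabla_x z\rangle$.

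The strategy is to expand the left-hand side using the definition of the covariant derivative of $A$,
\[
\langle \phi_{\mathfrak{g}^!}(y), (\nabla_x A)z\rangle = \langle \phi_{\mathfrak{g}^!}(y), \nabla_x(Az)\rangle - \langle \phi_{\mathfrak{g}^!}(y), A\nabla_x z\rangle,
\]
and to rewrite each term so that, when recombined, the expression reads $\langle (\nabla_x A)y, \phi_{\mathfrak{g}^!}(z)\rangle$. For the first term I would apply (ii) to pull $\nabla_x$ off $Az$, obtaining $-\langle \nabla_x y, \phi_{\mathfrak{g}^!}(Az)\rangle$, then use the commutation $\phi_{\mathfrak{g}^!}A=A\phi_{\mathfrak{g}^!}$ and the Norden self-adjointness of $A$ to convert this into $-\langle A\nabla_x y, \phi_{\mathfrak{g}^!}(z)\rangle$. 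For the second term I would first transfer $A$ to the other slot by self-adjointness to get $\langle A\phi_{\mathfrak{g}^!}(y), \nabla_x z\rangle = \langle \phi_{\mathfrak{g}^!}(Ay), \nabla_x z\rangle$, and then apply (ii) with $Ay$ in place of $y$ to obtain $-\langle \nabla_x(Ay), \phi_{\mathfrak{g}^!}(z)\rangle$.

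Subtracting these two rewrites yields
\[
\langle \phi_{\mathfrak{g}^!}(y), (\nabla_x A)z\rangle = \langle \nabla_x(Ay) - A\nabla_x y, \phi_{\mathfrak{g}^!}(z)\rangle = \langle (\nabla_x A)y, \phi_{\mathfrak{g}^!}(z)\rangle,
\]
which is the desired identity. Since every manipulation is purely algebraic and invokes only one of the three ingredients at a time, there is no substantive obstacle; the only delicate point is bookkeeping the $\phi_{\mathfrak{g}^!}$-factors so that (ii) can be applied in the exact form stated. The argument for an abstract Hom-Lie algebra $(\mathfrak{g}^!,[\cdot,\cdot]_{\mathfrak{g}^!},\phi_{\mathfrak{g}^!})$ is identical, since the proof uses only the Koszul-type properties (i) and (ii) and the two tensorial identities for $A$.
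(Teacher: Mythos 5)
Your proof is correct and follows essentially the same route as the paper's: both arguments rest on the compatibility identity (ii) of (\ref{twoo}) applied once with $(\phi_{\mathfrak{g^!}}\circ J)y$ and once with $(\phi_{\mathfrak{g^!}}\circ J)z$, together with the self-adjointness of $\phi_{\mathfrak{g^!}}\circ J$ from (\ref{L14}) and its commutation with $\phi_{\mathfrak{g^!}}$, followed by a subtraction. The only difference is organizational (you expand the left-hand side first rather than writing the two instances of (ii) up front), so the two proofs coincide in substance.
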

\begin{proof}
From (ii) of (\ref{twoo}), we have
		\begin{align*}
	&\langle \nabla_x ({ \phi_\mathfrak{g^!}}\circ J)y,{ \phi_\mathfrak{g^!}}(z)\rangle=-\langle{ \phi_\mathfrak{g^!}}(({ \phi_\mathfrak{g^!}}\circ J)y),\nabla_x z\rangle,\\
	&\langle \nabla_x y,{ \phi_\mathfrak{g^!}}(({ \phi_\mathfrak{g^!}}\circ J)z)\rangle=-\langle{ \phi_\mathfrak{g^!}}(y),\nabla_x ({ \phi_\mathfrak{g^!}}\circ J)z\rangle.
		\end{align*}
			Since $\langle ({ \phi_\mathfrak{g^!}}\circ J)\cdot,\cdot \rangle=\langle \cdot,({ \phi_\mathfrak{g^!}}\circ J)\cdot \rangle$,  from the above equations it follows that
				\begin{align*}
			&\langle \nabla_x ({ \phi_\mathfrak{g^!}}\circ J)y,{ \phi_\mathfrak{g^!}}(z)\rangle=-\langle{ \phi_\mathfrak{g^!}}(y),({ \phi_\mathfrak{g^!}}\circ J)(\nabla_x z)\rangle,\\
			&\langle ({ \phi_\mathfrak{g^!}}\circ J)(\nabla_x y),{ \phi_\mathfrak{g^!}}(z)\rangle=-\langle{ \phi_\mathfrak{g^!}}(y),\nabla_x ({ \phi_\mathfrak{g^!}}\circ J)z\rangle.
			\end{align*}
		 Subtracting the above equations,  we conclude  the assertion. 
		\end{proof}
%%%%%%%%%%%%%%%%%%%%%%%%%%%%%%%%%%%%%%%%%%%%%%%%%%%%%%%%%
\subsection{Holomorphic tensors on Hom-Lie groups and Hom-Lie algebras}
 We present   the theory of Tachibana operators  for Hom-Lie groups and Hom-Lie algebras  and then 
give the notion of a holomorphic tensor on a Hom-Lie group and a Hom-Lie algebra.

A left-invariant complex Hom-Lie group (or, a complex  Hom-Lie algebra) is a left-invariant almost complex Hom-Lie group $(G, \diamond, e_\Phi, \Phi, J)$ (or, an almost complex Hom-Lie algebra $(\mathfrak{g^!}, [\cdot, \cdot]_{\mathfrak{g^!}}, \phi_\mathfrak{g^!}, J)$) such that the tensor $J$ is integrable.
In this case, the left-invariant almost complex structure (or, the almost complex structure) $J$ is called a left-invariant complex structure (or, a complex structure).
On the other hand, in order that an almost complex structure be integrable, it is necessary and sufficient that  for any $x,y\in \mathfrak{g^!}$,  we consider a left-invariant connection   $\nabla$ 
 such that  the  symmetric condition be held, i.e., 
\begin{align*}
[x,y]_{\mathfrak{g^!}}&=\nabla_x y-\nabla_y x, 
\end{align*}
and ${ \phi_\mathfrak{g^!}}\circ J$ is invariant with respect to $\nabla$, that is $\nabla
({ \phi_\mathfrak{g^!}}\circ J)=0$ or 
\[\nabla_x 
({ \phi_\mathfrak{g^!}}\circ J)=({ \phi_\mathfrak{g^!}}\circ J)\nabla_x.
\]
Note that condition $\nabla_x({ \phi_\mathfrak{g^!}}\circ J)(y)=({ \phi_\mathfrak{g^!}}\circ J)\nabla_x y$ is equivalent with 
\begin{equation*}
\nabla_{({ \phi_\mathfrak{g^!}}\circ J)(x)}({ \phi_\mathfrak{g^!}}\circ J)(y)=({ \phi_\mathfrak{g^!}}\circ J)\nabla_{({ \phi_\mathfrak{g^!}}\circ J)(x)} y,
\end{equation*}
and
\begin{equation*}
\nabla_x y=-({ \phi_\mathfrak{g^!}}\circ J)\nabla_x({ \phi_\mathfrak{g^!}}\circ J)(y).
\end{equation*}
 (\ref{M1}) and two last equations imply 
\begin{align}
N_{{ \phi_\mathfrak{g^!}}\circ J}(x,y)=&\nabla_{({ \phi_\mathfrak{g^!}}\circ J)(x)}  ({ \phi_\mathfrak{g^!}}\circ J)(y)-\nabla_{({ \phi_\mathfrak{g^!}}\circ J)(y)} ({ \phi_\mathfrak{g^!}}\circ J)(x)-({ \phi_\mathfrak{g^!}}\circ J)(\nabla_{({ \phi_\mathfrak{g^!}}\circ J)(x)} y\label{AM11}\\
&-\nabla_y ({ \phi_\mathfrak{g^!}}\circ J)(x))-({ \phi_\mathfrak{g^!}}\circ J)(\nabla_x({ \phi_\mathfrak{g^!}}\circ J)(y)-\nabla_{({ \phi_\mathfrak{g^!}}\circ J)(y)} x)-\nabla_x y-\nabla_yx=0\nonumber.
\end{align}
It is also known that the integrability of $J$ is equivalent to the vanishing of the Nijenhuis tensor $N_{{ \phi_\mathfrak{g^!}}\circ J}$. So, $J$  is integrable.

We can give another equivalent definition  of left-invariant complex Hom-Lie group $(G, \diamond, e_\Phi, \Phi, J)$ (or, complex  Hom-Lie algebras $(\mathfrak{g^!}, [\cdot, \cdot]_{\mathfrak{g^!}}, \phi_\mathfrak{g^!}, J)$) in terms of  holomorphics. Given a 
tensor  $\omega\in \mathcal{T}_{q}^{0}(\mathfrak{g^!})$, the purity means that for any $x_1,\cdots,x_q\in \mathfrak{g^!}$ the following condition should be held
%A tensor field $\omega$ of type $(0, q)$ is called a pure tensor field with respect to $J$ if
\[
\omega(({ \phi_\mathfrak{g^!}}\circ J)x_1,x_2, \cdots, x_q)=\omega(x_1,({ \phi_\mathfrak{g^!}}\circ J)x_2, \cdots, x_q)=\cdots=\omega(x_1,x_2, \cdots, ({ \phi_\mathfrak{g^!}}\circ J)x_q).
\]
We define an operator
\[
\Phi_{{{ \phi_\mathfrak{g^!}}\circ J}}:\mathcal{T}_{q}^{0}(\mathfrak{g^!})\longrightarrow \mathcal{T}_{q+1}^{0}(\mathfrak{g^!}),
\]
applied to the pure tensor $\omega\in \mathcal{T}_{q}^{0}(\mathfrak{g^!})$ by
\begin{align}\label{L3}
(\Phi_{{{ \phi_\mathfrak{g^!}}\circ J}}\ \omega)(x, y_1,y_2,\cdots, y_q)&=
\sum_{i=1}^q \omega( \phi_\mathfrak{g^!}(y_1),\cdots,\phi_\mathfrak{g^!}(y_{i-1}),[{y_i},({ \phi_\mathfrak{g^!}}\circ J)x]_{\mathfrak{g^!}}\\
&\ \ \ -({ \phi_\mathfrak{g^!}}\circ J) [{y_i},x]_{\mathfrak{g^!}},\phi_\mathfrak{g^!}(y_{i+1}),\cdots, \phi_\mathfrak{g^!}(y_q)),\nonumber
%&\omega((\pounds_{v_1}({ \phi_\mathfrak{g}}\circ J))u,\phi_\mathfrak{g}(v_2),\cdots, \phi_\mathfrak{g}(v_q))\label{L3}\\
%&+\cdots+\omega(\phi_\mathfrak{g}(v_1),\phi_\mathfrak{g}(v_2),\cdots,(\pounds_{v_q}({ \phi_\mathfrak{g}}\circ J))u ),\nonumber
\end{align}
%where
%\begin{align*}
%ad_{v_1}{ \phi_\mathfrak{g}}\circ J)(u)-({ \phi_\mathfrak{g}}\circ J) ad _{v_1}u,
%\end{align*}
for any $x,y_1,\cdots,y_q\in \mathfrak{g^!}$.
The  pure tensor $\omega$ is called holomorphic if $J$  is a complex structure on $\mathfrak{g^!}$ and 
\[
\Phi_{{{ \phi_\mathfrak{g^!}}\circ J}}\ \omega=0.
\]
According to Definition \ref{L100}, we see that the Norden metric $\langle\cdot,\cdot\rangle$ on the almost complex Hom-Lie algebra  $(\mathfrak{g^!}, [\cdot, \cdot]_{\mathfrak{g^!}}, \phi_{\mathfrak{g^!}},J)$
 is a pure tensor with respect to $J$.
%%%%%%%%%%%%%%%%%%%%%%%%%%%%%%%%%%%%%%%%%%%%%%%%%%%%%%%%
\section{ Left-invariant K\"{a}hler-Norden Hom-Lie groups (K\"{a}hler-Norden Hom-Lie algebras)}
In this section, we introduce left-invariant K\"{a}hler-Norden structures on Hom-Lie groups (K\"{a}hler-Norden structures on Hom-Lie algebras) and we present  examples of these structures.
\begin{definition}\label{KKKKK}
A left-invariant K\"{a}hler Hom-Lie group (or, a K\"{a}hler-Norden Hom-Lie algebra) is a left-invariant almost Norden Hom-Lie group $(G, \diamond, e_\Phi, \Phi, J, \langle\cdot, \cdot\rangle)$ (or, an almost Norden Hom-Lie algebra $(\mathfrak{g^!}, [\cdot, \cdot]_{\mathfrak{g^!}}, \phi_{\mathfrak{g^!}},J,\langle\cdot,\cdot\rangle)$)  such that ${ \phi_\mathfrak{g^!}}\circ J$ is invariant with respect to the Hom-Levi-Civita connection $\nabla$, i.e.,
\begin{align}\label{L19}
\nabla( { \phi_\mathfrak{g^!}}\circ J)=0.
 \end{align}
 % for any $x\in \mathfrak{g^!}$.
\end{definition}
Note that 
(\ref{AM11}) implies that the structure $J$ introduced in Definition \ref{KKKKK} is integrable.
%66666666666666666666666666666666666666666666666666666666666666666666
\begin{remark}
The above definition is hold for an arbitrary Hom-Lie algebra $(g, [\cdot, \cdot], \phi)$.
\end{remark}
\begin{example}\label{L98}
Consider the Hom-Lie algebra $4$-dimensional $(\mathfrak{g}, [\cdot,\cdot]_\mathfrak{g}, \phi_\mathfrak{g})$ introduced in Example (\ref{L8}). We study K\"{a}hlerian-Norden property for this Hom-Lie algebra. If we denote the Hom-Levi-Civita connection by $\nabla$, from  Koszul's formula given by (\ref{Koszul}) we deduce that $\langle \nabla_{e_i} e_j, \phi_\mathfrak{g}(e_k)\rangle=0$, for all $i,j,k=1, 2, 3, 4$, expect
\begin{align*}
&\nabla_{e_2} e_1=e_3,\ \ \ \nabla_{e_2} e_2=-\frac{B}{A}e_4, \ \ \nabla_{e_2} e_3=-\frac{B}{A}e_1,\ \  \nabla_{e_2} e_4=e_2,\\
&\nabla_{e_3} e_1=-e_2,\ \ \nabla_{e_3} e_2=-\frac{B}{A}e_1, \ \  \nabla_{e_3} e_3=\frac{B}{A}e_4,\ \  \nabla_{e_3} e_4=e_3.
\end{align*}
Easily we can see that 
 ${ \phi_\mathfrak{g}}\circ J$ is invariant with respect to the Hom-Levi-Civita connection computed in the above, i.e, (\ref{L19}) holds. So  $(\mathfrak{g}, [\cdot, \cdot]_\mathfrak{g}, \phi_{\mathfrak{g}},J,\langle\cdot,\cdot\rangle)$ is a  K\"{a}hler-Norden Hom-Lie algebra.
\end{example}
\begin{example}\label{L91}
	Let $\{e_1,\cdots, e_6\}$ be a basis of a $6$-dimensional Hom-Lie algebra $(\mathfrak{g},[\cdot, \cdot]_\mathfrak{g}, \phi_{\mathfrak{g}})$, where
		\begin{align*}
	[e_3,e_5]_\mathfrak{g}=&-e_2,\ \ \ [e_3,e_6]_\mathfrak{g}=e_1,\ \ \ [e_4,e_5]_\mathfrak{g}=e_1, \ \ \ [e_4,e_6]_\mathfrak{g}=e_2,
	 \ \ \ [e_5,e_6]_\mathfrak{g}=e_3,
	\end{align*}
	and
	\[
	\phi_\mathfrak{g}(e_1)=-e_1,\ \ \ \phi_\mathfrak{g}(e_2)=-e_2,\ \ \ \phi_\mathfrak{g}(e_3)=e_3, \ \ \phi_\mathfrak{g}(e_4)=e_4, \ \ \phi_\mathfrak{g}(e_5)=-e_5, \ \ \phi_\mathfrak{g}(e_6)=-e_6.
	\]
We define an isomorphism $J$  on $\mathfrak{g}$ as follows
	\[
	J(e_1)=-e_2,\ \ \ J(e_2)=e_1,\ \ \ J(e_3)=e_4, \ \ J(e_4)= -e_3,
	 \ \ J(e_5)= e_6, \ \ J(e_6)= -e_5,
	 	\]
which implies that
	\begin{align*}
	(J\circ\phi_\mathfrak{g})e_1=&e_2=(\phi_\mathfrak{g}\circ J)e_1,\ \ \ \ \ (J\circ\phi_\mathfrak{g})e_2=-e_1=(\phi_\mathfrak{g}\circ J)e_2,\\
	(J\circ\phi_\mathfrak{g})e_3=&e_4=(\phi_\mathfrak{g}\circ J)e_3,\ \ \ \ \ (J\circ\phi_\mathfrak{g})e_4=-e_3=(\phi_\mathfrak{g}\circ J)e_4,\\
	(J\circ\phi_\mathfrak{g})e_5=&-e_6=(\phi_\mathfrak{g}\circ J)e_5,\ \  (J\circ\phi_\mathfrak{g})e_6=e_5=(\phi_\mathfrak{g}\circ J)e_6.
	\end{align*}
	Thus $J$ is an almost complex structure on $(\mathfrak{g}, [\cdot, \cdot]_\mathfrak{g}, \phi_{\mathfrak{g}})$. On the other hand, $N_{\phi_\mathfrak{g}\circ J}(e_i,e_j)=0$, for all  $i,j=1,\cdots,6$, so $J$ is a complex structure.
	If	we consider the metric $\langle\cdot,\cdot\rangle$ of $\mathfrak{g}$ as follows
	\[
	\begin{bmatrix}
	0&0&0&0&\frac{A}{2}&0\\
	0&0&0&0&0&\frac{A}{2}\\
	0&0&A&0&0&0\\
	0&0&0&-A&0&0\\
		\frac{A}{2}&0&0&0&0&0\\
	0&\frac{A}{2}&0&0&0&0
	\end{bmatrix}
	,\]
then $\langle\phi_\mathfrak{g}(e_i),\phi_\mathfrak{g}(e_j)\rangle=\langle e_i,e_j\rangle$, for all $i,j=1,\cdots,6$.
So  $(\mathfrak{g}, [\cdot, \cdot]_\mathfrak{g}, \phi_{\mathfrak{g}}, \langle\cdot,\cdot\rangle)$ is a pseudo-Riemannian Hom-Lie algebra. Also, since $\langle({ \phi_\mathfrak{g}}\circ J)(e_i), ({ \phi_\mathfrak{g}}\circ J)(e_j)\rangle=-\langle e_i, e_j\rangle$, for all $i,j=1,\cdots,6$, thus $(\mathfrak{g}, [\cdot, \cdot]_\mathfrak{g}, \phi_{\mathfrak{g}}, J, \langle\cdot,\cdot\rangle)$ is a Norden Hom-Lie algebra. 
Using (\ref{Koszul}), we obtain $\langle e_i\cdot e_j, \phi_\mathfrak{g}(e_k)\rangle=0$, for all $i,j,k=1,\cdots,6$, expect
\begin{align}
&%e_3\cdot e_5=0,\ \ \ \ e_3\cdot e_6=0,\ \ \ \ \ \ \ e_4\cdot e_5=0, \ \ \ \ \  e_4\cdot e_6=0,
\nabla_{e_5} e_3=e_2,\ \ \ \ \ \ \ \  \nabla_{e_5} e_4=-e_1,\ \ \ \ \ \ \ \nabla_{e_5} e_5=\frac{1}{2}e_4, \ \ \ \ \ \ \
\nabla_{e_5} e_6=\frac{1}{2}e_3, \label{L93}\\
&\nabla_{e_6} e_3=-e_1, \ \ \ \ \ \ \nabla{ e_6} e_4=-e_2,\ \ \ \ \ \ \   \nabla_{e_6} e_5=-\frac{1}{2}e_3,\ \ \ \ \ \nabla_{ e_6} e_6=\frac{1}{2}e_4,\nonumber
\end{align}
where $\nabla$ denote the Hom-Levi-Civita connection on $\mathfrak{g}$.
Easily we can see that 
 (\ref{L19}) holds, therefore  $(\mathfrak{g}, [\cdot, \cdot]_\mathfrak{g}, \phi_{\mathfrak{g}},J,\langle\cdot,\cdot\rangle)$ is a  K\"{a}hler-Norden Hom-Lie algebra.
	\end{example}
\begin{proposition}
Let $(G, \diamond, e_\Phi, \Phi, J, \langle\cdot, \cdot\rangle)$ be a left-invariant almost Norden Hom-Lie group (or, $(\mathfrak{g}, [\cdot, \cdot]_{\mathfrak{g^!}}, \phi_{\mathfrak{g^!}},J,\langle\cdot,\cdot\rangle)$ be an almost Norden Hom-Lie algebra). Then $(G, \diamond, e_\Phi, \Phi, J, \langle\cdot, \cdot\rangle)$ (or,\\
 $(\mathfrak{g^!}, [\cdot, \cdot]_{\mathfrak{g^!}}, \phi_{\mathfrak{g^!}},J,\langle\cdot,\cdot\rangle)$) is a left-invariant K\"{a}hler-Norden Hom-Lie group (or, a K\"{a}hler-Norden Hom-Lie algebra) if and only if 
\begin{align}\label{L13}
(\nabla_{({ \phi_\mathfrak{g^!}}\circ J)x} ({ \phi_\mathfrak{g^!}}\circ J))y=-({ \phi_\mathfrak{g^!}}\circ J)(\nabla_x ({ \phi_\mathfrak{g^!}}\circ J))y,
\end{align}
for any $x,y\in\mathfrak{g^!}$.
\end{proposition}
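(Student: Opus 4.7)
The forward direction is immediate: if $\nabla(\phi_{\mathfrak{g^!}}\circ J)=0$, both sides of (\ref{L13}) vanish. The content lies in the converse, which I prove at the Hom-Lie algebra level; the Hom-Lie group version follows via Remark \ref{Rem}. Write $K:=\phi_{\mathfrak{g^!}}\circ J$, and for each $x\in\mathfrak{g^!}$ let $A_x y:=(\nabla_x K)y=\nabla_x(Ky)-K\nabla_x y$, so that (\ref{L13}) reads $A_{Kx}=-K\circ A_x$ and the goal is $A_x\equiv 0$.

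Two identities are automatic on any almost Norden Hom-Lie algebra: first, since $K^2=-\mathrm{Id}_{\mathfrak{g^!}}$, a one-line expansion gives $A_x\circ K=-K\circ A_x$, so the hypothesis rewrites as the cleaner $A_{Kx}=A_x\circ K$; second, Theorem \ref{MA8} states that the bilinear form $B_x(y,z):=\langle A_x y,\phi_{\mathfrak{g^!}}(z)\rangle$ is symmetric in $y,z$. I now evaluate $\langle A_x(Ky),\phi_{\mathfrak{g^!}}(z)\rangle$ in two ways. Applying $A_{Kx}=A_x\circ K$ together with the symmetry of $B_{Kx}$ yields
\begin{equation*}
\langle A_x(Ky),\phi_{\mathfrak{g^!}}(z)\rangle=\langle A_{Kx}y,\phi_{\mathfrak{g^!}}(z)\rangle=\langle A_{Kx}z,\phi_{\mathfrak{g^!}}(y)\rangle=\langle A_x(Kz),\phi_{\mathfrak{g^!}}(y)\rangle.
\end{equation*}
Alternatively, using the automatic anti-commutation $A_xK=-KA_x$, the Norden identity (\ref{L14}), the commutation $\phi_{\mathfrak{g^!}}\circ K=K\circ\phi_{\mathfrak{g^!}}$ (which is immediate from $\phi_{\mathfrak{g^!}}\circ J=J\circ\phi_{\mathfrak{g^!}}$), and finally the symmetry of $B_x$,
\begin{equation*}
\langle A_x(Ky),\phi_{\mathfrak{g^!}}(z)\rangle=-\langle KA_xy,\phi_{\mathfrak{g^!}}(z)\rangle=-\langle A_xy,\phi_{\mathfrak{g^!}}(Kz)\rangle=-\langle A_x(Kz),\phi_{\mathfrak{g^!}}(y)\rangle.
\end{equation*}
Equating the two chains forces $\langle A_x(Kz),\phi_{\mathfrak{g^!}}(y)\rangle=0$ for all $y,z\in\mathfrak{g^!}$. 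Non-degeneracy of $\langle\cdot,\cdot\rangle$ together with bijectivity of $\phi_{\mathfrak{g^!}}$ yields $A_x(Kz)=0$, and the bijectivity of $K$ (whose inverse is $-K$) then gives $A_x=0$.

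There is no deep obstacle, only careful book-keeping: one must keep separate the identities that hold for free on any almost Norden Hom-Lie algebra, namely $A_xK=-KA_x$ and the symmetry of $B_x$ from Theorem \ref{MA8}, from the genuine input provided by the hypothesis (\ref{L13}). Once these are disentangled, the two evaluations of $\langle A_x(Ky),\phi_{\mathfrak{g^!}}(z)\rangle$ collide up to a sign, and the conclusion $A_x\equiv 0$ follows with no further computation.
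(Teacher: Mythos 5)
Your proposal is correct and follows essentially the same route as the paper: the paper also introduces the trilinear form $\alpha(x,y,z)=\langle(\nabla_x(\phi_{\mathfrak{g^!}}\circ J))y,\phi_{\mathfrak{g^!}}(z)\rangle$, invokes Theorem \ref{MA8} for its symmetry in the last two arguments, and combines the hypothesis (\ref{L13}) with the anti-commutation coming from $(\phi_{\mathfrak{g^!}}\circ J)^2=-\mathrm{Id}$ and the Norden self-adjointness (\ref{L14}) to force $\alpha=-\alpha$, exactly as in your two evaluations of $\langle A_x(Ky),\phi_{\mathfrak{g^!}}(z)\rangle$. Your operator packaging ($A_x$, $B_x$) is only a notational variant of the paper's chain of identities (\ref{L15})--(\ref{L17}).
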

\begin{proof}
	Consider $\alpha(x,y,z)=\langle(\nabla_x({ \phi_\mathfrak{g^!}}\circ J))y,\phi_{\mathfrak{g^!}}(z)\rangle$. Theorem \ref{MA8} implies that $\alpha$ is symmetric in the last two variables
	\begin{align}\label{L15}
\alpha(x,y,z)=\alpha(x,z,y).
	\end{align}
	Using the definition of $\alpha$, we have
	\begin{align*}
\alpha(({ \phi_\mathfrak{g^!}}\circ J)x,y,z)=
\langle(\nabla_{({ \phi_\mathfrak{g^!}}\circ J)x}({ \phi_\mathfrak{g^!}}\circ J))y,\phi_{\mathfrak{g^!}}(z)\rangle.
	\end{align*}
	(\ref{L14}),  (\ref{L13}) and the above equation imply
		\begin{align*}
	\alpha(({ \phi_\mathfrak{g^!}}\circ J)x,y,z)=&-
	\langle({ \phi_\mathfrak{g^!}}\circ J)(\nabla_{x}( { \phi_\mathfrak{g^!}}\circ J))y,\phi_{\mathfrak{g^!}}(z)\rangle=-
	\langle(\nabla_{x}( { \phi_\mathfrak{g^!}}\circ J))y,({ \phi_\mathfrak{g^!}}\circ J)(\phi_{\mathfrak{g^!}}(z))\rangle,
	\end{align*}
	from which it follows 
		\begin{align}\label{L16}
	\alpha(({ \phi_\mathfrak{g^!}}\circ J)x,y,z)=-\alpha(x,y,({ \phi_\mathfrak{g^!}}\circ J)z).
	\end{align}
	On the other hand, we have
		\begin{align*}
	\alpha(({ \phi_\mathfrak{g^!}}\circ J)x,y,z)=-
	\langle({ \phi_\mathfrak{g^!}}\circ J)(\nabla_x ({ \phi_\mathfrak{g^!}}\circ J))y,\phi_{\mathfrak{g^!}}(z)\rangle.
	\end{align*} 
The above equation  is equivalent to  
		\begin{align*}
	\alpha(({ \phi_\mathfrak{g^!}}\circ J)x,y,z)=
	\langle(\nabla_x ({ \phi_\mathfrak{g^!}}\circ J))({ \phi_\mathfrak{g^!}}\circ J)y,\phi_{\mathfrak{g^!}}(z)\rangle,
	\end{align*}
	which gives us
		\begin{align}\label{L17}
	\alpha(({ \phi_\mathfrak{g^!}}\circ J)x,y,z)=
	\alpha(x,({ \phi_\mathfrak{g^!}}\circ J)y,z).
	\end{align}
	By (\ref{L15}), (\ref{L16}) and (\ref{L17}), we get
		\begin{align*}
	\alpha(({ \phi_\mathfrak{g^!}}\circ J)x,y,z)=
	\alpha(({ \phi_\mathfrak{g^!}}\circ J)x,z,y)=
		\alpha(x,({ \phi_\mathfrak{g^!}}\circ J)z,y)=
		\alpha(x,y,({ \phi_\mathfrak{g^!}}\circ J)z)=
		-\alpha(({ \phi_\mathfrak{g^!}}\circ J)x,y,z).
	\end{align*}
From the above equation and the non-degenerate condition of $\langle\cdot ,\cdot \rangle$, we have  $\nabla({ \phi_\mathfrak{g^!}}\circ J)=0$.
	\end{proof}
%%%%%%%%%%%%%%%%%%%%%%%%%%%%%%%%%%%%%%%%%%%%%%%%%%%%%%%%%%%%%%%%%%%%%%%%%%%
\subsection{Left-invariant abelian structures on Hom-Lie groups (abelian complex structures on Hom-Lie algebras)}
Let $(G, \diamond, e_\Phi, \Phi, J)$ be a left-invariant almost complex Hom-Lie group (or, $(\mathfrak{g^!}, [\cdot  , \cdot ]_{\mathfrak{g^!}}, {\phi_\mathfrak{g^!}}, J)$ be an almost complex Hom-Lie algebra). 
The left-invariant almost complex $J$ (or, the almost complex structure $J$) is called abelian if 
\begin{align}\label{L22}
[({ \phi_\mathfrak{g^!}}\circ J)x,({ \phi_\mathfrak{g^!}}\circ J)y]_{\mathfrak{g^!}}=[x,y]_{\mathfrak{g^!}},
\end{align}
for any $x,y\in\mathfrak{g^!}$.
\begin{lemma}
A left-invariant almost Norden Hom-Lie group $(G, \diamond, e_\Phi, \Phi, J, \langle\cdot, \cdot\rangle)$ (or, an  almost Norden Hom-Lie algebra $(\mathfrak{g^!}, [\cdot, \cdot]_{\mathfrak{g^!}}, \phi_{\mathfrak{g^!}},J,\langle\cdot,\cdot\rangle)$) is a  left-invariant K\"{a}hler-Norden Hom-Lie group (or, a K\"{a}hler-Norden Hom-Lie algebra) if for any $x\in\mathfrak{g^!}$, the following condition  satisfies
\begin{align}\label{L18}
\nabla_{({ \phi_\mathfrak{g^!}}\circ J)x}=-({ \phi_\mathfrak{g^!}}\circ J) \nabla_x.
\end{align}
Moreover, the complex structure $J$ is abelian.
\end{lemma}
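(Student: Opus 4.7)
The plan is to deduce the K\"{a}hler-Norden condition from the characterization just established in the preceding proposition, namely condition (\ref{L13}): $(\mathfrak{g}^!,[\cdot,\cdot]_{\mathfrak{g}^!},\phi_{\mathfrak{g}^!},J,\langle\cdot,\cdot\rangle)$ is K\"{a}hler-Norden if and only if $(\nabla_{(\phi_{\mathfrak{g}^!}\circ J)x}(\phi_{\mathfrak{g}^!}\circ J))y=-(\phi_{\mathfrak{g}^!}\circ J)(\nabla_x(\phi_{\mathfrak{g}^!}\circ J))y$ for all $x,y\in\mathfrak{g}^!$. I would first unwind the left-hand side using the definition $(\nabla_x(\phi_{\mathfrak{g}^!}\circ J))y=\nabla_x(\phi_{\mathfrak{g}^!}\circ J)y-(\phi_{\mathfrak{g}^!}\circ J)\nabla_xy$, then apply the hypothesis (\ref{L18}) to \emph{both} terms in $(\nabla_{(\phi_{\mathfrak{g}^!}\circ J)x}(\phi_{\mathfrak{g}^!}\circ J))y=\nabla_{(\phi_{\mathfrak{g}^!}\circ J)x}(\phi_{\mathfrak{g}^!}\circ J)y-(\phi_{\mathfrak{g}^!}\circ J)\nabla_{(\phi_{\mathfrak{g}^!}\circ J)x}y$, obtaining $-(\phi_{\mathfrak{g}^!}\circ J)\nabla_x(\phi_{\mathfrak{g}^!}\circ J)y+(\phi_{\mathfrak{g}^!}\circ J)^2\nabla_xy$.

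Next I would do the analogous expansion on the right-hand side, yielding $-(\phi_{\mathfrak{g}^!}\circ J)\nabla_x(\phi_{\mathfrak{g}^!}\circ J)y+(\phi_{\mathfrak{g}^!}\circ J)^2\nabla_xy$. The two expressions coincide identically (no use of $(\phi_{\mathfrak{g}^!}\circ J)^2=-Id_{\mathfrak{g}^!}$ is even required for the equality, although of course the identity holds). Hence (\ref{L13}) is verified and the structure is K\"{a}hler-Norden. This calculation is entirely formal and presents no real obstacle; the only thing to be careful about is applying (\ref{L18}) to each of the two summands after the Leibniz-type expansion rather than before.

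For the second assertion, I would invoke the torsion-freeness property (i) of (\ref{twoo}), writing $[(\phi_{\mathfrak{g}^!}\circ J)x,(\phi_{\mathfrak{g}^!}\circ J)y]_{\mathfrak{g}^!}=\nabla_{(\phi_{\mathfrak{g}^!}\circ J)x}(\phi_{\mathfrak{g}^!}\circ J)y-\nabla_{(\phi_{\mathfrak{g}^!}\circ J)y}(\phi_{\mathfrak{g}^!}\circ J)x$. Applying (\ref{L18}) in the first slot of each term converts this to $-(\phi_{\mathfrak{g}^!}\circ J)\nabla_x(\phi_{\mathfrak{g}^!}\circ J)y+(\phi_{\mathfrak{g}^!}\circ J)\nabla_y(\phi_{\mathfrak{g}^!}\circ J)x$. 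Because the K\"{a}hler-Norden property $\nabla(\phi_{\mathfrak{g}^!}\circ J)=0$ has just been established, $(\phi_{\mathfrak{g}^!}\circ J)$ commutes with $\nabla_x$ and $\nabla_y$, so this becomes $-(\phi_{\mathfrak{g}^!}\circ J)^2\nabla_xy+(\phi_{\mathfrak{g}^!}\circ J)^2\nabla_yx$. Finally, using $(\phi_{\mathfrak{g}^!}\circ J)^2=-Id_{\mathfrak{g}^!}$ and once more torsion-freeness, I obtain $\nabla_xy-\nabla_yx=[x,y]_{\mathfrak{g}^!}$, proving that $J$ is abelian in the sense of (\ref{L22}). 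The whole argument is driven by a single substitution together with the two algebraic identities $(\phi_{\mathfrak{g}^!}\circ J)^2=-Id_{\mathfrak{g}^!}$ and the Hom-Levi-Civita torsion-freeness; there is no substantive obstacle, only the need to apply the hypothesis in the correct order.
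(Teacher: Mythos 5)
Your proposal is correct and follows essentially the same route as the paper: both derive condition (\ref{L13}) by applying (\ref{L18}) to each term of the Leibniz expansion of $(\nabla_{(\phi_{\mathfrak{g^!}}\circ J)x}(\phi_{\mathfrak{g^!}}\circ J))y$ and then invoke the preceding proposition, and both prove abelianness by combining torsion-freeness with (\ref{L18}), (\ref{L19}) and $(\phi_{\mathfrak{g^!}}\circ J)^2=-Id_{\mathfrak{g^!}}$. The only (cosmetic) difference is that you keep $(\phi_{\mathfrak{g^!}}\circ J)^2$ symbolic in the first step, whereas the paper simplifies it to $-Id_{\mathfrak{g^!}}$ immediately.
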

\begin{proof}
	From (\ref{L18}), it follows
	\begin{align*}
&	\nabla_{({ \phi_\mathfrak{g^!}}\circ J)x} ({ \phi_\mathfrak{g^!}}\circ J)=-({ \phi_\mathfrak{g^!}}\circ J)\nabla_x( { \phi_\mathfrak{g^!}}\circ J),
\end{align*}
and
	\begin{align*}
	&({ \phi_\mathfrak{g^!}}\circ J)\nabla_{({ \phi_\mathfrak{g^!}}\circ J)x}= \nabla_x.
	\end{align*}
	Subtracting the above two equations, we get (\ref{L13}). Now, let $x,y\in\mathfrak{g^!}$. Using (i) of (\ref{twoo}), we obtain
	\begin{align*}
[({ \phi_\mathfrak{g^!}}\circ J)x,({ \phi_\mathfrak{g^!}}\circ J)y]_{\mathfrak{g^!}}=
\nabla_{({ \phi_\mathfrak{g^!}}\circ J)x}({ \phi_\mathfrak{g^!}}\circ J)y-
\nabla_{({ \phi_\mathfrak{g^!}}\circ J)y}({ \phi_\mathfrak{g^!}}\circ J)x.
	\end{align*}
	Applying (\ref{L19}) and (\ref{L18}) in the above equation yields 
	\begin{align*}
	[({ \phi_\mathfrak{g^!}}\circ J)x,({ \phi_\mathfrak{g^!}}\circ J)y]_{\mathfrak{g^!}}=
-	({ \phi_\mathfrak{g^!}}\circ J)(\nabla_x({ \phi_\mathfrak{g^!}}\circ J)y)+
	({ \phi_\mathfrak{g^!}}\circ J)(\nabla_y({ \phi_\mathfrak{g^!}}\circ J)x)=\nabla_x y-\nabla_y x=[x,y]_{\mathfrak{g^!}}.
	\end{align*}
	\end{proof}
\begin{proposition}\label{L29}
	Let  $(G, \diamond, e_\Phi, \Phi, J, \langle\cdot, \cdot\rangle)$ be a left-invariant K\"{a}hler-Norden Hom-Lie group (or, $(\mathfrak{g^!}, [\cdot, \cdot]_{\mathfrak{g^!}}, \phi_{\mathfrak{g^!}},J,\langle\cdot,\cdot\rangle)$ be a  K\"{a}hler-Norden Hom-Lie algebra). If the left-invariant complex structure $J$ (or, the complex structure $J$) is abelian, then 
	(\ref{L18}) holds. 
\end{proposition}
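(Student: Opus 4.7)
The plan is to verify (\ref{L18}) by pairing the desired identity against $\phi_{\mathfrak{g^!}}(z)$ under the Norden metric and showing that the resulting trilinear form vanishes for every $z\in\mathfrak{g^!}$; non-degeneracy of $\langle\cdot,\cdot\rangle$ and invertibility of $\phi_{\mathfrak{g^!}}$ will then yield the statement. Throughout I would write $K=\phi_{\mathfrak{g^!}}\circ J$ for brevity and rely on three facts: the K\"{a}hler--Norden identity $\nabla_u(Kv)=K\nabla_uv$ from (\ref{L19}); the Norden property $\langle Ku,v\rangle=\langle u,Kv\rangle$ from (\ref{L14}); and the consequence of the abelian hypothesis (\ref{L22}) combined with integrability of $J$ (guaranteed in this context by (\ref{AM11})), namely $[u,Kv]_{\mathfrak{g^!}}=-[Ku,v]_{\mathfrak{g^!}}$, which in particular gives $[Ku,v]_{\mathfrak{g^!}}=[Kv,u]_{\mathfrak{g^!}}$.

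First, combining the torsion-free axiom (i) of (\ref{twoo}) with K\"{a}hler--Norden yields $\nabla_{Kx}y=K\nabla_yx+[Kx,y]_{\mathfrak{g^!}}$, hence
\begin{align*}
\nabla_{Kx}y+K\nabla_xy=K(\nabla_xy+\nabla_yx)+[Kx,y]_{\mathfrak{g^!}}.
\end{align*}
Taking inner product with $\phi_{\mathfrak{g^!}}(z)$, transferring $K$ to the second slot by the Norden property, and applying Koszul's formula (\ref{Koszul}) to the symmetric combination $\nabla_xy+\nabla_yx$ evaluated at $\phi_{\mathfrak{g^!}}(Kz)$ (whose expansion is clean because $[x,y]_{\mathfrak{g^!}}+[y,x]_{\mathfrak{g^!}}=0$), the problem reduces to the cyclic identity
\begin{align*}
(*)\qquad \langle [Kx,y]_{\mathfrak{g^!}},\phi_{\mathfrak{g^!}}(z)\rangle+\langle [Kz,y]_{\mathfrak{g^!}},\phi_{\mathfrak{g^!}}(x)\rangle+\langle [Kx,z]_{\mathfrak{g^!}},\phi_{\mathfrak{g^!}}(y)\rangle=0,
\end{align*}
where I have used $[Kz,x]_{\mathfrak{g^!}}=[Kx,z]_{\mathfrak{g^!}}$ to normalize the last term.

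To establish $(*)$, I would first translate the K\"{a}hler--Norden condition $\nabla_u(Kv)=K\nabla_uv$ into a bracket identity by running Koszul's formula on both sides; after collecting terms and repeatedly applying $[u,Kv]_{\mathfrak{g^!}}=-[Ku,v]_{\mathfrak{g^!}}$, I obtain
\begin{align*}
(K)\qquad \langle H(u,v),\phi_{\mathfrak{g^!}}(w)\rangle+\langle H(u,w),\phi_{\mathfrak{g^!}}(v)\rangle+2\langle [Kw,v]_{\mathfrak{g^!}},\phi_{\mathfrak{g^!}}(u)\rangle=0,
\end{align*}
where $H(u,v)=[Ku,v]_{\mathfrak{g^!}}+K[u,v]_{\mathfrak{g^!}}$. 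Writing $(K)$ for the three choices $(u,v,w)=(x,y,z),(y,x,z),(z,x,y)$ and adding, the symmetric combinations $H(a,b)+H(b,a)$ collapse to $2[Ka,b]_{\mathfrak{g^!}}$ (the $K[\cdot,\cdot]_{\mathfrak{g^!}}$ pieces cancel by antisymmetry of the bracket); together with the $2[K\cdot,\cdot]_{\mathfrak{g^!}}$ terms in $(K)$ and a further use of $[Kb,a]_{\mathfrak{g^!}}=[Ka,b]_{\mathfrak{g^!}}$, the total assembles into exactly four copies of the left-hand side of $(*)$, forcing $(*)=0$ and completing the proof.

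The main obstacle is spotting the correct auxiliary identity $(*)$ and the cyclic-summation trick that deduces it from $(K)$; the rest is careful (but routine) Koszul bookkeeping. The single algebraic mechanism that collapses every mixed term at each stage is the abelian--complex relation $[u,Kv]_{\mathfrak{g^!}}=-[Ku,v]_{\mathfrak{g^!}}$, so its systematic use is essential both in passing from $(K)$ to $(*)$ and in reducing the original Koszul expansion to $(*)$.
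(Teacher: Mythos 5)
Your proof is correct, but it takes a genuinely more computational route than the paper's. Writing $K=\phi_{\mathfrak{g^!}}\circ J$, the paper introduces the same trilinear form $\alpha(x,y,z)=\langle\nabla_{Kx}y+K\nabla_xy,\phi_{\mathfrak{g^!}}(z)\rangle$ but never expands it via Koszul: it shows $\alpha(x,y,z)=-\alpha(x,z,y)$ directly from metric compatibility (ii) of (\ref{twoo}) together with $\nabla K=0$, shows $\alpha(x,y,z)=\alpha(y,x,z)$ from torsion-freeness plus the abelian relation $[Kx,y]+[x,Ky]_{\mathfrak{g^!}}=0$, and then kills $\alpha$ by chasing these two symmetries around the three slots. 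You instead expand $\alpha$ into the bracket identity $(*)$ via Koszul's formula (\ref{Koszul}), manufacture the auxiliary identity $(K)$ from the K\"ahler--Norden condition, and recover $(*)$ by a cyclic sum of three instances of $(K)$; I checked the algebra and it does assemble into four copies of $(*)$. What the paper's argument buys is brevity and no auxiliary lemma; what yours buys is an explicit closed bracket formula for $\nabla_{Kx}y+K\nabla_xy$ (essentially the content of Proposition \ref{L30}) and full transparency about where the abelian hypothesis enters. Two minor remarks: the relation $[u,Kv]_{\mathfrak{g^!}}=-[Ku,v]_{\mathfrak{g^!}}$ follows from (\ref{L22}) and $K^2=-\mathrm{Id}$ alone, so the appeal to integrability via (\ref{AM11}) is superfluous; and your own bracket expansion of $\alpha$ already yields $\alpha(x,y,z)=\alpha(x,z,y)$ under the abelian hypothesis, which combined with the paper's antisymmetry in the last two slots gives $\alpha=0$ immediately --- a shortcut that would let you dispense with the identity $(K)$ entirely.
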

\begin{proof}
For any $x,y,z\in\mathfrak{g^!}$,  setting $\alpha(x,y,z)=\langle (\nabla_{({ \phi_\mathfrak{g^!}}\circ J)x}y+({ \phi_\mathfrak{g^!}}\circ J)\nabla_xy, \phi_\mathfrak{g^!}(z)\rangle$ and using (ii) of (\ref{twoo}), we get
\begin{align*}
\alpha(x,y,z)=-\langle \nabla_{({ \phi_\mathfrak{g^!}}\circ J)x} z,\phi_\mathfrak{g^!}(y)\rangle+\langle \nabla_x y,(\phi_\mathfrak{g^!}\circ J) \phi_\mathfrak{g^!}(z)\rangle=-\langle \nabla_{({ \phi_\mathfrak{g^!}}\circ J)x} z+ \nabla_x (\phi_\mathfrak{g^!}\circ J)z, \phi_\mathfrak{g^!}(y)\rangle.
\end{align*}
According to (\ref{L19}), the above equation gives 
\begin{align*}
\alpha(x,y,z)=-\langle \nabla_{({ \phi_\mathfrak{g^!}}\circ J)x} z+ (\phi_\mathfrak{g^!}\circ J)(\nabla_x z), \phi_\mathfrak{g^!}(y)\rangle,
\end{align*}
from which it follows
\begin{align}\label{L24}
\alpha(x,y,z)=-\alpha(x,z,y).
\end{align}
As $J$ is abelian, from (i) of (\ref{twoo}) and (\ref{L22}), we have
\[
(\nabla_{({ \phi_\mathfrak{g^!}}\circ J)x}+({ \phi_\mathfrak{g^!}}\circ J) \nabla_x)y=
(\nabla_{({ \phi_\mathfrak{g^!}}\circ J)y}+({ \phi_\mathfrak{g^!}}\circ J)\nabla_y)x,
\]
hence $\alpha$ is symmetric in the first two variable
\begin{align}\label{L26}
\alpha(x,y,z)=\alpha(y,x,z).
\end{align}
By (\ref{L24}) and (\ref{L26}), we conclude that 
\begin{align*}
\alpha(x,y,z)=-\alpha(x,z,y)=-\alpha(z,x,y)=\alpha(z,y,x).
\end{align*}
The last equation and (\ref{L24}) imply
\begin{align*}
\alpha(x,y,z)=\alpha(y,x,z)=\alpha(z,x,y)=\alpha(x,z,y).
\end{align*}
(\ref{L24}) and the above equation deduce that $\alpha=0$,  hence the proof completes. 
	\end{proof}
\begin{proposition}\label{L30}
	Assume that $(G, \diamond, e_\Phi, \Phi, J, \langle\cdot, \cdot\rangle)$ is a left-invariant K\"{a}hler-Norden Hom-Lie group (or, $(\mathfrak{g^!}, [\cdot, \cdot]_{\mathfrak{g^!}}, \phi_{\mathfrak{g^!}},J,\langle\cdot,\cdot\rangle)$ is a  K\"{a}hler-Norden Hom-Lie algebra) such that the left-invariant complex structure $J$ (or, the complex structure $J$) is abelian. Then for the left-invariant Hom-Levi-Civita connection $\nabla$ (or, the Hom-Levi-Civita connection $\nabla$), we have 
	\begin{align}\label{L92}
	2\nabla_xy=[x,y]_{\mathfrak{g^!}}-({ \phi_\mathfrak{g^!}}\circ J)[x,({ \phi_\mathfrak{g^!}}\circ J)y]_{\mathfrak{g^!}},
	\end{align} 
	for any $x,y\in \mathfrak{g^!}$.
\end{proposition}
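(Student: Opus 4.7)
The plan is to expand $[x,(\phi_{\mathfrak{g^!}}\circ J)y]_{\mathfrak{g^!}}$ using the symmetry of the Hom-Levi-Civita connection (part (i) of (\ref{twoo})), then use the K\"ahler-Norden condition $\nabla(\phi_{\mathfrak{g^!}}\circ J)=0$ together with the abelian-commutation identity (\ref{L18}) supplied by Proposition \ref{L29} to move $\phi_{\mathfrak{g^!}}\circ J$ across the connection. After applying $(\phi_{\mathfrak{g^!}}\circ J)^2=-Id_{\mathfrak{g^!}}$ to clean up signs, the desired identity should drop out by adding the result to the ordinary bracket formula $[x,y]_{\mathfrak{g^!}}=\nabla_xy-\nabla_yx$.

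First I would compute, for $x,y\in\mathfrak{g^!}$,
\[
[x,(\phi_{\mathfrak{g^!}}\circ J)y]_{\mathfrak{g^!}}=\nabla_x(\phi_{\mathfrak{g^!}}\circ J)y-\nabla_{(\phi_{\mathfrak{g^!}}\circ J)y}x.
\]
The K\"ahler-Norden hypothesis (\ref{L19}) rewrites the first term as $(\phi_{\mathfrak{g^!}}\circ J)\nabla_xy$. Since $J$ is abelian, Proposition \ref{L29} gives (\ref{L18}), i.e.\ $\nabla_{(\phi_{\mathfrak{g^!}}\circ J)y}=-(\phi_{\mathfrak{g^!}}\circ J)\nabla_y$, so the second term equals $(\phi_{\mathfrak{g^!}}\circ J)\nabla_yx$. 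Hence
\[
[x,(\phi_{\mathfrak{g^!}}\circ J)y]_{\mathfrak{g^!}}=(\phi_{\mathfrak{g^!}}\circ J)(\nabla_xy+\nabla_yx).
\]

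Next I would apply $-(\phi_{\mathfrak{g^!}}\circ J)$ to both sides and use $(\phi_{\mathfrak{g^!}}\circ J)^2=-Id_{\mathfrak{g^!}}$, obtaining
\[
-(\phi_{\mathfrak{g^!}}\circ J)[x,(\phi_{\mathfrak{g^!}}\circ J)y]_{\mathfrak{g^!}}=\nabla_xy+\nabla_yx.
\]
Adding $[x,y]_{\mathfrak{g^!}}=\nabla_xy-\nabla_yx$ yields $2\nabla_xy$, which is precisely (\ref{L92}).

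I do not expect a serious obstacle here: the argument is a short manipulation that only uses the four ingredients (torsion-free, parallel $\phi_{\mathfrak{g^!}}\circ J$, the abelian-derived identity (\ref{L18}), and $(\phi_{\mathfrak{g^!}}\circ J)^2=-Id_{\mathfrak{g^!}}$). The one place where care is needed is to make sure (\ref{L18}) is legitimately available, but this is exactly what Proposition \ref{L29} delivers under the abelian assumption on $J$, so it can be invoked directly.
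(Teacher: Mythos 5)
Your proof is correct and uses exactly the same ingredients as the paper's: torsion-freeness of $\nabla$, the parallelism $\nabla(\phi_{\mathfrak{g^!}}\circ J)=0$, and the identity (\ref{L18}) supplied by Proposition \ref{L29} under the abelian hypothesis. The only cosmetic difference is that you expand $[x,(\phi_{\mathfrak{g^!}}\circ J)y]_{\mathfrak{g^!}}$ directly, whereas the paper expands $[(\phi_{\mathfrak{g^!}}\circ J)x,y]_{\mathfrak{g^!}}-(\phi_{\mathfrak{g^!}}\circ J)[x,y]_{\mathfrak{g^!}}$ and then converts brackets using the abelian condition; your route is marginally more direct but not a genuinely different argument.
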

\begin{proof}
Using (i) of (\ref{twoo}) and (\ref{L19}), we have 
	\begin{align*}
	[({ \phi_\mathfrak{g^!}}\circ J)x,y]_{\mathfrak{g^!}}-({ \phi_\mathfrak{g^!}}\circ J)[x,y]_{\mathfrak{g^!}}=&
\nabla_{	({ \phi_\mathfrak{g^!}}\circ J)x} y-\nabla_y ({ \phi_\mathfrak{g^!}}\circ J)x-({ \phi_\mathfrak{g^!}}\circ J)(\nabla_x y-\nabla_y x)\\
	=&\nabla_{({ \phi_\mathfrak{g^!}}\circ J)x} y-({ \phi_\mathfrak{g^!}}\circ J)(\nabla_x y).
	\end{align*}
	Applying (\ref{L29}) and $({ \phi_\mathfrak{g}}\circ J)^2=-Id_{\mathfrak{g^!}}$ in the above equation, we conclude the assertion. 
	\end{proof}
\begin{example}
We consider the  K\"{a}hler-Norden Hom-Lie algebra $(\mathfrak{g}, [\cdot, \cdot]_\mathfrak{g}, \phi_{\mathfrak{g}},J,\langle\cdot,\cdot\rangle)$ presented in Example 	(\ref{L98}).
As
\begin{align*}
&[({ \phi_\mathfrak{g}}\circ J)e_1,({ \phi_\mathfrak{g}}\circ J)e_2]_\mathfrak{g}=e_3\neq[e_1,e_2]_\mathfrak{g},\ \ \ \ \ \ \ \ \ [({ \phi_\mathfrak{g}}\circ J)e_1,({ \phi_\mathfrak{g}}\circ J)e_3]_\mathfrak{g}=-e_2\neq[e_1,e_3]_\mathfrak{g},\\
& [({ \phi_\mathfrak{g}}\circ J)e_2,({ \phi_\mathfrak{g}}\circ J)e_4]_\mathfrak{g}=-e_2\neq[e_2,e_4]_\mathfrak{g}, \ \ \ \ \ \ \  [({ \phi_\mathfrak{g}}\circ J)e_3,({ \phi_\mathfrak{g}}\circ J)e_4]_\mathfrak{g}=-e_3\neq[e_3,e_4]_\mathfrak{g},
\end{align*}
So the complex structure $J$ is not abelian.
\end{example}
\begin{example}\label{L94}
For the  K\"{a}hler-Norden Hom-Lie algebra $(\mathfrak{g}, [\cdot, \cdot]_\mathfrak{g}, \phi_{\mathfrak{g}},J,\langle\cdot,\cdot\rangle)$ in Example (\ref{L91}), we have
\begin{align*}
&[({ \phi_\mathfrak{g}}\circ J)e_3,({ \phi_\mathfrak{g}}\circ J)e_5]_\mathfrak{g}=-e_2=[e_3,e_5]_\mathfrak{g},\ \ \ \ \ [({ \phi_\mathfrak{g}}\circ J)e_3,({ \phi_\mathfrak{g}}\circ J)e_6]_\mathfrak{g}=e_1=[e_3,e_6]_\mathfrak{g},\\
& [({ \phi_\mathfrak{g}}\circ J)e_4,({ \phi_\mathfrak{g}}\circ J)e_5]_\mathfrak{g}=e_1=[e_4,e_5]_\mathfrak{g}, \ \ \ \ \ \ \  [({ \phi_\mathfrak{g}}\circ J)e_4,({ \phi_\mathfrak{g}}\circ J)e_6]_\mathfrak{g}=e_2=[e_4,e_6]_\mathfrak{g},\\
 &
\ \ \ \ \ \ \ \ \ \ \ \ \ \ \ \ \ \ \ \ \ \ \ \ \ [({ \phi_\mathfrak{g}}\circ J)e_5,({ \phi_\mathfrak{g}}\circ J)e_6]_\mathfrak{g}=e_3=[e_5,e_6]_\mathfrak{g},
\end{align*}
which give that  $J$ satisfies  in  (\ref{L22}), i.e.,  the complex structure $J$ is abelian. We get that $[e_i,e_j]_\mathfrak{g}-({ \phi_\mathfrak{g}}\circ J)[e_i,({ \phi_\mathfrak{g}}\circ J)e_j]_\mathfrak{g}=0$, for all $i,j=1,\cdots,6$, expect 
	\begin{align*}
&[e_5,e_3]_\mathfrak{g}-({ \phi_\mathfrak{g}}\circ J)[e_5,({ \phi_\mathfrak{g}}\circ J)e_3]_\mathfrak{g}=2e_2,\ \ \ \ \ \ \  [e_5,e_4]_\mathfrak{g}-({ \phi_\mathfrak{g}}\circ J)[e_5,({ \phi_\mathfrak{g}}\circ J)e_4]_\mathfrak{g}=-2e_1,\\
&[e_5,e_5]_\mathfrak{g}-({ \phi_\mathfrak{g}}\circ J)[e_5,({ \phi_\mathfrak{g}}\circ J)e_5]_\mathfrak{g}=e_4,\ \ \ \ \ \ \ \ \  [e_5,e_6]_\mathfrak{g}-({ \phi_\mathfrak{g}}\circ J)[e_5,({ \phi_\mathfrak{g}}\circ J)e_6]_\mathfrak{g}=e_3,\\
&[e_6,e_3]_\mathfrak{g}-({ \phi_\mathfrak{g}}\circ J)[e_6,({ \phi_\mathfrak{g}}\circ J)e_3]_\mathfrak{g}=-2e_1,\ \ \ \ \  [e_6,e_4]_\mathfrak{g}-({ \phi_\mathfrak{g}}\circ J)[e_6,({ \phi_\mathfrak{g}}\circ J)e_4]_\mathfrak{g}=-2e_2,\\
&[e_6,e_5]_\mathfrak{g}-({ \phi_\mathfrak{g}}\circ J)[e_6,({ \phi_\mathfrak{g}}\circ J)e_5]_\mathfrak{g}=-e_3,\ \ \ \ \ \ \  [e_6,e_6]_\mathfrak{g}-({ \phi_\mathfrak{g}}\circ J)[e_6,({ \phi_\mathfrak{g}}\circ J)e_6]_\mathfrak{g}=e_4.
\end{align*} 
 Applying  (\ref{L93}) and the above equations, we conclude that (\ref{L92}) holds.
\end{example}
%%%%%%%%%%%%%%%%%%%%%%%%%%%%%%%%%%%%%%%%%%%%%%%%%%%%%%%%%%%%%%%%%%%%%%%%%%%%%%%
\subsection{Twin Norden metric of  Hom-Lie groups and Hom-Lie algebras}

In this section, we restrict ourselves to the case that $\langle\cdot,\cdot\rangle$ is a left-invariant Norden metric (or, a Norden metric) on a left-invariant almost complex Hom-Lie group $(G, \diamond, e_\Phi, \Phi)$ (or, an  almost complex Hom-Lie algebra  $(\mathfrak{g^!}, [\cdot, \cdot]_{\mathfrak{g^!}}, \phi_{\mathfrak{g^!}})$).  So we can apply $J$ to obtain a new left-invariant Norden metric (or, a Norden metric) $\ll\cdot,\cdot\gg$
associated with the left-invariant Norden metric (or, the Norden metric) $\langle\cdot,\cdot\rangle$ of the left-invariant almost Norden Hom-Lie group $(G, \diamond, e_\Phi, \Phi, J, \langle\cdot, \cdot\rangle)$ (or, the almost Norden Hom-Lie algebra $(\mathfrak{g^!}, [\cdot, \cdot]_{\mathfrak{g^!}}, \phi_\mathfrak{g^!}, J, \langle\cdot,\cdot\rangle)$)  defined by
\[
\ll x,y\gg=\langle ({ \phi_\mathfrak{g^!}}\circ J)x,y\rangle,\ \ \ \ \ 
\]
for any $x,y\in { \mathfrak{g^!}}$. In this case, the pure tensor  $\ll\cdot,\cdot\gg$ is called the twin metric of $\langle\cdot,\cdot\rangle$.
\begin{proposition}\label{L21}
	Let $(  J, \langle\cdot,\cdot\rangle)$ be a left-invariant almost Norden structure on a Hom-Lie group $(G, \diamond, e_\Phi, \Phi)$ (or, an almost  Norden  structure on a Hom-Lie algebra $(\mathfrak{g^!}, [\cdot, \cdot]_{\mathfrak{g^!}}, \phi_\mathfrak{g^!})$) and $\ll\cdot,\cdot\gg=\langle ({ \phi_\mathfrak{g^!}}\circ J)\cdot,\cdot\rangle$. Then the following identities hold
	\begin{align*}
&(i)\ ({\Phi}_{_{{ \phi_\mathfrak{g^!}}\circ J}}\ll\cdot,\cdot\gg)(x,y,z)=({\Phi}_{_{{ \phi_\mathfrak{g^!}}\circ J}}\langle\cdot,\cdot\rangle)(x, ({ \phi_\mathfrak{g^!}}\circ J)y,z)+\langle N_{ { \phi_\mathfrak{g^!}}\circ J}(x,y),{ \phi_\mathfrak{g^!}}(z)\rangle,\\
&(ii)\ (\nabla_x\ll\cdot, \cdot\gg)(y,z)=(\nabla_x\langle\cdot,\cdot \rangle)(({ \phi_\mathfrak{g^!}}\circ J)y,z)+\langle(	\nabla_x ({ \phi_\mathfrak{g^!}}\circ J))y,{ \phi_\mathfrak{g^!}}(z)\rangle,
	\end{align*}
	for any $x,y,z\in \mathfrak{g^!}$. 
\end{proposition}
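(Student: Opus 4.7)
The plan for both parts is the same in spirit: expand every occurrence of $\ll\cdot,\cdot\gg$ in terms of $\langle\cdot,\cdot\rangle$ via the definition $\ll u,v\gg=\langle(\phi_{\mathfrak{g^!}}\circ J)u,v\rangle$, then match the two sides using the Norden symmetry $\langle(\phi_{\mathfrak{g^!}}\circ J)\cdot,\cdot\rangle=\langle\cdot,(\phi_{\mathfrak{g^!}}\circ J)\cdot\rangle$ (equation (\ref{L14})), the identity $(\phi_{\mathfrak{g^!}}\circ J)^2=-\mathrm{Id}$, and the commutativity $\phi_{\mathfrak{g^!}}\circ J=J\circ\phi_{\mathfrak{g^!}}$. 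No identity beyond these, together with the explicit formulas (\ref{L3}) for $\Phi_{\phi_{\mathfrak{g^!}}\circ J}$, (\ref{L1}) for the total covariant derivative, and the definition of $N_{\phi_{\mathfrak{g^!}}\circ J}$ from (\ref{M1}), is needed.

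I would handle (ii) first since it is essentially algebraic. Specializing (\ref{L1}) to the $(0,2)$-tensor $\ll\cdot,\cdot\gg$ gives
\[
(\nabla_x\ll\cdot,\cdot\gg)(y,z)=-\ll\nabla_xy,\phi_{\mathfrak{g^!}}(z)\gg-\ll\phi_{\mathfrak{g^!}}(y),\nabla_xz\gg,
\]
and substituting the twin metric definition turns this into $-\langle(\phi_{\mathfrak{g^!}}\circ J)\nabla_xy,\phi_{\mathfrak{g^!}}(z)\rangle-\langle(\phi_{\mathfrak{g^!}}\circ J)\phi_{\mathfrak{g^!}}(y),\nabla_xz\rangle$. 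On the right-hand side I would expand $(\nabla_x\langle\cdot,\cdot\rangle)((\phi_{\mathfrak{g^!}}\circ J)y,z)$ by the same rule, expand $(\nabla_x(\phi_{\mathfrak{g^!}}\circ J))y=\nabla_x((\phi_{\mathfrak{g^!}}\circ J)y)-(\phi_{\mathfrak{g^!}}\circ J)\nabla_xy$ by the very definition stated in Theorem \ref{MA8}, and use $\phi_{\mathfrak{g^!}}\circ(\phi_{\mathfrak{g^!}}\circ J)=(\phi_{\mathfrak{g^!}}\circ J)\circ\phi_{\mathfrak{g^!}}$ on the second argument. The two terms containing $\nabla_x((\phi_{\mathfrak{g^!}}\circ J)y)$ cancel, leaving exactly the left-hand side.

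For (i), the same philosophy applies but the bookkeeping is heavier. Specializing (\ref{L3}) to $q=2$ gives, for any pure $(0,2)$-tensor $\omega$,
\[
(\Phi_{\phi_{\mathfrak{g^!}}\circ J}\omega)(x,y,z)=\omega([y,(\phi_{\mathfrak{g^!}}\circ J)x]_{\mathfrak{g^!}}-(\phi_{\mathfrak{g^!}}\circ J)[y,x]_{\mathfrak{g^!}},\phi_{\mathfrak{g^!}}(z))+\omega(\phi_{\mathfrak{g^!}}(y),[z,(\phi_{\mathfrak{g^!}}\circ J)x]_{\mathfrak{g^!}}-(\phi_{\mathfrak{g^!}}\circ J)[z,x]_{\mathfrak{g^!}}).
\]
Applied to $\omega=\ll\cdot,\cdot\gg$ the left-hand side becomes, after using $(\phi_{\mathfrak{g^!}}\circ J)^2=-\mathrm{Id}$ to simplify the term $(\phi_{\mathfrak{g^!}}\circ J)^2[y,x]_{\mathfrak{g^!}}$, a sum of four $\langle\cdot,\cdot\rangle$-terms. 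On the right, $(\Phi_{\phi_{\mathfrak{g^!}}\circ J}\langle\cdot,\cdot\rangle)(x,(\phi_{\mathfrak{g^!}}\circ J)y,z)$ expands to two terms (one of which I would rewrite via the bracket antisymmetry $[(\phi_{\mathfrak{g^!}}\circ J)y,(\phi_{\mathfrak{g^!}}\circ J)x]_{\mathfrak{g^!}}=-[(\phi_{\mathfrak{g^!}}\circ J)x,(\phi_{\mathfrak{g^!}}\circ J)y]_{\mathfrak{g^!}}$), while $\langle N_{\phi_{\mathfrak{g^!}}\circ J}(x,y),\phi_{\mathfrak{g^!}}(z)\rangle$ expands to four terms from (\ref{M1}). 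The bracket $[(\phi_{\mathfrak{g^!}}\circ J)x,(\phi_{\mathfrak{g^!}}\circ J)y]_{\mathfrak{g^!}}$ coming from the Nijenhuis tensor cancels the corresponding contribution of $\Phi_{\phi_{\mathfrak{g^!}}\circ J}\langle\cdot,\cdot\rangle$, and the three $(\phi_{\mathfrak{g^!}}\circ J)$-mixed brackets combine (via antisymmetry) to reproduce the $\langle(\phi_{\mathfrak{g^!}}\circ J)[y,(\phi_{\mathfrak{g^!}}\circ J)x]_{\mathfrak{g^!}},\phi_{\mathfrak{g^!}}(z)\rangle$ term on the left; the remaining $\langle[y,x]_{\mathfrak{g^!}},\phi_{\mathfrak{g^!}}(z)\rangle$ and $\langle[x,y]_{\mathfrak{g^!}},\phi_{\mathfrak{g^!}}(z)\rangle$ are negatives of each other. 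The $\phi_{\mathfrak{g^!}}(y)$-slot contributions match directly once one notes $\phi_{\mathfrak{g^!}}\circ(\phi_{\mathfrak{g^!}}\circ J)=(\phi_{\mathfrak{g^!}}\circ J)\circ\phi_{\mathfrak{g^!}}$.

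The main obstacle is purely combinatorial: in (i) one has seven bracket terms on the right and four on the left, and the cancellation relies on simultaneously using the Jacobi-free bracket antisymmetry, the Norden property, and $(\phi_{\mathfrak{g^!}}\circ J)^2=-\mathrm{Id}$ in the right order. I would therefore organize the argument by isolating the two slots (the $\phi_{\mathfrak{g^!}}(y)$-slot and the $\phi_{\mathfrak{g^!}}(z)$-slot) of the formula (\ref{L3}) separately, verifying the $\phi_{\mathfrak{g^!}}(y)$-slot identity first (which is immediate from commutativity of $\phi_{\mathfrak{g^!}}$ with $\phi_{\mathfrak{g^!}}\circ J$) and then collecting the six remaining bracket terms in the $\phi_{\mathfrak{g^!}}(z)$-slot into the desired shape.
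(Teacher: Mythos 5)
Your proposal is correct and follows essentially the same route as the paper: both parts are proved by expanding $\ll\cdot,\cdot\gg=\langle(\phi_{\mathfrak{g^!}}\circ J)\cdot,\cdot\rangle$ inside the formulas (\ref{L3}) and (\ref{L1}) and matching terms using $(\phi_{\mathfrak{g^!}}\circ J)^2=-Id$, the commutation $\phi_{\mathfrak{g^!}}\circ(\phi_{\mathfrak{g^!}}\circ J)=(\phi_{\mathfrak{g^!}}\circ J)\circ\phi_{\mathfrak{g^!}}$, and bracket antisymmetry. The paper packages the term-matching as an ``add and subtract'' step (of $\langle(\phi_{\mathfrak{g^!}}\circ J)[x,(\phi_{\mathfrak{g^!}}\circ J)y]_{\mathfrak{g^!}}+[(\phi_{\mathfrak{g^!}}\circ J)x,(\phi_{\mathfrak{g^!}}\circ J)y]_{\mathfrak{g^!}},\phi_{\mathfrak{g^!}}(z)\rangle$ in (i), of $\nabla_x(\phi_{\mathfrak{g^!}}\circ J)y$ in (ii)), which is the same computation you carry out by expanding both sides.
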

\begin{proof}
	Using (\ref{L3}), we have
 	\begin{align*}
 	({\Phi}_{_{{ \phi_\mathfrak{g^!}}\circ J}}\ll\cdot,\cdot\gg)(x,y,z)=&\ll[y,({ \phi_\mathfrak{g^!}}\circ J)(x)]_{\mathfrak{g^!}}-({ \phi_\mathfrak{g^!}}\circ J)[y,x]_{\mathfrak{g^!}},
 	{ \phi_\mathfrak{g^!}}(z) \gg\\
 	&+
 	\ll{ \phi_\mathfrak{g^!}}(y) ,[z,({ \phi_\mathfrak{g^!}}\circ J)(x)]_{\mathfrak{g^!}}-({ \phi_\mathfrak{g^!}}\circ J)[z,x]_{\mathfrak{g^!}}
 	\gg.
 \end{align*}
 The above equation leads to
 	\begin{align*}
 ({\Phi}_{_{{ \phi_\mathfrak{g^!}}\circ J}}\ll\cdot,\cdot\gg)(x,y,z)=&\langle({ \phi_\mathfrak{g^!}}\circ J) [y,({ \phi_\mathfrak{g^!}}\circ J)(x)]_{\mathfrak{g^!}}-({ \phi_\mathfrak{g^!}}\circ J)[y,x]_{\mathfrak{g^!}},
 { \phi_\mathfrak{g^!}}(z) \rangle\\
 &+
 \langle ({ \phi_\mathfrak{g^!}}\circ J){ \phi_\mathfrak{g^!}}(y) ,[z,({ \phi_\mathfrak{g^!}}\circ J)(x)]_{\mathfrak{g^!}}-({ \phi_\mathfrak{g^!}}\circ J)[z,x]_{\mathfrak{g^!}}
 \rangle.
 \end{align*}
By adding and subtracting $ \langle({ \phi_\mathfrak{g^!}}\circ J) [x,({ \phi_\mathfrak{g^!}}\circ J)(y)]_{\mathfrak{g^!}}+[({ \phi_\mathfrak{g^!}}\circ J)x,({ \phi_\mathfrak{g^!}}\circ J)(y)]_{\mathfrak{g^!}}, { \phi_\mathfrak{g^!}}(z) \rangle$ in the above equation,
we conclude (i). From (\ref{L1}),  we have
\begin{align*}
(\nabla_x\ll\cdot, \cdot\gg)(y,z)=-\ll \nabla_x y,{ \phi_\mathfrak{g^!}}(z)\gg-\ll{ \phi_\mathfrak{g^!}}(y),\nabla_xz\gg,
\end{align*}
from which it follows 
\begin{align*}
(\nabla_x\ll\cdot,\cdot\gg)(y,z)=&-\langle ({ \phi_\mathfrak{g^!}}\circ J)(\nabla_x y),{ \phi_\mathfrak{g^!}}(z)\rangle-\langle({ \phi_\mathfrak{g^!}}\circ J)({ \phi_\mathfrak{g^!}}(y)),\nabla_x z\rangle\\
=&-\langle ({ \phi_\mathfrak{g^!}}\circ J)(\nabla_x y)+\nabla_x ({ \phi_\mathfrak{g^!}}\circ J)y-\nabla_x ({ \phi_\mathfrak{g^!}}\circ J)y,{ \phi_\mathfrak{g^!}}(z)\rangle-\langle({ \phi_\mathfrak{g^!}}\circ J)({ \phi_\mathfrak{g^!}}(y)),\nabla_x z\rangle.
\end{align*}
Applying (\ref{L1}) in the last equation, (ii) yields.
\end{proof}
%	\begin{corollary}
%	On the K\"{a}hler-Norden statistical manifold $(M, \nabla, g, J)$,  
%	\end{corollary}
As an immediate consequence of formula (i), we obtain that
in a left-invariant Norden  Hom-Lie group $(G, \diamond, e_\Phi, \Phi, J, \langle\cdot, \cdot\rangle)$ (or, a Norden Hom-Lie algebra $(\mathfrak{g^!}, [\cdot, \cdot]_{\mathfrak{g^!}}, \phi_{\mathfrak{g^!}},J,\langle\cdot,\cdot\rangle)$),  ${\Phi}_{_{{ \phi_\mathfrak{g^!}}\circ J}}\langle\cdot,\cdot\rangle=0$ if and only if ${\Phi}_{_{{ \phi_\mathfrak{g^!}}\circ J}}\ll\cdot,\cdot\gg=0$.
%%%%%%%%%%%%%%%%%%%%%%%%%%%%%%%%%%%%%%%%%%%%
\section{Left-invariant holomorphic Norden Hom-lie groups (holomorphic Norden Hom-Lie algebras)}
In this Section,
we give the definition of a left-invariant holomorphic Norden Hom-Lie group (or, a holomorphic Norden Hom-Lie algebra).
We show  that there exist a one-to-one correspondence between left invariant K\"{a}hler-Norden Hom-Lie groups (or, K\"{a}hler-Norden Hom-Lie algebras) and left-invariant holomorphic Norden Hom-Lie groups (or, holomorphic Norden Hom-Lie algebras).
 \begin{proposition}\label{MA88}
	Let $(G, \diamond, e_\Phi, \Phi, J, \langle\cdot, \cdot\rangle)$ be a left-invariant almost Norden Hom-Lie group (or, $( \mathfrak{g^!}, [\cdot, \cdot]_{\mathfrak{g^!}}, \phi_\mathfrak{g^!},J,\langle\cdot,\cdot\rangle)$ be an  almost Norden  Hom-Lie algebra). Then 
	\begin{align*}
	&(i)\ \ ({\Phi}_{_{{ \phi_\mathfrak{g^!}}\circ J}}\langle\cdot,\cdot\rangle)(x, y,z)=\langle (	\nabla_y( { \phi_\mathfrak{g^!}}\circ J))x,{ \phi_\mathfrak{g^!}}(z)\rangle+\langle { \phi_\mathfrak{g^!}}(y),  (	\nabla_z({ \phi_\mathfrak{g^!}}\circ J))x\rangle-
	\langle  (	\nabla_x({ \phi_\mathfrak{g^!}}\circ J))(y),{ \phi_\mathfrak{g^!}}z\rangle,\\
	&(ii)\ ({\Phi}_{_{{ \phi_\mathfrak{g^!}}\circ J}}\langle\cdot,\cdot\rangle)(x, y,z)+({\Phi}_{_{{ \phi_\mathfrak{g^!}}\circ J}}\langle\cdot,\cdot\rangle)( z,y,x)=2\langle (	\nabla_y ({ \phi_\mathfrak{g^!}}\circ J))x,{ \phi_\mathfrak{g^!}}(z)\rangle,
	\end{align*}
	for any $x,y,z\in \mathfrak{g^!}$,  where $\nabla$  is the left-invariant Hom-Levi-Civita connection on $G$ (or, the Hom-Levi-Civita connection on $\mathfrak{g^!}$). 
\end{proposition}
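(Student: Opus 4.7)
The plan is to compute both identities by unpacking the definition of the Tachibana-type operator $\Phi_{\phi_{\mathfrak{g^!}}\circ J}$ given in equation (\ref{L3}) when applied to the pure $(0,2)$-tensor $\langle\cdot,\cdot\rangle$, and then translating Lie brackets into covariant derivatives via property (i) of (\ref{twoo}). Concretely, for (i), I would write
\[
(\Phi_{\phi_{\mathfrak{g^!}}\circ J}\langle\cdot,\cdot\rangle)(x,y,z)=\langle[y,(\phi_{\mathfrak{g^!}}\circ J)x]_{\mathfrak{g^!}}-(\phi_{\mathfrak{g^!}}\circ J)[y,x]_{\mathfrak{g^!}},\phi_{\mathfrak{g^!}}(z)\rangle+\langle\phi_{\mathfrak{g^!}}(y),[z,(\phi_{\mathfrak{g^!}}\circ J)x]_{\mathfrak{g^!}}-(\phi_{\mathfrak{g^!}}\circ J)[z,x]_{\mathfrak{g^!}}\rangle,
\]
and then substitute $[a,b]_{\mathfrak{g^!}}=\nabla_a b-\nabla_b a$ in every bracket. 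Grouping terms so that $\nabla_y(\phi_{\mathfrak{g^!}}\circ J)x-(\phi_{\mathfrak{g^!}}\circ J)\nabla_y x$ (resp.\ with $z$) appears, the first two factors produce exactly $(\nabla_y(\phi_{\mathfrak{g^!}}\circ J))x$ and $(\nabla_z(\phi_{\mathfrak{g^!}}\circ J))x$, giving the first two terms of the right-hand side.

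The remaining terms involve $\nabla_{(\phi_{\mathfrak{g^!}}\circ J)x}y$, $\nabla_{(\phi_{\mathfrak{g^!}}\circ J)x}z$ and $(\phi_{\mathfrak{g^!}}\circ J)\nabla_x y$, $(\phi_{\mathfrak{g^!}}\circ J)\nabla_x z$. The first pair cancels immediately by compatibility (ii) of (\ref{twoo}) applied to $\nabla_{(\phi_{\mathfrak{g^!}}\circ J)x}$, since
\[
\langle\nabla_{(\phi_{\mathfrak{g^!}}\circ J)x}y,\phi_{\mathfrak{g^!}}(z)\rangle+\langle\phi_{\mathfrak{g^!}}(y),\nabla_{(\phi_{\mathfrak{g^!}}\circ J)x}z\rangle=0.
\]
For the second pair, I use the Norden symmetry (\ref{L14}) together with compatibility (ii) of (\ref{twoo}) applied to $\nabla_x(\phi_{\mathfrak{g^!}}\circ J)y$, and the definition of $(\nabla_x(\phi_{\mathfrak{g^!}}\circ J))y$, to rewrite
\[
\langle(\phi_{\mathfrak{g^!}}\circ J)\nabla_x y,\phi_{\mathfrak{g^!}}(z)\rangle+\langle\phi_{\mathfrak{g^!}}(y),(\phi_{\mathfrak{g^!}}\circ J)\nabla_x z\rangle=-\langle(\nabla_x(\phi_{\mathfrak{g^!}}\circ J))y,\phi_{\mathfrak{g^!}}(z)\rangle,
\]
which is precisely the third term in (i). The only subtle step here is keeping track of signs when using $(\phi_{\mathfrak{g^!}}\circ J)^2=-\mathrm{Id}$, and making sure the Norden compatibility is used on the correct factor.

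For (ii), the strategy is simply to apply (i) at $(x,y,z)$ and at $(z,y,x)$ and sum. Using Theorem \ref{MA8}, I have the two symmetry relations
\[
\langle\phi_{\mathfrak{g^!}}(y),(\nabla_x(\phi_{\mathfrak{g^!}}\circ J))z\rangle=\langle(\nabla_x(\phi_{\mathfrak{g^!}}\circ J))y,\phi_{\mathfrak{g^!}}(z)\rangle,\qquad \langle(\nabla_y(\phi_{\mathfrak{g^!}}\circ J))z,\phi_{\mathfrak{g^!}}(x)\rangle=\langle(\nabla_y(\phi_{\mathfrak{g^!}}\circ J))x,\phi_{\mathfrak{g^!}}(z)\rangle.
\]
Substituting these into the expression for $(\Phi_{\phi_{\mathfrak{g^!}}\circ J}\langle\cdot,\cdot\rangle)(z,y,x)$, the negative term $-\langle(\nabla_x(\phi_{\mathfrak{g^!}}\circ J))y,\phi_{\mathfrak{g^!}}(z)\rangle$ of (i)$(x,y,z)$ cancels with the corresponding positive term coming from the middle factor of (i)$(z,y,x)$, and analogously for the third factor; the two remaining identical terms combine to $2\langle(\nabla_y(\phi_{\mathfrak{g^!}}\circ J))x,\phi_{\mathfrak{g^!}}(z)\rangle$, as required.

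The main obstacle is bookkeeping in the first step: there are six terms after expanding the brackets, and recognizing which three combine into the curvature-of-structure expressions $(\nabla_\cdot(\phi_{\mathfrak{g^!}}\circ J))\cdot$ while the other three cancel (pairwise) via the metric compatibility and the Norden purity requires care with the position of $\phi_{\mathfrak{g^!}}\circ J$. Once (i) is in hand, part (ii) is a formal symmetrization controlled entirely by Theorem \ref{MA8}.
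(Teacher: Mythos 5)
Your proposal is correct and follows essentially the same route as the paper: expand $\Phi_{\phi_{\mathfrak{g^!}}\circ J}\langle\cdot,\cdot\rangle$ via (\ref{L3}), replace brackets using the torsion-free property (i) of (\ref{twoo}), cancel the $\nabla_{(\phi_{\mathfrak{g^!}}\circ J)x}$-terms by metric compatibility, convert the remaining pair into $-\langle(\nabla_x(\phi_{\mathfrak{g^!}}\circ J))y,\phi_{\mathfrak{g^!}}(z)\rangle$ via the Norden purity (\ref{L14}) plus (ii) of (\ref{twoo}), and then obtain (ii) by symmetrizing with Theorem \ref{MA8}. No gaps.
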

\begin{proof}
	From (\ref{L3}), we have
	\begin{align*}
	({\Phi}_{_{{ \phi_\mathfrak{g^!}}\circ J}}\langle\cdot,\cdot\rangle)(x, y,z)=
	\langle[y,({ \phi_\mathfrak{g^!}}\circ J)(x)]_{\mathfrak{g^!}}-({ \phi_\mathfrak{g^!}}\circ J)[y,x]_{\mathfrak{g^!}},
	{ \phi_\mathfrak{g^!}}(z) \rangle+
	\langle{ \phi_\mathfrak{g^!}}(y) ,[z,({ \phi_\mathfrak{g^!}}\circ J)(x)]_{\mathfrak{g^!}}-({ \phi_\mathfrak{g^!}}\circ J)[z,x]_{\mathfrak{g^!}}
	\rangle.
	\end{align*}
	Applying (i) of (\ref{twoo}) in the above equation yields
	\begin{align}
	({\Phi}_{_{{ \phi_\mathfrak{g^!}}\circ J}}\langle\cdot,\cdot\rangle)(x,y,z)=&
	\langle \nabla_y({ \phi_\mathfrak{g^!}}\circ J)(x)-\nabla_{({ \phi_\mathfrak{g^!}}\circ J)(x)}y-({ \phi_\mathfrak{g^!}}\circ J)(\nabla_y x-\nabla_x y),
	{ \phi_\mathfrak{g^!}}(z) \rangle\label{L20}\\
	&+
	\langle{ \phi_\mathfrak{g^!}}(y) ,\nabla_z({ \phi_\mathfrak{g^!}}\circ J)(x)-\nabla_{({ \phi_\mathfrak{g^!}}\circ J)(x)}z-({ \phi_\mathfrak{g^!}}\circ J)(\nabla_z x-\nabla_x z)
	\rangle.\nonumber
	\end{align}
	On the other hand, from (ii) of (\ref{twoo}) and Definition \ref{L100} it follows 
	\begin{align*}
	&
	\langle \nabla_{({ \phi_\mathfrak{g^!}}\circ J)(x)} y,
	{ \phi_\mathfrak{g^!}}(z) \rangle=-
	\langle{ \phi_\mathfrak{g^!}}(y),\nabla_{({ \phi_\mathfrak{g^!}}\circ J)(x)} z\rangle,\\
	&
	\langle{ \phi_\mathfrak{g^!}}(y) ,({ \phi_\mathfrak{g^!}}\circ J)(\nabla_x z)
	\rangle=-\langle \nabla_x({ \phi_\mathfrak{g^!}}\circ J) y,{ \phi_\mathfrak{g^!}}(z)
	\rangle.
	\end{align*}
	Setting two last equations in (\ref{L20}), we get (i).  Similarly like in the above, we have
	\begin{align*}
	&\ \ ({\Phi}_{_{{ \phi_\mathfrak{g^!}}\circ J}}\langle\cdot,\cdot\rangle)(z, y,x)=\langle (	\nabla_y ({ \phi_\mathfrak{g^!}}\circ J))z,{ \phi_\mathfrak{g^!}}(x)\rangle+\langle { \phi_\mathfrak{g^!}}(y),  (	\nabla_x({ \phi_\mathfrak{g^!}}\circ J))z-
	\langle  (	\nabla_z( { \phi_\mathfrak{g^!}}\circ J))y,{ \phi_\mathfrak{g^!}}(x)\rangle.
	\end{align*}
	The above equation and Theorem \ref{MA8} imply (ii).
\end{proof}
Assume $(J,\langle\cdot,\cdot\rangle)$ is a left-invariant almost Norden structure on a Hom-Lier group $(G, \diamond, e_\Phi, \Phi)$ (or, an almost Norden structure on a Hom-Lie algebra $( \mathfrak{g}, [\cdot, \cdot]_{\mathfrak{g^!}}, \phi_\mathfrak{g^!})$). We say that $(G, \diamond, e_\Phi, \Phi, J, \langle\cdot, \cdot\rangle)$ (or, $( \mathfrak{g^!}, [\cdot, \cdot]_{\mathfrak{g^!}}, \phi_\mathfrak{g^!},J,\langle\cdot,\cdot\rangle)$) is a left-invariant almost {\it holomorphic} Norden Hom-Lie group (or, an almost {\it holomorphic} Norden Hom-Lie algebra)
if
\begin{align*}
{\Phi}_{_{{ \phi_\mathfrak{g^!}}\circ J}}\langle\cdot,\cdot\rangle=0,
\end{align*}
i.e., 
	\begin{align}\label{L9}
\langle[y,({ \phi_\mathfrak{g^!}}\circ J)(x)]_{\mathfrak{g^!}},
{ \phi_\mathfrak{g^!}}(z) \rangle+
\langle{ \phi_\mathfrak{g^!}}(y) ,[z,({ \phi_\mathfrak{g^!}}\circ J)(x)]_{\mathfrak{g^!}}\rangle\!\!=\!\!\langle({ \phi_\mathfrak{g^!}}\circ J)[y,x]_{\mathfrak{g^!}},
{ \phi_\mathfrak{g^!}}(z) \rangle+\langle{ \phi_\mathfrak{g^!}}(y) ,({ \phi_\mathfrak{g^!}}\circ J)[z,x]_{\mathfrak{g^!}}
\rangle,
\end{align}
 for any $x,y,z\in\mathfrak{g^!}$.
Also, if $J$ is integrable, $(G, \diamond, e_\Phi, \Phi, J, \langle\cdot, \cdot\rangle)$ is called a left-invariant holomorphic Norden Hom-Lie group (or, $( \mathfrak{g^!}, [\cdot, \cdot]_{\mathfrak{g^!}}, \phi_\mathfrak{g^!},J,\langle\cdot,\cdot\rangle)$ is called a holomorphic Norden Hom-Lie algebra).
\begin{example}
	We consider the Norden Hom-Lie algebra  $(\mathfrak{g}, [\cdot, \cdot], \phi_{\mathfrak{g}}, J, \langle\cdot,\cdot\rangle)$ introduced in Example (\ref{L10}). Setting $x=y=w=e_1$ in (\ref{L9}), implies $A=0$. Considering $x=y=e_1$ and $w=e_2$, we get $B=0$. Also, putting  $x=y=e_1$ and $w=e_3$,  yields $D=0$. Moreover, from  $x=y=e_1$ and $w=e_4$,  we conclude $C=0$. It is a contradiction  with the condition non-degenerate of the  metric $\langle\cdot,\cdot\rangle$ and so $( \mathfrak{g}, [\cdot, \cdot], \phi_\mathfrak{g},J,\langle\cdot,\cdot\rangle)$ is not  a holomorphic Norden Hom-Lie algebra.
\end{example}
\begin{example}
Let  $(\mathfrak{g}, [\cdot, \cdot]_\mathfrak{g}, \phi_{\mathfrak{g}}, J, \langle,\rangle)$ be a $4$-dimensional   Norden Hom-Lie algebra in Example (\ref{L8}).  We result that all of
$
\langle[e_i,({ \phi_\mathfrak{g}}\circ J)(e_j)]_\mathfrak{g},
{ \phi_\mathfrak{g}}(e_k) \rangle+
\langle{ \phi_\mathfrak{g}}(e_i) ,[e_k,({ \phi_\mathfrak{g}}\circ J)(e_j)]_\mathfrak{g}\rangle=\langle({ \phi_\mathfrak{g}}\circ J)[e_i,e_j]_\mathfrak{g},
{ \phi_\mathfrak{g}}(e_k) \rangle+\langle{ \phi_\mathfrak{g}}(e_i) ,({ \phi_\mathfrak{g}}\circ J)[e_k,e_j]_\mathfrak{g}
\rangle$, $i,j,k=1,2,3,4,
$  are zero expect
\begin{align*}
\langle[e_1,({ \phi_\mathfrak{g}}\circ J)e_2]_\mathfrak{g},
{ \phi_\mathfrak{g}}(e_2) \rangle\!\!+\!\!
\langle{ \phi_\mathfrak{g}}(e_1) ,[e_2,({ \phi_\mathfrak{g}}\circ J)e_2]_\mathfrak{g}\rangle\!\!=&\!\!-\!\!B\!\!=\!\!\langle({ \phi_\mathfrak{g}}\circ J)[e_1,e_2]_\mathfrak{g},
{ \phi_\mathfrak{g}}(e_2) \rangle\!\!+\!\!\langle{ \phi_\mathfrak{g}}(e_1) ,({ \phi_\mathfrak{g}}\circ J)[e_2,e_2]_\mathfrak{g}
\rangle,\\
\langle[e_1,({ \phi_\mathfrak{g}}\circ J)e_3]_\mathfrak{g},
{ \phi_\mathfrak{g}}(e_3) \rangle\!\!+\!\!
\langle{ \phi_\mathfrak{g}}(e_1) ,[e_3,({ \phi_\mathfrak{g}}\circ J)e_3]_\mathfrak{g}\rangle\!\!=&B\!\!=\!\!\langle({ \phi_\mathfrak{g}}\circ J)[e_1,e_3]_\mathfrak{g},
{ \phi_\mathfrak{g}}(e_3) \rangle\!\!+\!\!\langle{ \phi_\mathfrak{g}}(e_1) ,({ \phi_\mathfrak{g}}\circ J)[e_3,e_3]_\mathfrak{g}
\rangle,\\
\langle[e_2,({ \phi_\mathfrak{g}}\circ J)e_1]_\mathfrak{g},
{ \phi_\mathfrak{g}}(e_2) \rangle\!\!+\!\!
\langle{ \phi_\mathfrak{g}}(e_2) ,[e_2,({ \phi_\mathfrak{g}}\circ J)e_1]_\mathfrak{g}\rangle\!\!=&2B\!\!=\!\!\langle({ \phi_\mathfrak{g}}\circ J)[e_2,e_1]_\mathfrak{g},
{ \phi_\mathfrak{g}}(e_2) \rangle\!\!+\!\!\langle{ \phi_\mathfrak{g}}(e_2) ,({ \phi_\mathfrak{g}}\circ J)[e_2,e_1]_\mathfrak{g}
\rangle,\\
\langle[e_2,({ \phi_\mathfrak{g}}\circ J)e_2]_\mathfrak{g},
{ \phi_\mathfrak{g}}(e_1) \rangle\!\!+\!\!
\langle{ \phi_\mathfrak{g}}(e_2) ,[e_1,({ \phi_\mathfrak{g}}\circ J)e_2]_\mathfrak{g}\rangle\!\!=&\!\!-B\!\!=\!\!\langle({ \phi_\mathfrak{g}}\circ J)[e_2,e_2]_\mathfrak{g},
{ \phi_\mathfrak{g}}(e_1) \rangle\!\!+\!\!\langle{ \phi_\mathfrak{g}}(e_2) ,({ \phi_\mathfrak{g}}\circ J)[e_1,e_2]_\mathfrak{g}
\rangle,\\
\langle[e_2,({ \phi_\mathfrak{g}}\circ J)e_3]_\mathfrak{g},
{ \phi_\mathfrak{g}}(e_4) \rangle\!\!+\!\!
\langle{ \phi_\mathfrak{g}}(e_2) ,[e_4,({ \phi_\mathfrak{g}}\circ J)e_3]_\mathfrak{g}\rangle\!\!=&\!\!-B\!\!=\!\!\langle({ \phi_\mathfrak{g}}\circ J)[e_2,e_3]_\mathfrak{g},
{ \phi_\mathfrak{g}}(e_4) \rangle\!\!+\!\!\langle{ \phi_\mathfrak{g}}(e_2) ,({ \phi_\mathfrak{g}}\circ J)[e_4,e_3]_\mathfrak{g}
\rangle,\\
\langle[e_2,({ \phi_\mathfrak{g}}\circ J)e_4]_\mathfrak{g},
{ \phi_\mathfrak{g}}(e_3) \rangle\!\!+\!\!
\langle{ \phi_\mathfrak{g}}(e_2) ,[e_3,({ \phi_\mathfrak{g}}\circ J)e_4]_\mathfrak{g}\rangle\!\!=&2B\!\!=\!\!\langle({ \phi_\mathfrak{g}}\circ J)[e_2,e_4]_\mathfrak{g},
{ \phi_\mathfrak{g}}(e_3) \rangle\!\!+\!\!\langle{ \phi_\mathfrak{g}}(e_2) ,({ \phi_\mathfrak{g}}\circ J)[e_3,e_4]_\mathfrak{g}
\rangle,\\
\langle[e_3,({ \phi_\mathfrak{g}}\circ J)e_1]_\mathfrak{g},
{ \phi_\mathfrak{g}}(e_3) \rangle\!\!+\!\!
\langle{ \phi_\mathfrak{g}}(e_3) ,[e_3,({ \phi_\mathfrak{g}}\circ J)e_1]_\mathfrak{g}\rangle\!\!=&\!\!-2B\!\!=\!\!\langle({ \phi_\mathfrak{g}}\circ J)[e_3,e_1]_\mathfrak{g},
{ \phi_\mathfrak{g}}(e_3) \rangle\!\!+\!\!\langle{ \phi_\mathfrak{g}}(e_3) ,({ \phi_\mathfrak{g}}\circ J)[e_3,e_1]_\mathfrak{g}
\rangle,\\
\langle[e_3,({ \phi_\mathfrak{g}}\circ J)e_2]_\mathfrak{g},
{ \phi_\mathfrak{g}}(e_4) \rangle\!\!+\!\!
\langle{ \phi_\mathfrak{g}}(e_3) ,[e_4,({ \phi_\mathfrak{g}}\circ J)e_2]_\mathfrak{g}\rangle\!\!=&\!\!-B\!\!=\!\!\langle({ \phi_\mathfrak{g}}\circ J)[e_3,e_2]_\mathfrak{g},
{ \phi_\mathfrak{g}}(e_4) \rangle\!\!+\!\!\langle{ \phi_\mathfrak{g}}(e_3) ,({ \phi_\mathfrak{g}}\circ J)[e_4,e_2]_\mathfrak{g}
\rangle,\\
\langle[e_3,({ \phi_\mathfrak{g}}\circ J)e_3]_\mathfrak{g},
{ \phi_\mathfrak{g}}(e_1) \rangle\!\!+\!\!
\langle{ \phi_\mathfrak{g}}(e_3) ,[e_1,({ \phi_\mathfrak{g}}\circ J)e_3]_\mathfrak{g}\rangle\!\!=&B\!\!=\!\!\langle({ \phi_\mathfrak{g}}\circ J)[e_3,e_3]_\mathfrak{g},
{ \phi_\mathfrak{g}}(e_1) \rangle\!\!+\!\!\langle{ \phi_\mathfrak{g}}(e_3) ,({ \phi_\mathfrak{g}}\circ J)[e_1,e_3]_\mathfrak{g}
\rangle,\\
\langle[e_3,({ \phi_\mathfrak{g}}\circ J)e_4]_\mathfrak{g},
{ \phi_\mathfrak{g}}(e_2) \rangle\!\!+\!\!
\langle{ \phi_\mathfrak{g}}(e_3) ,[e_2,({ \phi_\mathfrak{g}}\circ J)e_4]_\mathfrak{g}\rangle\!\!=&2B\!\!=\!\!\langle({ \phi_\mathfrak{g}}\circ J)[e_3,e_4]_\mathfrak{g},
{ \phi_\mathfrak{g}}(e_2) \rangle\!\!+\!\!\langle{ \phi_\mathfrak{g}}(e_3) ,({ \phi_\mathfrak{g}}\circ J)[e_2,e_4]_\mathfrak{g}
\rangle,\\
\langle[e_4,({ \phi_\mathfrak{g}}\circ J)e_2]_\mathfrak{g},
{ \phi_\mathfrak{g}}(e_3) \rangle\!\!+\!\!
\langle{ \phi_\mathfrak{g}}(e_4) ,[e_3,({ \phi_\mathfrak{g}}\circ J)e_2]_\mathfrak{g}\rangle\!\!=&\!\!-B\!\!=\!\!\langle({ \phi_\mathfrak{g}}\circ J)[e_4,e_2]_\mathfrak{g},
{ \phi_\mathfrak{g}}(e_3) \rangle\!\!+\!\!\langle{ \phi_\mathfrak{g}}(e_4) ,({ \phi_\mathfrak{g}}\circ J)[e_3,e_2]_\mathfrak{g}
\rangle,\\
\langle[e_4,({ \phi_\mathfrak{g}}\circ J)e_3]_\mathfrak{g},
{ \phi_\mathfrak{g}}(e_2) \rangle\!\!+\!\!
\langle{ \phi_\mathfrak{g}}(e_4) ,[e_2,({ \phi_\mathfrak{g}}\circ J)e_3]_\mathfrak{g}\rangle\!\!=&\!\!-B\!\!=\!\!\langle({ \phi_\mathfrak{g}}\circ J)[e_4,e_3]_\mathfrak{g},
{ \phi_\mathfrak{g}}(e_2) \rangle\!\!+\!\!\langle{ \phi_\mathfrak{g}}(e_4) ,({ \phi_\mathfrak{g}}\circ J)[e_2,e_3]_\mathfrak{g}
\rangle,
\end{align*}
which imply that, (\ref{L9}) holds. Thus  $( \mathfrak{g}, [\cdot, \cdot]_\mathfrak{g}, \phi_\mathfrak{g},J,\langle\cdot,\cdot\rangle)$ is  a holomorphic Norden Hom-Lie algebra.
\end{example}
\begin{corollary}\label{L5}
	A left-invariant almost Norden Hom-Lie group $(G, \diamond, e_\Phi, \Phi, J, \langle\cdot, \cdot\rangle)$ (or, an almost Norden Hom-Lie algebra $( \mathfrak{g^!}, [\cdot, \cdot]_{\mathfrak{g^!}}, \phi_\mathfrak{g^!},J,\langle\cdot,\cdot\rangle)$) is holomorphic if and only if the  structure ${ \phi_\mathfrak{g^!}}\circ J$ is invariant with respect to the left-invariant Hom-Levi-Civita connection (or, the Hom-Levi-Civita connection) $\nabla$, i.e., 
	\[
	\nabla({ \phi_\mathfrak{g^!}}\circ J)=0.
	\]
%	for any $x\in \mathfrak{g^!}$.
\end{corollary}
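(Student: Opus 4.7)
The plan is to deduce Corollary \ref{L5} directly from Proposition \ref{MA88}, using only the definition of holomorphy $\Phi_{{\phi_{\mathfrak{g^!}}}\circ J}\langle\cdot,\cdot\rangle=0$ together with the non-degeneracy of $\langle\cdot,\cdot\rangle$ and the invertibility of $\phi_{\mathfrak{g^!}}$. Both directions are essentially one-line consequences, so the argument is short.

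For the ``if'' direction, I would assume $\nabla({\phi_{\mathfrak{g^!}}}\circ J)=0$. Then every occurrence of $(\nabla_w({\phi_{\mathfrak{g^!}}}\circ J))u$ on the right-hand side of Proposition \ref{MA88}(i) vanishes, so $({\Phi}_{{\phi_{\mathfrak{g^!}}}\circ J}\langle\cdot,\cdot\rangle)(x,y,z)=0$ for all $x,y,z\in\mathfrak{g^!}$, which is exactly the holomorphy condition \eqref{L9}.

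For the ``only if'' direction, suppose $({\Phi}_{{\phi_{\mathfrak{g^!}}}\circ J}\langle\cdot,\cdot\rangle)=0$. Then Proposition \ref{MA88}(ii) reduces to
\[
\langle(\nabla_y({\phi_{\mathfrak{g^!}}}\circ J))x,\phi_{\mathfrak{g^!}}(z)\rangle=0,\qquad \forall\,x,y,z\in\mathfrak{g^!}.
\]
Since $\phi_{\mathfrak{g^!}}$ is a vector space isomorphism, $\phi_{\mathfrak{g^!}}(z)$ ranges over all of $\mathfrak{g^!}$ as $z$ does, and since $\langle\cdot,\cdot\rangle$ is non-degenerate we conclude $(\nabla_y({\phi_{\mathfrak{g^!}}}\circ J))x=0$ for all $x,y\in\mathfrak{g^!}$, i.e.\ $\nabla({\phi_{\mathfrak{g^!}}}\circ J)=0$.

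The only potential obstacle is to make sure Proposition \ref{MA88}(ii) is symmetric/strong enough to extract the vanishing of $\nabla({\phi_{\mathfrak{g^!}}}\circ J)$ from the scalar identity; this is handled by the non-degeneracy of the Norden metric combined with the fact that $\phi_{\mathfrak{g^!}}$ is an isomorphism, both of which are built into the setting. No case analysis or integrability argument is required here, since the statement does not use the complex structure condition on $J$ beyond what is already encoded in $\phi_{\mathfrak{g^!}}\circ J$ being pure for $\langle\cdot,\cdot\rangle$.
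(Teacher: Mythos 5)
Your argument is the intended one: Corollary \ref{L5} is stated immediately after Proposition \ref{MA88} precisely so that part (i) gives the ``if'' direction (all covariant-derivative terms vanish, hence ${\Phi}_{_{{\phi_\mathfrak{g^!}}\circ J}}\langle\cdot,\cdot\rangle=0$) and part (ii) gives the ``only if'' direction ($2\langle(\nabla_y({\phi_\mathfrak{g^!}}\circ J))x,{\phi_\mathfrak{g^!}}(z)\rangle=0$ for all $x,y,z$, then non-degeneracy of $\langle\cdot,\cdot\rangle$ and surjectivity of $\phi_{\mathfrak{g^!}}$ force $\nabla({\phi_\mathfrak{g^!}}\circ J)=0$). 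Both of those steps are correct as you wrote them.

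The one point where your closing remark is slightly off: the paper distinguishes ``almost holomorphic'' (the condition ${\Phi}_{_{{\phi_\mathfrak{g^!}}\circ J}}\langle\cdot,\cdot\rangle=0$ alone) from ``holomorphic'' (that condition \emph{plus} integrability of $J$). Your ``if'' direction, as stated, only delivers almost-holomorphy, and you explicitly claim no integrability argument is needed. It is needed, but it is cheap: the paper's computation \eqref{AM11} shows that $\nabla({\phi_\mathfrak{g^!}}\circ J)=0$ together with the torsion-freeness of the Hom-Levi-Civita connection, i.e.\ (i) of \eqref{twoo}, forces $N_{{\phi_\mathfrak{g^!}}\circ J}=0$, so $J$ is automatically integrable and the group (or algebra) is genuinely holomorphic, not merely almost holomorphic. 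Adding one sentence citing \eqref{AM11} closes this gap; everything else in your proposal stands.
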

Applying  Corollary \ref{L5}, we can deduce
\begin{proposition}\label{MA17}
	Let $(G, \diamond, e_\Phi, \Phi, J)$ be a left-invariant almost complex Hom-Lie group (or, $(\mathfrak{g^!}, [\cdot, \cdot]_{\mathfrak{g^!}}, \phi_\mathfrak{g^!}, J)$ be
	an almost complex Hom-Lie algebra)  equipped with a left-invariant pseudo-Riemannian metric (or, a pseudo-Riemannian metric)	$\langle\cdot,\cdot\rangle$ and a left-invariant connection (or, connection) $\nabla$.
	Then the following statements are equivalent:\\
	a)\ $(G, \diamond, e_\Phi, \Phi, J, \langle\cdot, \cdot\rangle)$ is a left-invariant K\"{a}hler-Norden Hom-Lie group (or, $(\mathfrak{g^!}, [\cdot, \cdot]_{\mathfrak{g^!}}, \phi_{\mathfrak{g^!}},J,\langle\cdot,\cdot\rangle)$ is a K\"{a}hler-Norden Hom-Lie algebra),\\
	b)\ $(G, \diamond, e_\Phi, \Phi, J, \langle\cdot, \cdot\rangle)$ is a left-invariant holomorphic Norden Hom-Lie group (or, $(\mathfrak{g^!}, [\cdot, \cdot]_{\mathfrak{g^!}}, \phi_{\mathfrak{g^!}},J,\langle\cdot,\cdot\rangle)$ is 
a holomorphic Norden Hom-Lie algebra).
\end{proposition}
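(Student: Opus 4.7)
The plan is to reduce both conditions to the single tensorial identity $\nabla(\phi_{\mathfrak{g}^!}\circ J)=0$, which will serve as the bridge between the K\"ahler-Norden and holomorphic Norden properties. All of the work for this has essentially already been done in the preceding material; what is needed is to assemble it carefully and make sure the integrability hypothesis is handled in both directions.

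First I would unwind the definitions. By Definition \ref{KKKKK}, condition (a) is, by definition, the statement that the almost Norden data satisfies $\nabla(\phi_{\mathfrak{g}^!}\circ J)=0$. On the other hand, by the very definition of a left-invariant holomorphic Norden structure (respectively, a holomorphic Norden Hom-Lie algebra), condition (b) is the conjunction of two statements: $\Phi_{\phi_{\mathfrak{g}^!}\circ J}\langle\cdot,\cdot\rangle=0$ together with the integrability of $J$. Corollary \ref{L5} already equates the vanishing of the Tachibana operator $\Phi_{\phi_{\mathfrak{g}^!}\circ J}\langle\cdot,\cdot\rangle$ with $\nabla(\phi_{\mathfrak{g}^!}\circ J)=0$. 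So modulo integrability, (a) and (b) are both packaged by the same identity.

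For the direction (a) $\Rightarrow$ (b), starting from $\nabla(\phi_{\mathfrak{g}^!}\circ J)=0$ I would invoke Corollary \ref{L5} to get $\Phi_{\phi_{\mathfrak{g}^!}\circ J}\langle\cdot,\cdot\rangle=0$, and then use the remark following Definition \ref{KKKKK} (the computation around equation (\ref{AM11})) to observe that $\nabla(\phi_{\mathfrak{g}^!}\circ J)=0$ forces the Nijenhuis tensor $N_{\phi_{\mathfrak{g}^!}\circ J}$ to vanish, hence $J$ is integrable. Thus both parts of the definition of a left-invariant holomorphic Norden Hom-Lie group hold. For the reverse direction (b) $\Rightarrow$ (a), from $\Phi_{\phi_{\mathfrak{g}^!}\circ J}\langle\cdot,\cdot\rangle=0$ Corollary \ref{L5} immediately yields $\nabla(\phi_{\mathfrak{g}^!}\circ J)=0$, and together with the almost Norden hypothesis this gives a left-invariant K\"ahler-Norden structure in the sense of Definition \ref{KKKKK}.

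The main (very mild) obstacle is bookkeeping: one must verify that the Hom-Levi-Civita connection appearing in Corollary \ref{L5} is the same one used in Definition \ref{KKKKK}, which is guaranteed by the uniqueness statement of Theorem \ref{**} together with equation (\ref{Koszul}); and one must confirm that integrability of $J$ is supplied automatically in the $(a)\Rightarrow(b)$ direction (from the argument around (\ref{AM11})) and used as a hypothesis in the $(b)\Rightarrow(a)$ direction (where it is actually not needed since $\nabla(\phi_{\mathfrak{g}^!}\circ J)=0$ already forces it). The whole proof is therefore a short citation of Definition \ref{KKKKK} and Corollary \ref{L5}, with the integrability check from (\ref{AM11}) settling any mismatch between ``almost'' versions and the genuine K\"ahler-Norden / holomorphic Norden versions.
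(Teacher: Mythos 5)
Your proposal is correct and takes essentially the same route as the paper, which deduces Proposition \ref{MA17} directly by ``applying Corollary \ref{L5}'': both conditions reduce to the single identity $\nabla(\phi_{\mathfrak{g^!}}\circ J)=0$ via Definition \ref{KKKKK} and Corollary \ref{L5}. Your extra care with the integrability of $J$ (via the computation around (\ref{AM11})) only makes explicit a point the paper leaves implicit.
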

	\begin{corollary}
	On a left-invariant K\"{a}hler-Norden Hom-Lie group $(G, \diamond, e_\Phi, \Phi, J, \langle\cdot, \cdot\rangle)$ (or, a K\"{a}hler-Norden Hom-Lie algebra $(\mathfrak{g^!}, [\cdot, \cdot]_{\mathfrak{g^!}}, \phi_{\mathfrak{g^!}},J,\langle\cdot,\cdot\rangle)$), we have
		\begin{align*}
	{\Phi}_{_{{ \phi_\mathfrak{g^!}}\circ J}}\langle\cdot,\cdot\rangle=0=
	{\Phi}_{_{{ \phi_\mathfrak{g^!}}\circ J}}\ll\cdot,\cdot\gg.
	\end{align*}
		\end{corollary}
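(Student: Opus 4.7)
The plan is to derive both vanishings as direct consequences of results already established in the excerpt, so that the corollary follows with essentially no new computation.

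First I would handle the equality ${\Phi}_{_{{\phi_\mathfrak{g^!}}\circ J}}\langle\cdot,\cdot\rangle=0$. By Proposition \ref{MA17}, being a left-invariant K\"{a}hler-Norden Hom-Lie group (resp.\ a K\"{a}hler-Norden Hom-Lie algebra) is equivalent to being a left-invariant holomorphic Norden Hom-Lie group (resp.\ a holomorphic Norden Hom-Lie algebra). But the defining condition of the holomorphic property is exactly ${\Phi}_{_{{\phi_\mathfrak{g^!}}\circ J}}\langle\cdot,\cdot\rangle=0$, as stated in equation (\ref{L9}). So the first half is immediate.

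Next I would obtain ${\Phi}_{_{{\phi_\mathfrak{g^!}}\circ J}}\ll\cdot,\cdot\gg=0$ using the formula of Proposition \ref{L21}(i), which reads
\begin{align*}
({\Phi}_{_{{\phi_\mathfrak{g^!}}\circ J}}\ll\cdot,\cdot\gg)(x,y,z)=({\Phi}_{_{{\phi_\mathfrak{g^!}}\circ J}}\langle\cdot,\cdot\rangle)(x,({\phi_\mathfrak{g^!}}\circ J)y,z)+\langle N_{{\phi_\mathfrak{g^!}}\circ J}(x,y),{\phi_\mathfrak{g^!}}(z)\rangle.
\end{align*}
The first summand on the right vanishes by the previous paragraph. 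For the second summand, recall that by Definition \ref{KKKKK} together with the observation made immediately after it (using (\ref{AM11})), the complex structure $J$ of a K\"{a}hler-Norden Hom-Lie algebra is integrable, so $N_{{\phi_\mathfrak{g^!}}\circ J}=0$. Hence the right-hand side vanishes identically on $\mathfrak{g^!}\times\mathfrak{g^!}\times\mathfrak{g^!}$, yielding the desired conclusion.

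Neither step requires any genuine computation beyond citing Propositions \ref{MA17} and \ref{L21} and invoking integrability of $J$; the only mild subtlety to be careful about is making sure one quotes the correct direction of the equivalence in Proposition \ref{MA17} and correctly identifies the integrability of $J$ in Definition \ref{KKKKK} (via the remark that follows it). The Hom-Lie group case and the Hom-Lie algebra case are handled by exactly the same argument, since all operators appearing are determined by their restrictions to left-invariant sections as in Remark \ref{Rem}.
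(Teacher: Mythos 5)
Your proposal is correct and follows essentially the route the paper intends: Proposition \ref{MA17} (equivalently Corollary \ref{L5}) converts the K\"{a}hler-Norden hypothesis into the defining condition ${\Phi}_{_{{\phi_\mathfrak{g^!}}\circ J}}\langle\cdot,\cdot\rangle=0$, and the transfer to the twin metric is exactly the remark the paper makes right after Proposition \ref{L21}, namely that formula (i) together with the vanishing of $N_{{\phi_\mathfrak{g^!}}\circ J}$ makes the two vanishings equivalent. Your explicit justification that $N_{{\phi_\mathfrak{g^!}}\circ J}=0$ via the observation following Definition \ref{KKKKK} is the right way to discharge the integrability hypothesis, so nothing is missing.
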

	\begin{corollary}
		Let $(G, \diamond, e_\Phi, \Phi, J, \langle\cdot, \cdot\rangle)$ be a left-invariant K\"{a}hler-Norden Hom-Lie group (or, $(\mathfrak{g^!}, [\cdot, \cdot]_{\mathfrak{g^!}}, \phi_{\mathfrak{g^!}},J,\langle\cdot,\cdot\rangle)$ be  a K\"{a}hler-Norden Hom-Lie algebra) and $\ll \cdot,\cdot\gg=\langle ({ \phi_\mathfrak{g^!}}\circ J)\cdot,\cdot\rangle$. Then the following statements hold:\\
		a)\  $(G, \diamond, e_\Phi, \Phi, J, \ll\cdot, \cdot\gg)$ is a left-invariant K\"{a}hler-Norden Hom-Lie group (or, $(\mathfrak{g^!}, [\cdot, \cdot]_{\mathfrak{g^!}}, \phi_{\mathfrak{g^!}},J,\ll\cdot,\cdot\gg)$ is a K\"{a}hler-Norden Hom-Lie algebra),\\
			b)\  $(G, \diamond, e_\Phi, \Phi, J, \ll\cdot, \cdot\gg)$ is a left-invariant holomorphic Norden Hom-Lie group (or, $(\mathfrak{g^!}, [\cdot, \cdot]_{\mathfrak{g^!}}, \phi_{\mathfrak{g^!}},J,\ll\cdot,\cdot\gg)$ is 
	a	holomorphic Norden Hom-Lie algebra).
	\end{corollary}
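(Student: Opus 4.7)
The plan is to prove (a) directly from the definition of a K\"ahler-Norden structure, and then derive (b) as an immediate consequence of (a) via Proposition \ref{MA17}.

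First I would verify that $(J,\ll\cdot,\cdot\gg)$ is a left-invariant almost Norden structure. Symmetry of $\ll\cdot,\cdot\gg$ follows directly from (\ref{L14}); non-degeneracy is inherited from $\langle\cdot,\cdot\rangle$ since $\phi_{\mathfrak{g^!}}\circ J$ is an isomorphism. Compatibility with $\phi_{\mathfrak{g^!}}$, namely $\ll\phi_{\mathfrak{g^!}}(x),\phi_{\mathfrak{g^!}}(y)\gg=\ll x,y\gg$, reduces via $\phi_{\mathfrak{g^!}}\circ J=J\circ\phi_{\mathfrak{g^!}}$ to the analogous property for $\langle\cdot,\cdot\rangle$, and the Norden identity for $\ll\cdot,\cdot\gg$ follows at once from $(\phi_{\mathfrak{g^!}}\circ J)^2=-Id_{\mathfrak{g^!}}$ combined with (\ref{L14}). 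Integrability of $J$ is part of the ambient K\"ahler-Norden hypothesis.

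The heart of the argument is the identification of the Hom-Levi-Civita connection of $\ll\cdot,\cdot\gg$ with the original $\nabla$. Applying Proposition \ref{L21}(ii) gives
\[
(\nabla_x\ll\cdot,\cdot\gg)(y,z)=(\nabla_x\langle\cdot,\cdot\rangle)((\phi_{\mathfrak{g^!}}\circ J)y,z)+\langle(\nabla_x(\phi_{\mathfrak{g^!}}\circ J))y,\phi_{\mathfrak{g^!}}(z)\rangle.
\]
By hypothesis $\nabla(\phi_{\mathfrak{g^!}}\circ J)=0$, so the second summand vanishes; the first summand vanishes because compatibility (ii) of (\ref{twoo}) precisely expresses $(\nabla_x\langle\cdot,\cdot\rangle)=0$. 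Thus $\nabla$ is compatible with $\ll\cdot,\cdot\gg$. Since the torsion-free identity $[x,y]_{\mathfrak{g^!}}=\nabla_xy-\nabla_yx$ depends only on the bracket and not on the metric, $\nabla$ is also symmetric with respect to the twin structure. The uniqueness assertion of Theorem \ref{**} then forces $\nabla$ to be the Hom-Levi-Civita connection of $\ll\cdot,\cdot\gg$ as well.

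With the connection identified, statement (a) is immediate: the K\"ahler-Norden condition for $(J,\ll\cdot,\cdot\gg)$ is the equation $\nabla(\phi_{\mathfrak{g^!}}\circ J)=0$, which is already assumed. Statement (b) follows at once from Proposition \ref{MA17}, which establishes the equivalence between the K\"ahler-Norden and holomorphic Norden classes. I do not foresee a substantive obstacle, as Proposition \ref{L21}(ii) and the uniqueness of the Hom-Levi-Civita connection in Theorem \ref{**} carry almost all of the weight; the only mild care required is to check the bare Norden conditions on $\ll\cdot,\cdot\gg$ before invoking these results.
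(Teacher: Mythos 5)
Your proof is correct, but it runs in the opposite direction from the one the paper sets up. The paper states this corollary without proof, immediately after the corollary asserting ${\Phi}_{_{{\phi_\mathfrak{g^!}}\circ J}}\langle\cdot,\cdot\rangle=0={\Phi}_{_{{\phi_\mathfrak{g^!}}\circ J}}\ll\cdot,\cdot\gg$; the intended derivation is therefore to get part (b) first, via Proposition \ref{L21}(i) (whose Nijenhuis term vanishes by integrability, so holomorphicity of $\langle\cdot,\cdot\rangle$ transfers to the twin metric), and then to obtain part (a) from the equivalence in Proposition \ref{MA17}. You instead prove (a) first: you verify the almost Norden axioms for $\ll\cdot,\cdot\gg$, use Proposition \ref{L21}(ii) together with $\nabla({\phi_\mathfrak{g^!}}\circ J)=0$ and (ii) of (\ref{twoo}) to show $\nabla\ll\cdot,\cdot\gg=0$, invoke the uniqueness in Theorem \ref{**} to identify $\nabla$ as the Hom-Levi-Civita connection of $\ll\cdot,\cdot\gg$, and only then pass to (b) by Proposition \ref{MA17}. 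Both arguments lean on Proposition \ref{L21}, just on different halves of it. Your route costs a little more bookkeeping (the Norden axioms for the twin metric, the uniqueness argument) but buys a genuinely useful extra fact that the paper's route leaves implicit: the two metrics share the same Hom-Levi-Civita connection. The paper's route is shorter because the preceding corollary has already done the work. No gap either way.
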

\section{ curvature tensors in  holomorphic Norden Hom-Lie algebras}
We present some properties of  the Riemannian curvature tensor  of
a left-invariant holomorphic Norden Hom-Lie group (or, a holomorphic Norden Hom-Lie algebra) using the generalization of Tachibana operators, in this Section. Also, we show that any left-invariant holomorphic Hom-Lie group is a flat (or, holomorphic Norden Hom-Lie algebra carries  a Hom-Left-symmetric algebra )  if its left-invariant complex structure (or, complex structure) is abelian. 

 Let $(G, \diamond, e_\Phi, \Phi, J, \langle\cdot, \cdot\rangle)$ be a left-invariant K\"{a}hler-Norden Hom-Lie group (or, $(\mathfrak{g^!}, [\cdot, \cdot]_{\mathfrak{g^!}}, \phi_{\mathfrak{g^!}},J,\langle\cdot,\cdot\rangle)$ be a K\"{a}hler-Norden Hom-Lie algebra). The curvature tensor $\mathcal{K}$ of $G$ (or, $\mathfrak{g^!}$) is defined by
\begin{equation}\label{SS8}
\mathcal{K}(x,y):=\nabla_{\phi_{\mathfrak{g^!}}(x)}\circ \nabla_y-\nabla_{ \phi_{\mathfrak{g^!}}(y)}\circ \nabla_x-\nabla_{[x,y]_{\mathfrak{g^!}}}\circ \phi_{\mathfrak{g^!}},
\end{equation}
for any $x, y\in \mathfrak{g^!}$. Also, left-invariant K\"{a}hler-Norden Hom-Lie group  is said to be flat if  its curvature tensor vanishes identically.
	It is easy to see that the following formulas hold
\begin{align}
&\langle\mathcal{K}(x,y)z,w\rangle=-\langle\mathcal{K}(y,x)z,w\rangle,\nonumber\\
&\langle\mathcal{K}(x,y)z,w\rangle=-\langle\mathcal{K}(x,y)w,z\rangle,\nonumber\\
&\mathcal{K}(x,y)z+\mathcal{K}(y,z)x+\mathcal{K}(z,x)y=0,\nonumber\\
&(\nabla_x\mathcal{K})(y,z,w,t)+(\nabla_y\mathcal{K})(z,x,w,t)+(\nabla_z\mathcal{K})(x,y,w,t)=0.\label{L11}
\end{align}
We set
\[
\mathcal{K}(x,y,z,w)=\langle\mathcal{K}(x,y)z,w\rangle,
\]
for any $x,y,z,w\in\mathfrak{g^!}$. The condition $	\nabla({ \phi_\mathfrak{g^!}}\circ J)=0$, leads to
\[
\mathcal{K}(x,y)({ \phi_\mathfrak{g^!}}\circ J)z =({ \phi_\mathfrak{g^!}}\circ J)\mathcal{K}(x,y)z.
\]
Hence,  we have 
\begin{align*}
&\mathcal{K}(x,y,({ \phi_\mathfrak{g^!}}\circ J)z,w)=\mathcal{K}(x,y,z,({ \phi_\mathfrak{g^!}}\circ J)w),\\
&\mathcal{K}(({ \phi_\mathfrak{g^!}}\circ J)x,y,z,w)=\mathcal{K}(x,({ \phi_\mathfrak{g^!}}\circ J)y,z,w),
\end{align*}
i.e., $\mathcal{K}$ is pure with respect to $z$ and $w$, and also pure  with respect to $x$ and $y$.
Consider $\ll\cdot,\cdot\gg=\langle ({ \phi_\mathfrak{g^!}}\circ J)\cdot,\cdot\rangle$ and $\widetilde{\mathcal{K}}$ as the curvature tensor of $\ll\cdot,\cdot\gg$. By means of the Proposition \ref{L21}, $\mathcal{K}=\widetilde{\mathcal{K}}$  and considering $\widetilde{\mathcal{K}}(x,y,z,w)=\ll \widetilde{\mathcal{K}}(x,y)z,w\gg$, we get
\begin{align*}
&\widetilde{\mathcal{K}}(x,y,z,w)=\ll \widetilde{\mathcal{K}}(x,y)z,w\gg=\langle({ \phi_\mathfrak{g^!}}\circ J)\widetilde{\mathcal{K}}(x,y)z,w\rangle\\
&=\langle\widetilde{\mathcal{K}}(x,y)z,({ \phi_\mathfrak{g^!}}\circ J)w\rangle=\langle{\mathcal{K}}(x,y)z,({ \phi_\mathfrak{g^!}}\circ J)w\rangle={\mathcal{K}}(x,y,z,({ \phi_\mathfrak{g^!}}\circ J)w),
\end{align*}
and 
\begin{align*}
&\widetilde{\mathcal{K}}(z,w,x,y)=\ll \widetilde{\mathcal{K}}(z,w)x,y\gg=\langle({ \phi_\mathfrak{g^!}}\circ J)\widetilde{\mathcal{K}}(z,w)x,y\rangle\\
&=\langle\widetilde{\mathcal{K}}(z,w)x,({ \phi_\mathfrak{g^!}}\circ J)y\rangle=\langle{\mathcal{K}}(z,w)x,({ \phi_\mathfrak{g^!}}\circ J)y\rangle={\mathcal{K}}(z,w,x,({ \phi_\mathfrak{g^!}}\circ J)y).
\end{align*}
Note that $\widetilde{\mathcal{K}}(x,y,z,w)=\widetilde{\mathcal{K}}(z,w,x,y)$,  thus the above equations imply
\[
{\mathcal{K}}(x,y,z,({ \phi_\mathfrak{g^!}}\circ J)w)={\mathcal{K}}(z,w,x,({ \phi_\mathfrak{g^!}}\circ J)y),
\]
which means that
${\mathcal{K}}(x,y,z,w)$ is pure with respect to $y$ and $w$. Thus  ${\mathcal{K}}(x,y,z,w)$ is pure.
\begin{proposition}
	The curvature tensor  of a left-invariant holomorphic Norden  Hom-Lie group (or, a holomorphic Norden Hom-Lie algebra) is pure.
\end{proposition}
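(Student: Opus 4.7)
The plan is to leverage the machinery established just before the statement: by Proposition \ref{MA17}, working in the holomorphic Norden setting is the same as working in the K\"ahler-Norden setting, so the Hom-Levi-Civita connection $\nabla$ satisfies $\nabla(\phi_{\mathfrak{g^!}}\circ J)=0$. This single identity is the engine behind purity, and the proof will amount to showing that $\mathcal{K}(x,y,z,w)=\langle\mathcal{K}(x,y)z,w\rangle$ is invariant under replacing any one of its four arguments with $(\phi_{\mathfrak{g^!}}\circ J)$ applied to a neighboring argument.

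First I would establish purity in the pair $(z,w)$. Plugging $\nabla(\phi_{\mathfrak{g^!}}\circ J)=0$ into the definition \eqref{SS8} of $\mathcal{K}$, the operator $\phi_{\mathfrak{g^!}}\circ J$ commutes with $\mathcal{K}(x,y)$, hence $\mathcal{K}(x,y)(\phi_{\mathfrak{g^!}}\circ J)z=(\phi_{\mathfrak{g^!}}\circ J)\mathcal{K}(x,y)z$. Pairing with $w$ and using the Norden identity \eqref{L14}, one obtains $\mathcal{K}(x,y,(\phi_{\mathfrak{g^!}}\circ J)z,w)=\mathcal{K}(x,y,z,(\phi_{\mathfrak{g^!}}\circ J)w)$. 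The antisymmetry $\mathcal{K}(x,y,z,w)=-\mathcal{K}(y,x,z,w)$ together with the symmetry $\mathcal{K}(x,y,z,w)=\mathcal{K}(z,w,x,y)$ (which is a standard consequence of the four algebraic Bianchi-type relations listed just before \eqref{L11}) immediately transports purity from the last two arguments to the first two.

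Second I would handle purity across arguments of different pairs, say in $(y,w)$. The trick, already hinted at in the text, is to compare with the twin Norden metric $\ll\cdot,\cdot\gg=\langle(\phi_{\mathfrak{g^!}}\circ J)\cdot,\cdot\rangle$. By the corollary preceding this section, $\ll\cdot,\cdot\gg$ is itself K\"ahler-Norden with respect to the same $J$, and Koszul's formula \eqref{Koszul} applied to both metrics shows they share the same Hom-Levi-Civita connection $\nabla$; hence they share the same curvature operator, i.e.\ $\widetilde{\mathcal{K}}=\mathcal{K}$ as $(1,3)$-tensors. Computing the covariant form of $\widetilde{\mathcal{K}}$ in two ways,
\begin{align*}
\widetilde{\mathcal{K}}(x,y,z,w)&=\ll\mathcal{K}(x,y)z,w\gg=\mathcal{K}(x,y,z,(\phi_{\mathfrak{g^!}}\circ J)w),\\
\widetilde{\mathcal{K}}(z,w,x,y)&=\ll\mathcal{K}(z,w)x,y\gg=\mathcal{K}(z,w,x,(\phi_{\mathfrak{g^!}}\circ J)y),
\end{align*}
and invoking $\widetilde{\mathcal{K}}(x,y,z,w)=\widetilde{\mathcal{K}}(z,w,x,y)$, one deduces $\mathcal{K}(x,y,z,(\phi_{\mathfrak{g^!}}\circ J)w)=\mathcal{K}(x,(\phi_{\mathfrak{g^!}}\circ J)y,z,w)$ after relabeling and using the already-established purity in $(x,y)$. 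Combined with the first step, this yields purity in every pair of arguments, completing the proof.

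The main obstacle, if any, is bookkeeping rather than conceptual depth: one must verify carefully that $\nabla$ is genuinely common to $\langle\cdot,\cdot\rangle$ and $\ll\cdot,\cdot\gg$ (which uses $\nabla(\phi_{\mathfrak{g^!}}\circ J)=0$ to push $\phi_{\mathfrak{g^!}}\circ J$ through Koszul's formula) and that the algebraic symmetries of $\mathcal{K}$ listed before \eqref{L11} genuinely hold in the Hom-setting with $\phi_{\mathfrak{g^!}}$ inserted in the right places. Since these are already recorded in the excerpt, the proof reduces to assembling the three ingredients above.
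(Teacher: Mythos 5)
Your proposal is correct and follows essentially the same route as the paper: first purity in $(z,w)$ and $(x,y)$ from $\nabla(\phi_{\mathfrak{g^!}}\circ J)=0$ together with the algebraic symmetries of $\mathcal{K}$, and then the twin metric $\ll\cdot,\cdot\gg$ with $\widetilde{\mathcal{K}}=\mathcal{K}$ and the symmetry $\widetilde{\mathcal{K}}(x,y,z,w)=\widetilde{\mathcal{K}}(z,w,x,y)$ to obtain purity across the two pairs. The only cosmetic difference is that you justify $\widetilde{\mathcal{K}}=\mathcal{K}$ by pushing $\phi_{\mathfrak{g^!}}\circ J$ through Koszul's formula, whereas the paper cites its Proposition on the covariant derivative of the twin metric; both rest on the same identity $\nabla(\phi_{\mathfrak{g^!}}\circ J)=0$.
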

\begin{theorem}
	In  a left-invariant holomorphic Norden Hom-Lie group $(G, \diamond, e_\Phi, \Phi, J, \langle\cdot, \cdot\rangle)$ (or, a holomorphic Norden Hom-Lie algebra $(\mathfrak{g^!}, [\cdot, \cdot]_{\mathfrak{g^!}}, \phi_{\mathfrak{g^!}},J,\langle\cdot,\cdot\rangle)$), 	the curvature tensor  is a  holomorphic tensor, i.e., 
	\[
	{\Phi}_{_{{ \phi_\mathfrak{g^!}}\circ J}}\mathcal{K}=0.
	\]
\end{theorem}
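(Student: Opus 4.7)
The plan is to compute $(\Phi_{\phi_{\mathfrak{g^!}}\circ J}\mathcal{K})(x;y_1,y_2,y_3,y_4)$ directly from the definition (\ref{L3}) and to reduce the resulting expression to two applications of the second Bianchi identity (\ref{L11}) that are seen to coincide. Throughout, write $F:=\phi_{\mathfrak{g^!}}\circ J$; by Corollary \ref{L5} (equivalently, Proposition \ref{MA17}) the left-invariant K\"ahler--Norden hypothesis gives $\nabla F=0$.

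First I would use the symmetric identity $[u,v]_{\mathfrak{g^!}}=\nabla_u v-\nabla_v u$ from (i) of (\ref{twoo}) together with $\nabla F=0$ to obtain
\[
[y_i,Fx]_{\mathfrak{g^!}}-F[y_i,x]_{\mathfrak{g^!}}=F\nabla_x y_i-\nabla_{Fx}y_i.
\]
Substituting this into (\ref{L3}) with $q=4$ and splitting into the ``$F\nabla_x$'' and ``$\nabla_{Fx}$'' pieces, the second piece is exactly $-(\nabla_{Fx}\mathcal{K})(y_1,y_2,y_3,y_4)$ by the left-invariant covariant derivative formula (\ref{L1}). The first piece is handled in the same way after rewriting $F\nabla_x y_i=\nabla_x(Fy_i)$ and using $F\phi_{\mathfrak{g^!}}=\phi_{\mathfrak{g^!}}F$ together with the purity of $\mathcal{K}$ just established to park $F$ in the first slot; it equals $-(\nabla_x\mathcal{K})(Fy_1,y_2,y_3,y_4)$. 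Thus
\[
(\Phi_F\mathcal{K})(x;y_1,y_2,y_3,y_4)=(\nabla_{Fx}\mathcal{K})(y_1,y_2,y_3,y_4)-(\nabla_x\mathcal{K})(Fy_1,y_2,y_3,y_4).
\]

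Next, a short auxiliary check using $\nabla F=0$ and the purity of $\mathcal{K}$ inside (\ref{L1}) shows that $\nabla_x\mathcal{K}$ is itself pure with respect to all four $\mathcal{K}$-slots, so $F$ may be transported freely among them. On the one hand, applying Bianchi (\ref{L11}) with $(w,t)=(y_3,Fy_4)$ and then using this derived purity to move $F$ from the fourth slot to the first gives
\[
(\nabla_x\mathcal{K})(Fy_1,y_2,y_3,y_4)=-(\nabla_{y_1}\mathcal{K})(y_2,x,y_3,Fy_4)-(\nabla_{y_2}\mathcal{K})(x,y_1,y_3,Fy_4).
\]
On the other hand, applying (\ref{L11}) after replacing $x$ by $Fx$ and then using the purity of $\nabla_{y_1}\mathcal{K}$ and $\nabla_{y_2}\mathcal{K}$ to transport $F$ off the derivative slot onto the fourth slot of $\mathcal{K}$ produces exactly the same right-hand side for $(\nabla_{Fx}\mathcal{K})(y_1,y_2,y_3,y_4)$. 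Hence the two terms in the formula for $(\Phi_F\mathcal{K})(x;y_1,y_2,y_3,y_4)$ coincide, and it vanishes.

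The main technical obstacle is the bookkeeping: every relocation of $F$ must be licensed either by the purity of $\mathcal{K}$ already established or by the derived purity of $\nabla_x\mathcal{K}$, and one must keep careful track of which Bianchi identity is being invoked with which choice of $(w,t)$. Conceptually, however, the argument is the classical one from Norden geometry: the second Bianchi identity is the only mechanism available to move $F$ past a covariant derivative, and $\nabla F=0$ makes this bypass symmetric, which forces $\Phi_F\mathcal{K}$ to vanish.
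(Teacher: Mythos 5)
Your proposal is correct and follows essentially the same route as the paper's own proof: both reduce $(\Phi_{\phi_{\mathfrak{g^!}}\circ J}\mathcal{K})(x,y,z,w,t)$ to the difference $(\nabla_{(\phi_{\mathfrak{g^!}}\circ J)x}\mathcal{K})(y,z,w,t)-(\nabla_{x}\mathcal{K})((\phi_{\mathfrak{g^!}}\circ J)y,z,w,t)$ via $\nabla(\phi_{\mathfrak{g^!}}\circ J)=0$ and the torsion-free identity, and then kill this difference by applying the second Bianchi identity together with purity to both terms. Your only additions are cosmetic (you make explicit the purity of $\nabla\mathcal{K}$, which the paper uses implicitly), and the stray sign in the phrase describing the ``second piece'' is contradicted by your own displayed formula, which is the correct one.
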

\begin{proof}
	According to (\ref{L3}), we have 
	\begin{align*}
	({\Phi}_{_{{ \phi_\mathfrak{g^!}}\circ J}}\mathcal{K})(x,y,z,w,t)=&\ \mathcal{K}(
	[y,({ \phi_\mathfrak{g^!}}\circ J)(x)]_{\mathfrak{g^!}}-({ \phi_\mathfrak{g^!}}\circ J)[y,x]_{\mathfrak{g^!}},
	{ \phi_\mathfrak{g^!}}(z),	{ \phi_\mathfrak{g^!}}(w), 	{ \phi_\mathfrak{g^!}}(t))\\
	&+\mathcal{K}({ \phi_\mathfrak{g^!}}(y),
	[z,({ \phi_\mathfrak{g^!}}\circ J)(x)]_{\mathfrak{g^!}}-({ \phi_\mathfrak{g^!}}\circ J)[z,x]_{\mathfrak{g^!}},
		{ \phi_\mathfrak{g^!}}(w), 	{ \phi_\mathfrak{g^!}}(t))\\
		&+\mathcal{K}({ \phi_\mathfrak{g^!}}(y),{ \phi_\mathfrak{g^!}}(z), 
		[w,({ \phi_\mathfrak{g^!}}\circ J)(x)]_{\mathfrak{g^!}}-({ \phi_\mathfrak{g^!}}\circ J)[w,x]_{\mathfrak{g^!}},
			{ \phi_\mathfrak{g^!}}(t))\\
			&+\mathcal{K}({ \phi_\mathfrak{g^!}}(y),{ \phi_\mathfrak{g^!}}(z), { \phi_\mathfrak{g^!}}(w),
			[t,({ \phi_\mathfrak{g^!}}\circ J)(x)]_{\mathfrak{g^!}}-({ \phi_\mathfrak{g^!}}\circ J)[t,x]_{\mathfrak{g^!}}
			).
	\end{align*}
	From the condition $\nabla( { \phi_\mathfrak{g^!}}\circ J)=({ \phi_\mathfrak{g^!}}\circ J)\nabla$ and  the last equation imply
	\begin{align*}
({\Phi}_{_{{ \phi_\mathfrak{g^!}}\circ J}}\mathcal{K})(x,y,z,w,t)=&\ \mathcal{K}(
-\nabla_{({ \phi_\mathfrak{g^!}}\circ J)(x)} y+({ \phi_\mathfrak{g^!}}\circ J)(\nabla_x y),
{ \phi_\mathfrak{g^!}}(z),	{ \phi_\mathfrak{g^!}}(w), 	{ \phi_\mathfrak{g^!}}(t))\\
&+\mathcal{K}({ \phi_\mathfrak{g^!}}(y),
-\nabla_{({ \phi_\mathfrak{g^!}}\circ J)(x)} z+({ \phi_\mathfrak{g^!}}\circ J)(\nabla_x z),
{ \phi_\mathfrak{g^!}}(w), 	{ \phi_\mathfrak{g^!}}(t))\\
&+\mathcal{K}({ \phi_\mathfrak{g^!}}(y),{ \phi_\mathfrak{g^!}}(z), 
-\nabla_{({ \phi_\mathfrak{g^!}}\circ J)(x)}w+({ \phi_\mathfrak{g^!}}\circ J)(\nabla_x w),
{ \phi_\mathfrak{g^!}}(t))\\
&+\mathcal{K}({ \phi_\mathfrak{g^!}}(y),{ \phi_\mathfrak{g^!}}(z), { \phi_\mathfrak{g^!}}(w),
-\nabla_{({ \phi_\mathfrak{g^!}}\circ J)(x)} t+({ \phi_\mathfrak{g^!}}\circ J)(\nabla_x t)
).
\end{align*}
	 Using again   (\ref{L3}) in the above equation, it follows
	\begin{align}\label{L12}
	({\Phi}_{_{{ \phi_\mathfrak{g^!}}\circ J}}\mathcal{K})(x,y,z,w,t)=(\nabla_{ (\phi_\mathfrak{g^!}\circ J)x}\mathcal{K})(y,z,w,t)-(\nabla_{x}\mathcal{K})( (\phi_\mathfrak{g^!}\circ J)y,z,w,t).
	\end{align}
	On the other hand, from (\ref{L11}) we have
	\begin{align*}
	(\nabla_{ (\phi_\mathfrak{g^!}\circ J)x}\mathcal{K})(y,z,w,t)=-(\nabla_{ y}\mathcal{K})(z,(\phi_\mathfrak{g^!}\circ J)x,w,t)-(\nabla_{z}\mathcal{K})((\phi_\mathfrak{g^!}\circ J)x,y,w,t).
	\end{align*}
	Since ${\mathcal{K}}$ is pure, thus from the above equation yields
		\begin{align*}
	(\nabla_{ (\phi_\mathfrak{g^!}\circ J)x}\mathcal{K})(y,z,w,t)=-(\nabla_{ y}\mathcal{K})(z,x,w,(\phi_\mathfrak{g^!}\circ J)t)-(\nabla_{z}\mathcal{K})(x,y,w,(\phi_\mathfrak{g^!}\circ J)t),
	\end{align*}
and also, we get
	\[
	(\nabla_{x}\mathcal{K})( (\phi_\mathfrak{g^!}\circ J)y,z,w,t)=(\nabla_{x}\mathcal{K})( y,z,w,(\phi_\mathfrak{g^!}\circ J)t).
	\]
	Using (\ref{L11}) and two last equations in (\ref{L12}), we conclude the assertion.
\end{proof}
\begin{proposition}\label{L31}
	Let $(G, \diamond, e_\Phi, \Phi, J, \langle\cdot, \cdot\rangle)$ be a left-invariant holomorphic Norden Hom-Lie group (or, $(\mathfrak{g^!}, [\cdot, \cdot]_{\mathfrak{g^!}}, \phi_{\mathfrak{g^!}},J,\langle,\rangle)$ be 
	 a holomorphic Norden Hom-Lie algebra) such that $J$ is a left-invariant abelin complex structure (or, an abelian complex structure). Then we have
\[
\nabla_{\phi_{\mathfrak{g^!}}(x)}\circ \nabla_y=\nabla_{ \phi_{\mathfrak{g^!}}(y)}\circ \nabla_x,
\]
for any $x,y\in \mathfrak{g^!}$.
\end{proposition}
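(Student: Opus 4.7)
The plan is to prove the identity by a direct computation, applying the explicit Koszul-type formula (\ref{L92}) twice and then reducing the resulting expression to an instance of the Hom-Jacobi identity via the abelian hypothesis. Set $A := \phi_{\mathfrak{g^!}}\circ J$ for readability. The ingredients I will use are (\ref{L92}) rewritten in the equivalent form $2\nabla_x y = [x,y]_{\mathfrak{g^!}} + A[Ax,y]_{\mathfrak{g^!}}$, which follows from (\ref{L92}) together with the abelian-antisymmetry identity $[Au,v]_{\mathfrak{g^!}} = -[u,Av]_{\mathfrak{g^!}}$ (an immediate consequence of (\ref{L22})); the K\"ahler relation $\nabla A = 0$, which follows from the holomorphic hypothesis by Proposition \ref{MA17}; the elementary identities $A^2 = -Id_{\mathfrak{g^!}}$ and $\phi_{\mathfrak{g^!}}A = A\phi_{\mathfrak{g^!}}$; and the Hom-Jacobi identity.

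Applying this form of (\ref{L92}) first to expand $2\nabla_y z$, then a second time to evaluate $2\nabla_{\phi_{\mathfrak{g^!}}(x)}$ on the result, and using $\nabla A = 0$ to commute $A$ past the outer $\nabla$, I would obtain
\begin{align*}
4\,\nabla_{\phi_{\mathfrak{g^!}}(x)}\nabla_y z =\ & [\phi_{\mathfrak{g^!}}(x),[y,z]_{\mathfrak{g^!}}]_{\mathfrak{g^!}} - [\phi_{\mathfrak{g^!}}(Ax),[Ay,z]_{\mathfrak{g^!}}]_{\mathfrak{g^!}}\\
& + A\bigl([\phi_{\mathfrak{g^!}}(Ax),[y,z]_{\mathfrak{g^!}}]_{\mathfrak{g^!}} + [\phi_{\mathfrak{g^!}}(x),[Ay,z]_{\mathfrak{g^!}}]_{\mathfrak{g^!}}\bigr).
\end{align*}
Swapping $x$ and $y$ yields the analogous formula for $4\,\nabla_{\phi_{\mathfrak{g^!}}(y)}\nabla_x z$, so the identity reduces to showing that the $x\leftrightarrow y$ antisymmetrizations of the two groups on the right both vanish.

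For the first group, Hom-Jacobi turns $[\phi_{\mathfrak{g^!}}(x),[y,z]_{\mathfrak{g^!}}]_{\mathfrak{g^!}} - [\phi_{\mathfrak{g^!}}(y),[x,z]_{\mathfrak{g^!}}]_{\mathfrak{g^!}}$ into $-[\phi_{\mathfrak{g^!}}(z),[x,y]_{\mathfrak{g^!}}]_{\mathfrak{g^!}}$, and the same identity applied to the $(Ax,Ay,z)$ triple produces $-[\phi_{\mathfrak{g^!}}(z),[Ax,Ay]_{\mathfrak{g^!}}]_{\mathfrak{g^!}}$, which equals the previous expression by the abelian hypothesis (\ref{L22}); the two cancel. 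For the second group, two further applications of Hom-Jacobi collapse the antisymmetrization to the residue $\bigl[\phi_{\mathfrak{g^!}}(z),\ [Ay,x]_{\mathfrak{g^!}} - [Ax,y]_{\mathfrak{g^!}}\bigr]_{\mathfrak{g^!}}$, and this residue vanishes because Lie antisymmetry together with $[Au,v]_{\mathfrak{g^!}} = -[u,Av]_{\mathfrak{g^!}}$ yields $[Ay,x]_{\mathfrak{g^!}} = -[y,Ax]_{\mathfrak{g^!}} = [Ax,y]_{\mathfrak{g^!}}$. The main obstacle is pure bookkeeping: tracking the signs produced by the four insertions of $A$ and arranging the three Hom-Jacobi invocations so that the two abelian-dependent cancellations become transparent; no further geometric input beyond Propositions \ref{L29}, \ref{L30} and \ref{MA17} is required.
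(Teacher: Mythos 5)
Your argument is correct and follows essentially the same route as the paper's proof: both expand $\nabla_{\phi_{\mathfrak{g^!}}(x)}\nabla_y z$ twice via the formula of Proposition \ref{L30}, antisymmetrize in $x$ and $y$, and kill the resulting terms with the Hom-Jacobi identity together with the abelian relation (\ref{L22}) and its consequence $[({\phi_\mathfrak{g^!}}\circ J)u,v]_{\mathfrak{g^!}}=-[u,({\phi_\mathfrak{g^!}}\circ J)v]_{\mathfrak{g^!}}$. The only cosmetic difference is that you normalize (\ref{L92}) to the form $2\nabla_xy=[x,y]_{\mathfrak{g^!}}+({\phi_\mathfrak{g^!}}\circ J)[({\phi_\mathfrak{g^!}}\circ J)x,y]_{\mathfrak{g^!}}$ before expanding, which slightly streamlines the sign bookkeeping but does not change the substance.
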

\begin{proof}
	Using Proposition \ref{L30}, we can write
	\begin{align*}
(\nabla_{\phi_{\mathfrak{g^!}}(x)} \nabla_y)z=&\frac{1}{2}\nabla_{\phi_{\mathfrak{g^!}}(x)}([y,z]_{\mathfrak{g^!}}-({ \phi_\mathfrak{g}}\circ J)[y,({ \phi_\mathfrak{g^!}}\circ J)z]_{\mathfrak{g^!}})=\frac{1}{4}([{\phi_{\mathfrak{g^!}}(x)},[y,z]_{\mathfrak{g^!}}-({ \phi_\mathfrak{g}}\circ J)[y,({ \phi_\mathfrak{g^!}}\circ J)z]_{\mathfrak{g^!}}]_{\mathfrak{g^!}}\\
&-({ \phi_\mathfrak{g^!}}\circ J)[{\phi_{\mathfrak{g^!}}(x)},
({ \phi_\mathfrak{g^!}}\circ J)[y,z]_{\mathfrak{g^!}}]_{\mathfrak{g^!}}-({ \phi_\mathfrak{g^!}}\circ J)[{\phi_{\mathfrak{g^!}}(x)},[y,({ \phi_\mathfrak{g^!}}\circ J)z]_{\mathfrak{g^!}}
]_{\mathfrak{g^!}}).
\end{align*}
	Applying (\ref{L22}) in the above equation yields
		\begin{align*}
	(\nabla_{\phi_{\mathfrak{g^!}}(x)} \nabla_y)z=&\frac{1}{4}([{\phi_{\mathfrak{g^!}}(x)},[y,z]_{\mathfrak{g^!}}]_{\mathfrak{g^!}}
	-[({ \phi_\mathfrak{g^!}}\circ J)({\phi_{\mathfrak{g^!}}(x))},[({ \phi_\mathfrak{g^!}}\circ J)y,z]_{\mathfrak{g^!}}]_{\mathfrak{g^!}}\\
	&+({ \phi_\mathfrak{g^!}}\circ J)[	({ \phi_\mathfrak{g^!}}\circ J)){\phi_{\mathfrak{g^!}}(x)}),
	[({ \phi_\mathfrak{g^!}}\circ J)y,({ \phi_\mathfrak{g^!}}\circ J)z]_{\mathfrak{g^!}}]_{\mathfrak{g^!}}-({ \phi_\mathfrak{g^!}}\circ J)[{\phi_{\mathfrak{g^!}}(x)},[y,({ \phi_\mathfrak{g^!}}\circ J)z]_{\mathfrak{g^!}}
	]_{\mathfrak{g^!}}).
	\end{align*}
	Similarly, we get
		\begin{align*}
	(\nabla_{\phi_{\mathfrak{g^!}}(y)} \nabla_x)z=&\frac{1}{4}([{\phi_{\mathfrak{g^!}}(y)},[x,z]_{\mathfrak{g^!}}]_{\mathfrak{g^!}}
	-[({ \phi_\mathfrak{g^!}}\circ J)({\phi_{\mathfrak{g^!}}(y))},[({ \phi_\mathfrak{g^!}}\circ J)x,z]_{\mathfrak{g^!}}]_{\mathfrak{g^!}}\\
	&+({ \phi_\mathfrak{g^!}}\circ J)[	({ \phi_\mathfrak{g^!}}\circ J)){\phi_{\mathfrak{g^!}}(y)}),
	[({ \phi_\mathfrak{g^!}}\circ J)x,({ \phi_\mathfrak{g^!}}\circ J)z]_{\mathfrak{g^!}}]_{\mathfrak{g^!}}-({ \phi_\mathfrak{g^!}}\circ J)[{\phi_{\mathfrak{g^!}}(y)},[x,({ \phi_\mathfrak{g^!}}\circ J)z]_{\mathfrak{g^!}}
	]_{\mathfrak{g^!}}).
	\end{align*}
	Subtracting two above equations and using the Hom-Jacobi identity, we obtain
		\begin{align*}
	(\nabla_{\phi_{\mathfrak{g^!}}(x)}\nabla_y&-\nabla_{\phi_{\mathfrak{g^!}}(y)} \nabla_x)z=\frac{1}{4}(-[{\phi_{\mathfrak{g^!}}(z)},[x,y]_{\mathfrak{g^!}}]_{\mathfrak{g^!}}
	+[{\phi_{\mathfrak{g^!}}(z)},[({ \phi_\mathfrak{g^!}}\circ J)x,({ \phi_\mathfrak{g^!}}\circ J)y]_{\mathfrak{g^!}}]_{\mathfrak{g^!}}\\
	&-({ \phi_\mathfrak{g^!}}\circ J)[	({ \phi_\mathfrak{g^!}}\circ J)){\phi_{\mathfrak{g^!}}(z)}),
	[({ \phi_\mathfrak{g^!}}\circ J)x,({ \phi_\mathfrak{g^!}}\circ J)y]_{\mathfrak{g^!}}]_{\mathfrak{g^!}}+({ \phi_\mathfrak{g^!}}\circ J)[{\phi_{\mathfrak{g^!}}(z)},[x,({ \phi_\mathfrak{g^!}}\circ J)y]_{\mathfrak{g^!}}
	]_{\mathfrak{g^!}}).
	\end{align*}
	From (\ref{L22}) and the last equation, we deduce the assertion. 
	\end{proof}
A Hom-Left-symmetric algebra is a Hom-algebra $(V, \cdot,  \phi_V )$ such that
\begin{equation}\label{L95}
ass_{\phi_V}(u,v,w)=ass_{\phi_V}(v,u,w),
\end{equation}
where
\begin{align*}
ass_{\phi_V}(u,v,w)=(u\cdot v)\cdot\phi_V(w)-\phi_V(u)\cdot(v\cdot w),
\end{align*}
for any $u,v,w\in V$.  This relation is equivalent to the vanishing of the curvature of $(V, \cdot,  \phi_V )$ with the {\it commutator} 
on $V$ is given by $[u,v]_V=u\cdot v-v\cdot u$.

	A Hom-algebra $(V, \cdot, \phi_V)$ is called
	Hom-Lie-admissible algebra
	if its commutator bracket
	satisfies the Hom-Jacobi identity.
	For a Hom-Lie-admissible algebra we have $\circlearrowleft_{u,v,w} \mathcal{K}(u,v)w=0$.
	If $(V,\cdot, \phi_V)$ is a Hom-Lie-admissible algebra, then $(V, [\cdot ,\cdot ]_V, \phi_V)$ is a Hom-Lie algebra, where $[\cdot ,\cdot ]_V$ is the commutator bracket.
\begin{proposition}\label{3.10}
	A Hom-Left-symmetric algebra is a Hom-Lie-admissible algebra.
\end{proposition}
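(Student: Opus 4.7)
The statement to prove is that Hom-left-symmetry implies the Hom-Jacobi identity for the commutator bracket $[u,v]_V=u\cdot v-v\cdot u$. My plan is to show this by direct manipulation of associators: first translate the defining identity into a useful reformulation, then verify the cyclic sum vanishes termwise.

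The key observation is that the left-symmetry condition $ass_{\phi_V}(u,v,w)=ass_{\phi_V}(v,u,w)$ can be rewritten as
\begin{equation*}
[u,v]_V\cdot\phi_V(w)=\phi_V(u)\cdot(v\cdot w)-\phi_V(v)\cdot(u\cdot w),
\end{equation*}
which is the precise Hom-analogue of the classical identity for left-symmetric (pre-Lie) algebras. This is the workhorse of the proof: it trades a product of a commutator with $\phi_V(w)$ for a difference of two ``associated'' products, moving $\phi_V$ onto the left factor.

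With this identity in hand, I would write out the Hom-Jacobi cyclic sum $\circlearrowleft_{u,v,w}[\phi_V(u),[v,w]_V]_V$ and split it into the two halves $\sum_{\mathrm{cyc}}\phi_V(u)\cdot[v,w]_V$ and $\sum_{\mathrm{cyc}}[v,w]_V\cdot\phi_V(u)$. The first half is straightforward: expand $[v,w]_V=v\cdot w-w\cdot v$ to get six terms. For the second half, apply the reformulated left-symmetry identity to each $[v,w]_V\cdot\phi_V(u)$, obtaining $\phi_V(v)\cdot(w\cdot u)-\phi_V(w)\cdot(v\cdot u)$. Relabelling indices cyclically one checks that the resulting six terms match, in a different order, exactly the six terms coming from the first half. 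Subtraction gives zero, so the commutator satisfies the Hom-Jacobi identity, which is precisely the definition of Hom-Lie-admissible.

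I do not anticipate a genuine obstacle here; the proof is essentially a careful bookkeeping of six terms and their cyclic re-indexing. The only point to take care of is ensuring that the reformulated identity is applied consistently (i.e.\ always bringing $\phi_V$ onto the first slot) so that the two halves become literally equal after relabelling, rather than equal up to a sign that would force additional commutator manipulations. Once the bookkeeping is performed, the antisymmetry of $[\cdot,\cdot]_V$ and the multiplicativity of $\phi_V$ do the rest automatically.
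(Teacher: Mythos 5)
Your argument is correct. The paper states this proposition without proof (it is a standard fact going back to the Hom-algebra literature), so there is nothing to compare against; your route --- rewriting Hom-left-symmetry as $[u,v]_V\cdot\phi_V(w)=\phi_V(u)\cdot(v\cdot w)-\phi_V(v)\cdot(u\cdot w)$ and checking that the cyclic sums $\sum_{\mathrm{cyc}}\phi_V(u)\cdot[v,w]_V$ and $\sum_{\mathrm{cyc}}[v,w]_V\cdot\phi_V(u)$ produce the same six terms --- is the standard direct verification and it goes through exactly as you describe. One small remark: the multiplicativity of $\phi_V$ is never actually used in this computation; the cancellation is purely formal bookkeeping of the six triple products.
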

\begin{theorem}
Any left-invariant holomorphic Norden Hom-Lie group $(G, \diamond, e_\Phi, \Phi, J, \langle\cdot, \cdot\rangle)$ is a flat Hom-Lie group (or,  holomorphic Norden Hom-Lie algebra 	$(\mathfrak{g^!}, [\cdot, \cdot]_{\mathfrak{g^!}}, \phi_{\mathfrak{g^!}},J,\langle\cdot,\cdot\rangle)$ carries  a Hom-Left-symmetric algebra) if $J$ is an abelian complex structure. 
\end{theorem}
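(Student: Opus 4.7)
The plan is to show directly that the curvature tensor $\mathcal{K}$ vanishes when $J$ is abelian; the Hom-Left-symmetric algebra statement will then be an immediate consequence, since for the product $u\cdot v := \nabla_u v$ a short computation using (i) of (\ref{twoo}) shows
\[
ass_{\phi_{\mathfrak{g}^!}}(u,v,w) - ass_{\phi_{\mathfrak{g}^!}}(v,u,w) = -\mathcal{K}(u,v)w,
\]
so flatness exactly encodes the Hom-Left-symmetric condition (\ref{L95}).

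First, I would invoke Proposition \ref{MA17} to replace the hypothesis ``holomorphic Norden'' by the equivalent ``K\"ahler-Norden'' condition $\nabla(\phi_{\mathfrak{g}^!}\circ J) = 0$, making Propositions \ref{L29}, \ref{L30}, and \ref{L31} available. Then Proposition \ref{L31} gives the cancellation $\nabla_{\phi_{\mathfrak{g}^!}(x)}\circ\nabla_y = \nabla_{\phi_{\mathfrak{g}^!}(y)}\circ\nabla_x$, so the curvature defined in (\ref{SS8}) collapses to
\[
\mathcal{K}(x,y)z = -\nabla_{[x,y]_{\mathfrak{g}^!}}\phi_{\mathfrak{g}^!}(z).
\]

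Next, I would exploit the abelian hypothesis (\ref{L22}). Substituting $y \to (\phi_{\mathfrak{g}^!}\circ J)y$ in $[(\phi_{\mathfrak{g}^!}\circ J)x,(\phi_{\mathfrak{g}^!}\circ J)y]_{\mathfrak{g}^!} = [x,y]_{\mathfrak{g}^!}$ and using $(\phi_{\mathfrak{g}^!}\circ J)^2 = -Id$, one obtains the anti-invariance
\[
[(\phi_{\mathfrak{g}^!}\circ J)x,y]_{\mathfrak{g}^!} + [x,(\phi_{\mathfrak{g}^!}\circ J)y]_{\mathfrak{g}^!} = 0,
\]
which transported through the reduced formula for $\mathcal{K}$ yields
\[
\mathcal{K}((\phi_{\mathfrak{g}^!}\circ J)x,y)z = -\mathcal{K}(x,(\phi_{\mathfrak{g}^!}\circ J)y)z.
\]
On the other hand, the previous proposition (purity of $\mathcal{K}$ in a holomorphic Norden Hom-Lie algebra) gives the opposite sign: $\mathcal{K}((\phi_{\mathfrak{g}^!}\circ J)x,y)z = \mathcal{K}(x,(\phi_{\mathfrak{g}^!}\circ J)y)z$. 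Adding these two relations kills the left-hand side, and since $\phi_{\mathfrak{g}^!}\circ J$ is a bijection, $\mathcal{K} \equiv 0$.

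The step I expect to carry the real content is the interplay between the abelian identity for the bracket and the purity of $\mathcal{K}$: each of these on its own is harmless, but together they force the curvature into a sign-conflict that can only be resolved by vanishing. Everything else is bookkeeping: Proposition \ref{MA17} is used at the start to convert hypothesis, Proposition \ref{L31} gives the key cancellation of quadratic connection terms, and the final passage from $\mathcal{K}=0$ to the Hom-Left-symmetric structure is a one-line computation using (i) of (\ref{twoo}) together with Proposition \ref{3.10} to confirm compatibility with the Hom-Lie bracket.
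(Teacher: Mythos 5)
Your proof is correct, and it reaches the conclusion by a route whose key ingredients differ from the paper's. Both arguments share the same skeleton: Proposition \ref{L31} collapses the curvature (\ref{SS8}) to $\mathcal{K}(x,y)z=-\nabla_{[x,y]_{\mathfrak{g^!}}}\phi_{\mathfrak{g^!}}(z)$, and the identity $ass_{\phi_{\mathfrak{g^!}}}(u,v,w)-ass_{\phi_{\mathfrak{g^!}}}(v,u,w)=-\mathcal{K}(u,v)w$ for $u\cdot v=\nabla_u v$ (via (i) of (\ref{twoo})) makes the Hom-Left-symmetric claim equivalent to flatness, exactly as the paper observes after (\ref{L95}). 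The divergence is in how the sign conflict is manufactured. The paper evaluates $\mathcal{K}((\phi_{\mathfrak{g^!}}\circ J)x,(\phi_{\mathfrak{g^!}}\circ J)y)z$ twice --- once through (\ref{L19}) and (\ref{L18}) (the latter supplied by Proposition \ref{L29} because $J$ is abelian), yielding $-\mathcal{K}(x,y)z$, and once through the reduced formula plus (\ref{L22}), yielding $+\mathcal{K}(x,y)z$. You instead evaluate $\mathcal{K}((\phi_{\mathfrak{g^!}}\circ J)x,y)z$ twice: the anti-invariance $[(\phi_{\mathfrak{g^!}}\circ J)x,y]_{\mathfrak{g^!}}=-[x,(\phi_{\mathfrak{g^!}}\circ J)y]_{\mathfrak{g^!}}$, a correct consequence of (\ref{L22}) and $(\phi_{\mathfrak{g^!}}\circ J)^2=-Id_{\mathfrak{g^!}}$, pushed through the reduced formula gives one sign, while the purity of $\mathcal{K}$ in its first two slots, combined with non-degeneracy of $\langle\cdot,\cdot\rangle$, gives the opposite sign. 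Your version avoids (\ref{L18}) and the second-covariant-derivative bookkeeping behind the paper's first evaluation (where the signs are delicate), and twists only one argument of $\mathcal{K}$; the price is that it leans on the purity Proposition, which the paper proves for arbitrary holomorphic Norden structures via the twin metric. Two small points to tighten: Proposition \ref{L31} is already stated for holomorphic Norden algebras with abelian $J$, so the opening appeal to Proposition \ref{MA17} is not needed for your main line; and for the Hom-Left-symmetric conclusion one should also record that $\phi_{\mathfrak{g^!}}$ is an algebra morphism for the product $u\cdot v=\nabla_u v$, i.e. $\phi_{\mathfrak{g^!}}(\nabla_u v)=\nabla_{\phi_{\mathfrak{g^!}}(u)}\phi_{\mathfrak{g^!}}(v)$ --- a verification the paper's own proof also leaves implicit.
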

\begin{proof}
	To prove, sufficient we show that $\mathcal{K}(x,y)z=0$, for any $x,y,z\in\mathfrak{g^!}$. Using (\ref{L19}) and (\ref{L18}), we conclude that 
	\begin{align}
\mathcal{K}(({ \phi_\mathfrak{g^!}}\circ J)x,({ \phi_\mathfrak{g^!}}\circ J)y)z=-\mathcal{K}(x,y)z.
	\end{align}
	On the other hand, Proposition \ref{L31} implies 
	\begin{align*}
\mathcal{K}(({ \phi_\mathfrak{g^!}}\circ J)x,({ \phi_\mathfrak{g^!}}\circ J)y)z=-\nabla_{[({ \phi_\mathfrak{g^!}}\circ J)x,({ \phi_\mathfrak{g^!}}\circ J)y]_{\mathfrak{g^!}}} \phi_{\mathfrak{g^!}}=-\nabla_{[x,y]_{\mathfrak{g^!}}} \phi_{\mathfrak{g^!}}=\mathcal{K}(x,y)z.
	\end{align*}
	From the above equations we conclude the assertion.
	\end{proof}
\begin{example}
	We consider a holomorphic Norden Hom-Lie algebra 	$(\mathfrak{g}, [\cdot, \cdot]_\mathfrak{g}, \phi_{\mathfrak{g}},J,\langle \cdot,\cdot\rangle)$ in Example (\ref{L94}). For 
the Hom-Levi-Civita connection $\nabla$, we get 
\begin{align*}
\nabla_{\nabla_{e_i} e_j}\phi_{\mathfrak{g}}(e_k)-\nabla_{\phi_{\mathfrak{g}}(e_i)}\nabla_{e_j} e_k=0=\nabla_{\nabla_{e_j} e_i}\phi_{\mathfrak{g}}(e_k)-\nabla_{\phi_{\mathfrak{g}}(e_j)}\nabla_{e_i} e_k, \ \ \ \forall i,j,k=1,\cdots,6,
\end{align*}
 expect 
 \begin{align*}
 &\nabla_{\nabla_{e_5} e_5}\phi_{\mathfrak{g}}(e_5)-\nabla_{\phi_{\mathfrak{g}}(e_5)}\nabla_{e_5} e_5=-\frac{1}{2}e_1=\nabla_{\nabla_{e_5} e_5}\phi_{\mathfrak{g}}(e_5)-\nabla_{\phi_{\mathfrak{g}}(e_5)}\nabla_{e_5} e_5, \\
 &\nabla_{\nabla_{e_5} e_5}\phi_{\mathfrak{g}}(e_6)-\nabla_{\phi_{\mathfrak{g}}(e_5)}\nabla_{e_5} e_6=\frac{1}{2}e_2=\nabla_{\nabla_{e_5} e_5}\phi_{\mathfrak{g}}(e_6)-\nabla_{\phi_{\mathfrak{g}}(e_5)}\nabla_{e_5} e_6, \\
 &\nabla_{\nabla_{e_5} e_6}\phi_{\mathfrak{g}}(e_5)-\nabla_{\phi_{\mathfrak{g}}(e_5)}\nabla_{e_6} e_5=-\frac{1}{2}e_2=\nabla_{\nabla_{e_6} e_5}\phi_{\mathfrak{g}}(e_5)-\nabla_{\phi_{\mathfrak{g}}(e_6)}\nabla_{e_5} e_5,\\
 &\nabla_{\nabla_{e_5} e_6}\phi_{\mathfrak{g}}(e_6)-\nabla_{\phi_{\mathfrak{g}}(e_5)}\nabla_{e_6} e_6=-\frac{1}{2}e_1=\nabla_{\nabla_{e_6} e_5}\phi_{\mathfrak{g}}(e_6)-\nabla_{\phi_{\mathfrak{g}}(e_6)}\nabla_{e_5} e_6,\\
 &\nabla_{\nabla_{e_6} e_6}\phi_{\mathfrak{g}}(e_5)-\nabla_{\phi_{\mathfrak{g}}(e_6)}\nabla_{e_6} e_5=\frac{1}{2}e_1=\nabla_{\nabla_{e_6} e_6}\phi_{\mathfrak{g}}(e_5)-\nabla_{\phi_{\mathfrak{g}}(e_6)}\nabla_{e_6} e_5,\\
  &\nabla_{\nabla_{e_6} e_6}\phi_{\mathfrak{g}}(e_6)-\nabla_{\phi_{\mathfrak{g}}(e_6)}\nabla_{e_6} e_6=-\frac{1}{2}e_2=\nabla_{\nabla_{e_6} e_6}\phi_{\mathfrak{g}}(e_6)-\nabla_{\phi_{\mathfrak{g}}(e_6)}\nabla_{e_6} e_6.
 %  &(e_5\cdot e_5)\cdot\phi_{\mathfrak{g}}(e_5)-\phi_{\mathfrak{g}}(e_5)\cdot(e_5\cdot e_5)=\frac{1}{2}e_2=(e_5\cdot e_5)\cdot\phi_{\mathfrak{g}}(e_5)-\phi_{\mathfrak{g}}(e_5)\cdot(e_5\cdot e_5),\\
% &(e_5\cdot e_6)\cdot\phi_{\mathfrak{g}}(e_5)-\phi_{\mathfrak{g}}(e_5)\cdot(e_6\cdot e_5)=-\frac{1}{2}e_2=(e_6\cdot e_5)\cdot\phi_{\mathfrak{g}}(e_5)-\phi_{\mathfrak{g}}(e_6)\cdot(e_5\cdot e_5),\\
% &
 %(e_5\cdot e_6)\cdot\phi_{\mathfrak{g}}(e_6)-\phi_{\mathfrak{g}}(e_5)\cdot(e_6\cdot e_6)=-\frac{1}{2}e_1=(e_6\cdot e_5)\cdot\phi_{\mathfrak{g}}(e_6)-\phi_{\mathfrak{g}}(e_6)\cdot(e_5\cdot e_6),\\
% &
 %(e_6\cdot e_6)\cdot\phi_{\mathfrak{g}}(e_6)-\phi_{\mathfrak{g}}(e_6)\cdot(e_6\cdot e_6)=\frac{1}{2}e_1=(e_6\cdot e_6)\cdot\phi_{\mathfrak{g}}(e_6)-\phi_{\mathfrak{g}}(e_6)\cdot(e_6\cdot e_6).
 \end{align*}
 The above equations imply (\ref{L95}) holds. Therefore the holomorphic Norden Hom-Lie algebra 	$(\mathfrak{g^!}, [\cdot, \cdot], \phi_{\mathfrak{g^!}},J,\langle\cdot,\cdot\rangle)$ has  a Hom-Left-symmetric algebra structure.
\end{example}
%%%%%%%%%%%%%%%%%%%%%%%%%%%%%%%%%%%%%%%%%%%%%%%%%%%%%%%%%%%%%%%%%%%%%%%%%%%%%%%%%%%%%
 %%%%%%%%%%%%%%%%%%%%%%%%%%%%%%%%%%%%%%%%%%%%%%%%%%%%%%%%
%%%%%%%%%%%%%%%%%%%%%%%%%%%%%%%%%%%%%%%%%%%%%%%%%%%%

%*******************************************************************************************
\bigskip \addcontentsline{toc}{section}{References}

%%%%%%%%%%%%%%%%%%%%%%%%%%%%%%%%%%%%%%%%%%
%%%%%%%%%%%%%%%%%%%%%%%%%%%%%%%%%%%%%%%

\end{document}